\documentclass[a4paper,11pt,reqno]{amsart}
%
%
\usepackage{amsmath}
\usepackage{amsfonts}
\usepackage{accents}
\usepackage{bbm}
\usepackage{amssymb}
\usepackage{graphicx}
\usepackage{geometry}
\usepackage{wrapfig}
\usepackage{ textcomp }
\usepackage[small,nohug,heads=vee]{diagrams} 
\geometry{a4paper,left=37mm,right=28mm, top=3cm, bottom=4cm} 
%
\theoremstyle{plain}
\newtheorem{theorem}{Theorem}
\numberwithin{theorem}{section}

\newtheorem{claim}[theorem]{Claim}

\newtheorem{question}[theorem]{Question}

\newtheorem{definition}[theorem]{Definition}
\newtheorem{example}[theorem]{Example}

\newtheorem{lemma}[theorem]{Lemma}

\newtheorem{proposition}[theorem]{Proposition}
\newtheorem{remark}[theorem]{Remark}

\newtheorem*{theoremX}{Theorem}
\newtheorem*{exampleX}{Example}

\newcommand\myover[2]{\genfrac{}{}{0pt}{}{#1}{#2}}

\renewcommand{\H}{\mathrm{H}}
\newcommand{\DG}{{DG}}
\renewcommand{\mod}{\mathrm{mod}}
\newcommand{\comod}{\mathrm{comod}}
\newcommand{\Aut}{\mathrm{Aut}}
\newcommand{\Rep}{\mathrm{Rep}}
\newcommand{\Int}{\mathrm{Int}}
\newcommand{\Bigal}{\mathrm{Bigal}}
\newcommand{\BrPic}{\mathrm{BrPic}}
\newcommand{\Gal}{\mathrm{Gal}}
\newcommand{\End}{\mathrm{End}}

\newcommand{\Cent}{\mathrm{Cent}}
\newcommand{\Vect}{\mathrm{Vect}}
\newcommand{\Out}{\mathrm{Out}}
\newcommand{\Inn}{\mathrm{Inn}}
\newcommand{\Reg}{\mathrm{Reg}}
\newcommand{\Hom}{\mathrm{Hom}}
\newcommand{\id}{\mathrm{id}}
\newcommand{\im}{\mathrm{im}}

\newcommand{\DD}{\mathbb{D}}
\renewcommand{\SS}{\mathbb{S}}

\newcommand{\F}{\mathbb{F}}

\newcommand{\Z}{\mathrm{Z}}
\newcommand{\Ind}{\mathrm{Ind}}
\newcommand{\B}{\mathrm{B}}
\renewcommand{\P}{\mathrm{P}}
\newcommand{\GL}{\mathrm{GL}}
\newcommand{\Sp}{\mathrm{Sp}}
\newcommand{\SO}{\mathrm{SO}}
\renewcommand{\O}{\mathrm{O}}
\renewcommand{\d}{\mathrm{d}}
\newcommand{\ZZ}{\mathbb{Z}}
\newcommand{\CC}{\mathbb{C}}
\newcommand{\LL}{\mathcal{L}}
\newcommand{\nat}{\mathbb{N}}

\newcommand{\cat}{\mathcal{C}}

\newcommand{\ocat}{\mathcal{O}}

\newcommand{\mcat}{\mathcal{M}}
\newcommand{\bcat}{\mathcal{B}}

\newcommand{\md}{\text{-}}
\newcommand{\mref}{\mathrm{ref}}
\newcommand{\triv}{\mathrm{triv}}
\newcommand{\sgn}{\mathrm{sgn}}

\newcommand{\cB}{\mathcal{B}}
\newcommand{\cE}{\mathcal{E}}
\newcommand{\cV}{\mathcal{V}}
\newcommand{\cR}{\mathcal{R}}

\newcommand{\cBL}{\mathcal{B}_L}
\newcommand{\cEL}{\mathcal{E}_L}
\newcommand{\cVL}{\mathcal{V}_L}
\newcommand{\cRL}{\mathcal{R}_L}

\newcommand{\cBTildeL}{\widetilde{\mathcal{B}}_L}
\newcommand{\cETildeL}{\widetilde{\mathcal{E}}_L}
\newcommand{\cVTildeL}{\widetilde{\mathcal{V}}_L}
\newcommand{\cRTildeL}{\widetilde{\mathcal{R}}_L}

\newcommand{\cBUnderL}{\underaccent{\bar}{\mathcal{B}}_L}
\newcommand{\cEUnderL}{\underaccent{\bar}{\mathcal{E}}_L}
\newcommand{\cVUnderL}{\underaccent{\bar}{\mathcal{V}}_L}
\newcommand{\cRUnderL}{\underaccent{\bar}{\mathcal{R}}_L}

\newcommand{\AutTilde}{\widetilde{\mathrm{Aut}}}
\newcommand{\AutUnder}{\underline{\mathrm{Aut}}}

\newcommand{\ito}{\stackrel{\simeq}{\to}}
\setlength{\parindent}{0pt} 

\newcommand{\hamburger}[4] 
{
  \thispagestyle{empty}
  \vspace*{-2cm}
  \begin{flushright}
    ZMP-HH #2 \\
    Hamburger Beitr\"age zur Mathematik Nr. #3 \\
    #4 \\
  \end{flushright}
  \vspace{0.5cm}
  \begin{center}
    \Large \bf
    #1
  \end{center}
  \vspace{0.5cm}
  \begin{center}        
    Simon Lentner, Jan Priel \\
    Fachbereich Mathematik, Universit\"at Hamburg \\
    Bereich Algebra und Zahlentheorie \\
    Bundesstra\ss e 55, D-20146 Hamburg \\
  \end{center}
  \vspace{0.5cm}
}

\begin{document}

\hamburger{A decomposition of the Brauer-Picard group of the representation category of a finite group}{15 / 9}{539}{June 2015}
\thispagestyle{empty}
\enlargethispage{1cm}

\begin{abstract}
We present an approach of calculating the group of braided autoequivalences of
the category of representations of the Drinfeld double of a finite dimensional Hopf algebra $H$ and
thus the Brauer-Picard group of $H\md\mod$. We consider two natural subgroups
and a subset as candidates for generators. In this article $H$ is the group algebra of a finite group $G$. As our main result we prove that any element of the Brauer-Picard group, fulfilling an additional cohomological condition, decomposes into an ordered product of our candidates. \\
For elementary abelian groups $G$ our decomposition reduces to the Bruhat
decomposition of the Brauer-Picard group, which is in this case a Lie group over
a finite field.  
Our results are motivated by and have applications to symmetries and defects in
$3d$-TQFT and group extensions of fusion categories.
\end{abstract}

\makeatletter
\@setabstract
\makeatother

\tableofcontents
\newpage

\section{Introduction}

For a finite tensor category $\cat$ the \emph{Brauer-Picard group}
$\BrPic(\cat)$ is defined as the group of equivalence classes of invertible
$\cat$-bimodule categories. This group is an important invariant of the tensor
category $\cat$ and appears at several essential places in representation
theory, for example in the classification problem of $G$-extensions of fusion
categories, see \cite{ENO10}. In mathematical physics (bi-)module categories
appear as boundary conditions and defects in $3d$-TQFT, in particular the
Brauer-Picard group is a symmetry group of such theories, see \cite{FSV13},
\cite{FPSV15}. \\
An important structural insight is the following result proven in Thm. 1.1 \cite{ENO10} for $\cat$ a fusion category and in Thm. 4.1 \cite{DN12} for $\cat$ a finite tensor category (hence not necessarily semisimple):
There is a group isomorphism from the Brauer-Picard group to the group of equivalence classes of \emph{braided autoequivalences}
of the \emph{Drinfeld center} $Z(\cat)$:
\begin{align}\label{ENO} \BrPic(\cat) \cong \Aut_{br}(Z(\cat)) \end{align} 

In the case $\cat=\Rep(G)$ of finite dimensional complex representations of a finite group $G$
(respectively $\cat=\Vect_G$ which has the same Drinfeld center) computing the Brauer-Picard group is already an interesting and non-trivial task. This group appears as the symmetry group of (extended) Dijkgraaf-Witten theories with structure group $G$. See \cite{DW90} for the original work on Chern-Simons with finite gauge group $G$ and see \cite{FQ93}, \cite{Mo13} for the extended case. In \cite{O03} the authors have obtained a parametrization of $\Vect_G$-bimodule
categories in terms of certain subgroups $L \subset G\times G^{op}$ and
$2$-cocycles $\mu$ on $L$ and \cite{Dav10} has determined a condition when such pairs correspond to invertible bimodule categories. However, the necessary calculations to determine
$\BrPic(\cat)$ seem to be notoriously hard and the above approach gives little information about the
group structure. In \cite{NR14} the authors use the isomorphism to $\Aut_{br}(Z(\cat))$ in order
to compute the Brauer-Picard group for several groups $G$ using the following strategy: They enumerate all
subcategories $\LL\subset Z(\cat)$ that are braided equivalent to $\cat=\Rep(G)$, then they prove that
$\Aut_{br}(Z(\cat))$ acts transitively on this set. Finally, they determine the stabilizer of
the standard subcategory $\cat \subset Z(\cat)$ with trivial braiding. 

For $G$ abelian, the second author's joint paper \cite{FPSV15} determines a set of generators of the
Brauer-Picard group and provides a field theoretic interpretation of the isomorphism $\BrPic(\cat) \cong \Aut_{br}(Z(\cat))$ in terms of 3d-Dijkgraaf-Witten theory with defects. Results for Brauer-Picard groups of other categories $\cat$ include representations of the Taft algebra in \cite{FMM14} and of supergroups in \cite{Mom12},\cite{BN14}. \\ 
An alternative characterization of elements in $\Aut_{br}(Z(H\md\mod))$ in terms of quantum commutative Bigalois objects was given in \cite{ZZ13}. \\

\enlargethispage{0.2cm}

In this article we propose an approach to calculate $\BrPic(\cat)$
for $\cat=H\md\mod$, the category of finite-dimensional representations of a
finite-dimensional Hopf algebra $H$. 
Let $\cat$ be any tensor category. Then there exists a well-known group homomorphism: 
$$ \Ind_\cat:\;\Aut_{mon}(\cat)\to \BrPic(\cat) \cong \Aut_{br}(Z(\cat)) $$
given by assigning to a monoidal automorphism $\Psi \in \Aut_{mon}(\cat)$ the invertible $\cat$-bimodule category $_{\Psi}\cat_\cat$, where the left $\cat$-module structure is given by precomposing with $\Psi$; then we use the isomorphism (\ref{ENO}) mentioned above. The image of this map gives us a natural subgroup of the Brauer-Picard group. If we can choose another category $\cat'$ and a braided equivalence $F: Z(\cat') \stackrel{\sim}{\to} Z(\cat)$, then we get a different induction and a new subgroup of $\Aut_{br}(Z(\cat))$:
$$\Ind_{\cat',F}:\;\Aut_{mon}(\cat')\to \BrPic(\cat') \cong \Aut_{br}(Z(\cat')) \stackrel{F}{\cong} \Aut_{br}(Z(\cat))$$
Consider a finite dimensional Hopf algebra $H$ and let $\cat=H\md\mod$ be the category of finite dimensional $H$-modules. Then $Z(H\md\mod)=DH\md\mod=H^*\bowtie H\md\mod$ and we have a canonical choice $\cat'=H^*\md\mod$ and a canonical isomorphism of Hopf algebras $DH \ito D(H^*)$ (see Thm. 3 in \cite{Rad93}), that gives us a canonical braided equivalence $D(H^*)\md\mod \cong DH\md\mod$. Hence, we have two canonical subgroups of $\Aut_{br}(DH\md\mod)$, namely $\im(\Ind_{H \md \mod)})$ and $\im(\Ind_{H^*\md\mod})$. \\

Let us introduce an additional set $\cR\subset \Aut_{br}(DH\md\mod)$. For each decomposition of $H$ into a Radford biproduct $H=A\ltimes K$ (see Sect. 10.6 \cite{Mon93}), where $A$ a Hopf algebra and $K$ a Hopf algebra in the category $Z(A\md\mod)$, for a choice of a Hopf algebra $L$ in $Z(A\md\mod)$ and a non-degenerate Hopf algebra pairing $\langle \cdot,\cdot\rangle:K \otimes L \to k$ in the category $Z(A\md\mod)$, we can construct a canonical braided equivalence by Thm 3.20 in \cite{BLS15}: 
$$\Omega^{\langle \cdot , \cdot  \rangle}:Z(A\ltimes K\md\mod) \ito Z(A \ltimes L\md\mod)$$ 
In the special case of $L:=K$, the functor $\Omega^{\langle \cdot , \cdot  \rangle}$ is a braided autoequivalence of $Z(A\ltimes K\md\mod)=DH\md\mod$. In this case, we identify $\langle \cdot , \cdot \rangle$ canonically with an isomorphism of Hopf algebras in $Z(A\md\mod)$ that we denote by $\delta:K \ito K^*$. We call the triple $(A,K,\delta)$ a partial dualization datum and $r_{A,K,\delta}:=\Omega^{\langle \cdot ,\cdot\rangle} \in \Aut_{br}(DH\md\mod)$ a partial dualization of $H$ on $K$. \\
In the case of a group algebra $H=kG$ of a finite group $G$, we obtain for each decomposition of $G$ as a semi-direct product $G=Q \ltimes N$ a decomposition of $kG$ as a Radford biproduct $kG= kQ \ltimes kN$, where $N$ is a normal subgroup of $G$. $kN$ is a Hopf algebra in $Z(kQ\md\mod)$, where $kQ$ acts on $kN$ by conjugation and where the $kQ$-coaction on $kN$ is trivial. The existence of a Hopf isomorphism $\delta:kN \ito k^N$ in the category $Z(kQ\md\mod)$ forces $N$ to be abelian and $kN$ to be a self-dual $kQ$-module. Thus, for each partial dualization datum $(Q,N,\delta)$, we obtain an element $r_{Q,N,\delta} \in \Aut_{br}(DG\md\mod)$. We denote the set of partial dualizations by $\cR$.

\begin{question}\label{q_decomposition}
	Do the subgroups $\im(\Ind_{H\md\mod})$, $\im(\Ind_{H^*\md\mod})$ together with partial dualizations $\cR$ generate the group $\Aut_{br}(DH\md\mod)$? Does $\Aut_{br}(DH\md\mod)$ decompose as an ordered product of $\im(\Ind_{H\md\mod})$, $\im(\Ind_{H^*\md\mod})$ and $\cR$? 
\end{question}

\noindent
Natural questions for applications are: 

\begin{question}
      The elements of $\im(\Ind_\cat),\im(\Ind_{\cat'})$ are by definition
      realized as different bimodule category structures of the abelian categories
			$\cat$ and $\cat'$ respectively. What are the bimodule categories associated to the partial
      dualizations?
\end{question}

\begin{question}
      What are the three types of group extensions of the fusion category $\cat$
      associated by the isomorphism in \cite{ENO10} to the two subgroups and to partial dualizations?  
\end{question}

A decomposition as described in Question \ref{q_decomposition} would give us effective control over the Brauer-Picard group $\BrPic(\cat)$ through explicit and natural generators. Additionally, these generators have an interesting field theoretic interpretation (see next page). \\

In the present article we consider the case $H=kG$ with $G$ a finite group, hence $\cat=\Vect_G$, $\cat'=\Rep(G)$; in this case the subgroups $\Aut_{mon}(\Vect_G)$ and to a lesser extend $\Aut_{mon}(\Rep(G))$ are well-known. As a main result, we prove that the decomposition described in Question \ref{q_decomposition} holds for the subgroup of elements in $\BrPic(\Vect_G)$,  
which fulfill the additional cohomological property of \emph{laziness}. This condition is automatically fulfilled in the case that $G$ is abelian. Further, for some known examples, we check that the decomposition holds also for the full Brauer-Picard group (see Section \ref{sec_examples}). One important example is the following 

\begin{exampleX}[Sec. \ref{sec_Fp_AutBr}]
Let $G  = \ZZ_p^n$ with $p$ a prime number. Fixing an isomorphism $\ZZ_p \simeq \widehat{\ZZ}_p$, we have the following group isomorphism: $$\BrPic(\Rep(\ZZ_p^n)) \simeq \O_{2n}(\F_p,q)$$ where 
$\O_{2n}(\F_p,q)$ is the group of invertible matrices in $\F_p^{2n\times 2n}$ that are invariant under $q:\F_p \times \F_p \to \F_p;(k_1,...,k_n,l_1,...,l_n) \mapsto \sum_{i=1}^{n}k_il_i$. 
In this case, the images of $\Ind_\cat$ resp.
$\Ind_{\cat'}$ in these Lie groups are lower resp. upper block triangular matrices, intersecting in the subgroup $\Out(\ZZ_p^n) \simeq \GL_n(\F_p)$. The partial
dualizations are Weyl group elements. Our result gives an analogue of the Bruhat decomposition of the Lie
group $\O_{2n}(\F_p,q)$. There are $n+1$ double cosets of the parabolic Weyl group $\SS_n$,
accounting for the $n+1$ non-isomorphic partial dualizations on subgroups $\ZZ_p^k$ for $k=0,...,n$. 
\end{exampleX}

Our general decomposition is modeled after this example and retains roughly what remains of the Bruhat decomposition for a Lie group over a ring (say in the case $G=\ZZ_k^n$ with $k$ not prime), but it is not a Bruhat decomposition in general. Moreover, for $G$ non-abelian the subgroups $\Ind_{\cat},\Ind_{\cat'}$ in $\Aut_{br}(\DG\md\mod)$ are not isomorphic. Additionally, we exhibit a rare class of braided autoequivalences acting as the identity functor on objects and morphisms but having a non-trivial monoidal structure.\\

From a mathematical physics perspective these subgroups arise as follows: A Dijkgraaf-Witten theory has as input data a finite group $G$ and a $3$-cocycle $\omega$ on $G$. It is a topological gauge theory with principal $G$-bundles on a manifold $M$ as classical fields. Since for a finite group $G$ all $G$-bundles are flat, they already form the configuration space. The $\omega$ corresponds to a Lagrangian functional (in our article $\omega$ is trivial). We are interested in the symmetry group of the quantized theory, which is per definition the group of invertible defects and hence $\BrPic(\Vect_G)$.  

Based on this gauge theoretic view, it is natural to expect automorphisms of $G$ to be a symmetry of the classical and the quantized theory. Indeed, $\cV=\Out(G)$ is a subgroup of $\Aut_{br}(\DG\md\mod)$ and since it already exists at the classical level, we call this a classical symmetry (see Proposition \ref{vcat}). It is both, a subgroup of $\im(\Ind_{\Vect_G})$ and a subgroup of $\im(\Ind_{\Rep(G)})$. More symmetries can be obtained by the following idea: equivalence classes of gauge fields are principal $G$-bundles and thus are in bijection with homotopy classes of maps from $M$ to $BG$, the classifying space of $G$. One may view a Dijkgraaf-Witten theory based on $(G,\omega)$ as a $\sigma$-model with target space $BG$. Then the $3$-cocycle $\omega$ can be viewed as a background field on the target space, and the choice of $\omega$ corresponds to the choice of a $2$-gerbe. Even for trivial $\omega$ we obtain a non-trivial symmetry group of this $2$-gerbe and hence an additional subgroup of automorphisms of the theory. These symmetries are again classical symmetries, the so-called \emph{background field symmetries} $\H^2(G,k^\times)$. Our subgroup $\im(\Ind_{\Vect_G})=\cB\rtimes \cV$ where $\cB \cong \H^2(G,k^\times)$ is therefore the semidirect product of the two classical symmetry groups from above (see Proposition \ref{lm_BField}). An interesting implication of our result is that in order to obtain the full automorphism group one considers a second $\sigma$-model (the 'dual $\sigma$-model') associated to $\cat'=\Rep(G)$ which however leads to the same quantum field theory. This dual $\sigma$-model induces another subgroup of background field symmetries $\cE$ which is a subgroup of $\im(\Ind_{\Rep(G)})$ (see Proposition \ref{efield}). However, the group $\im(\Ind_{\Rep(G)})$ is \emph{not} a semidirect product of $\cE$ and $\cV$ in general.

The elements in $\cR$ have the field theoretic interpretation of so-called \emph{partial em-dualities} (electric-magnetic-dualities). In so-called quantum double models (see e.g. \cite{BCKA13} and \cite{KaW07}), irreducible representations of $\DG$ have the interpretation of quasi-particle charges (anyon charges). These are parametrized by pairs $([g],\chi)$, where $[g]$ is a conjugacy class in $G$ and $\chi$ an irreducible representation of the centralizer $\Cent(g)$. The irreducibles of the form $([g],1)$ are called \emph{magnetic} and the ones of the form $([1],\chi)$ are called \emph{electric}. Partial dualizations $\cR$ are symmetries of the quantized theory that exchange magnetic and electric charges (see Proposition \ref{pcat} and Section \ref{sec_nonlazyReflection}). These are only present at the quantum level, hence we call them quantum symmetries. \\ 
For the lazy Brauer-Picard group, which incorporates the abelian case as a special case, it is enough to dualize on direct abelian factors of $G$. A partial dualization is then induced by a Hopf automorphisms of $DG$ (Proposition \ref{pcat}). For the general Brauer-Picard group, we need to consider semi-direct abelian factors of $G$. Partial dualizations are then induced by algebra isomorphisms that are not necessarily Hopf (see Section \ref{sec_nonlazyReflection}). \\ 

\noindent
We now outline the structure of this article and give details on our methods and results:\\

In Section 2, we give some preliminaries: We recall the definition the Drinfeld double $\DG$ and list the irreducible modules $\ocat_g^\chi$ to be able to express our result also in this explicit basis. Further, we give some basic facts about Hopf Bigalois objects, these are certain $H^*$-bicomodule algebras $A$ such that the functor $A \otimes_H \bullet$ gives an element in $\Aut_{mon}(H\md\mod)$ - all monoidal equivalences of $H\md\mod$ arise in this way for some Bigalois object. A special class of Bigalois objects is given by \emph{lazy} Bigalois objects: These are described by pairs $(\phi,\sigma)$ where $\phi \in \Aut_{Hopf}(H\md\mod)$ describes the action of $A \otimes_H \bullet$ on $H$-modules and where $\sigma \in \Z^2_L(H^*)$, a lazy $2$-cocycle, describes a monoidal structure on the functor $A \otimes_H \bullet$. \\

In Section 3, we recall the decomposition of the group of Hopf algebra automorphisms $\Aut_{Hopf}(\DG)$ we have obtained in \cite{LP15} into certain subgroups. These subgroups can be seen as upper triangular matrices $E$, lower triangular matrices $B$, block diagonal matrices $V \cong \Aut(G)$ and $V_c \cong \Aut_c(G)$ and so called \emph{reflections} on direct abelian factors of $G$. \\

In Section 4, we construct certain \emph{braided} lazy autoequivalences of $\DG\md\mod$. For this we first consider lazy monoidal autoequivalences $\Aut_{mon}(\DG\md\mod)$. These can be parametrized by pairs 
$$(\phi,\sigma) \in \Aut_{Hopf}(\DG) \ltimes \Z^2_L(\DG^*)$$
where a pair $(\phi,\sigma)$ corresponds to the functor $(F_\phi,J^{\sigma})$
acting on objects via $\phi$ together with a monoidal structure $J^\sigma$ determined
by the lazy $2$-cocycle $\sigma$. A $2$-cocycle on a Hopf algebra $H$ is lazy if the Doi twist with $\sigma$ gives again the same Hopf algebra structure. We note that two different pairs may of course give functors that are monoidal equivalent. For example, they might differ by a pair consisting of an inner Hopf automorphism and an exact $2$-cocycle. Additionally, internal Hopf automorphisms produce trivial monoidal autoequivalences. This leads us to the following tedious notation:
Let $\underline{\Aut}_{mon}(\DG \md \mod)$ be the category of monoidal autoequivalences and $\underline{\Aut}_{mon,L}(\DG \md \mod) := \Aut_{Hopf}(\DG) \ltimes \Z^2_L(\DG^*)$. Further, let $\widetilde{\Aut}_{mon,L}(\DG \md\mod) := \Out_{Hopf}(\DG) \ltimes \H^2_L(\DG^*)$ be the quotient by lazy coboundaries and inner automorphisms. Let $\Aut_{mon}(\DG \md \mod)$, $\Aut_{mon,L}(\DG \md \mod)$ be the groups of \emph{equivalence classes} of monoidal autoequivalences respectively lazy monoidal autoequivalences. Note that $\widetilde{\Aut}_{mon,L}(\DG \md\mod) \twoheadrightarrow \Aut_{mon,L}(\DG \md \mod)$ has a non-trivial kernel (see Sequence (\ref{monlazy})).
Accordingly, we use the notation $\Aut_{br}(\DG\md\mod)$, $\widetilde{\Aut}_{br,L}(\DG \md\mod)$ etc. for the corresponding subgroups of braided autoequivalences. For $\mathcal{\underline{U}} \subset \underline{\Aut}_{br,L}(\DG\md\mod)$ we denote the images by $\widetilde{\mathcal{U}} \subset \widetilde{\Aut}_{br,L}(\DG\md\mod)$ and $\mathcal{U} \subset \Aut_{br,L}(\DG\md\mod)$. \\  



The goal of Section 4, is to construct certain subgroups of $\Aut_{br,L}(\DG\md\mod)$. Recall the subgroups $V$, $B$, $E$ as well as the subset $R$ in $\Aut_{Hopf}(\DG)$ from Section 3. We observe that for each of the subsets any suitable element $\phi$ can be combined with a specific $2$-cocycle $\sigma$ in $\H^2(G,k^\times)$ resp. $\H^2_L(k^G)$ resp. a pairing such that the pair $(\phi,\sigma)$ becomes braided. We thus define $\cVUnderL$, $\cBUnderL$, $\cEUnderL$, $\cRUnderL$  $\subset \underline{\Aut}_{br,L}(\DG \md \mod)$ in Propositions \ref{vcat}, \ref{lm_BField}, \ref{efield} and \ref{pcat} as follows: \\ 
$\bullet$ The subgroup $\cVUnderL\cong \Aut(G)$ consists of pairs $(\phi,1)$ i.e. functors induced by group automorphisms $\phi \in \Aut(G)$ on objects and a trivial monoidal structure. The images in the quotients $\underline{\Aut}_{br,L}(\DG\md\mod)$ resp. $\Aut_{br,L}(\DG\md\mod)$ are $\cVTildeL \cong \cVL \cong\Out(G)$. \\
$\bullet$ The subgroup $\cEUnderL$ consists of suitable elements $\phi\in E$, each combined with a specific cocycle $\sigma\in \Z^2(k^G)$. 
More precisely, they are constructed in a way that makes the group homomorphism $\cEUnderL\to \Aut_{Hopf}(\DG)$ given by $(\phi,\sigma)\mapsto \phi$ into an isomorphism on
$$\cETildeL\stackrel{\sim}{\longrightarrow} Z(G)\wedge Z(G)\subset E$$
The image $\cEL \subset \Aut_{br,L}(\DG\md\mod)$ corresponds to lazy elements in the image
$$\Ind_{\Rep(G)}:\;\Aut_{mon}(\Rep(G))\to \BrPic(\Rep(G))$$
(up to $\cVL$). Lazy implies here that they arise from $\Aut_{mon}(\Rep(Z(G)))$. \\
$\bullet$ The subgroup $\cBUnderL$ is constructed similarly as $\cEUnderL$. We combine an element
$\phi \in B$ with a special cocycle $\sigma \in \Z^2(G,k^\times)$. Then the image $\cBL \subset \Aut_{br}(\DG\md\mod)$ corresponds
to lazy elements in the image of 
$$\Ind_{\Vect_G}:\;\Aut_{mon}(\Vect_G)\to \BrPic(\Rep(G))$$
(up to $\cVL$). Lazy implies here that they arise from $\Aut_{mon}(\Vect_{G_{ab}})$. In this case $(\phi,\sigma) \mapsto \phi$ is a surjective group homomorphism
\begin{align*}
\cBTildeL \twoheadrightarrow \hat{G}_{ab}\wedge \hat{G}_{ab}\subset B
\end{align*}
which is \emph{not} injective in general. Rather $\cBTildeL$ is a central extension of $\hat{G}_{ab}\wedge \hat{G}_{ab}$ by conjugation invariant \emph{distinguished cohomology classes} of $G$ (c.f. \cite{Higgs87}).  

For $G$ non-abelian, we have hence an interesting class of braided autoequivalences in $\Aut_{br}(\DG\md\mod)$, which are trivial on objects ($\phi=1$) but have nontrivial monoidal structure $J^\sigma$. These seem to be rather rare, in fact the first nontrivial example arises for $G$ a certain non-abelian group of order $p^{9}$, see Example \ref{exm_p9_B}. \\
$\bullet$ The reflections $\cRUnderL=\cRTildeL=\cR$ arise as follows: For every decomposition $G=H\times C$ and Hopf isomorphism $\delta:kC \simeq k^C$, $C$ is an abelian direct factor of $G$. For a triple $(H,C,\delta)$ we consider reflections $r_{H,C,\delta}\in R \subset \Aut_{Hopf}(\DG)$; we say that two reflections are equivalent $r_{H,C,\delta} \sim r'_{H',C',\delta'}$ whenever there exists a group isomorphism $C \simeq C'$. For every $r \in R$ we find the unique $2$-cocycle induced by a pairing $\lambda$ such that the element $(r,\lambda)$ is the braided autoequivalence. We have $\cRUnderL\cong R$. Note however, that contrary to the abelian case, $\cRUnderL$ does not conjugate $\cEUnderL$ and $\cBUnderL$. Also, in order to describe the Brauer-Picard group for the non-lazy case one needs a notion of partial dualizations on \emph{semidirect} products. These do not give lazy elements, unless the semidirect factor is indeed a direct factor. See also the example $G=\SS_3$ in Section \ref{sec_examples} for non-lazy reflections. \\

In Section 5, we finally prove the main result of this article:\\

\begin{theoremX}[\ref{thm_classification}]~\\
(i) Let $G$ be a finite group then for every element $(\phi,\sigma) \in \Aut_{br,L}(\DG\md\mod)$ there exists a $(r,\lambda) \in \cRL$ such that $(\phi,\sigma)$ is in 
\begin{equation*}
(\cVL \ltimes \cBL)\cEL \cdot (r,\lambda) 
\end{equation*}
(ii) Let $G=H \times C$ where $H$ purely non-abelian and $C$ \emph{elementary} abelian. Then $\Aut_{br,L}(\DG\md mod)$ has a double coset decomposition
\begin{equation*} 
\begin{split}
\Aut_{br,L}(\DG\md mod) &= \bigsqcup_{(r,\lambda) \in
\cRL/\sim} \cVL  \cBL \cdot (r,\lambda) \cdot \cVL  \cEL
\end{split}
\end{equation*}
where two partial dualizations $(r_{H,C,\delta},\lambda)$, $(r'_{H',C',\delta'},\lambda')$ are equivalent if and only if there exists a group isomorphism $C \simeq C'$.
\end{theoremX}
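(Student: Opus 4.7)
The plan is to lift the decomposition of $\Aut_{Hopf}(\DG)$ recalled in Section 3 to the level of braided lazy autoequivalences. Given a pair $(\phi,\sigma) \in \underline{\Aut}_{br,L}(\DG\md\mod)$, the underlying Hopf automorphism $\phi$ admits a factorization $\phi = v\cdot b \cdot e \cdot r$ with $v\in V$, $b\in B$, $e\in E$, $r\in R$ by the main theorem of \cite{LP15}. The task is to upgrade this to an ordered product
$$(\phi,\sigma) = (v,1)\cdot (b,\sigma_B)\cdot (e,\sigma_E) \cdot (r,\lambda)$$
in $\Aut_{br,L}(\DG\md\mod)$, where each factor lies in the respective subset $\cVUnderL,\cBUnderL,\cEUnderL,\cRUnderL$ constructed in Propositions \ref{vcat}, \ref{lm_BField}, \ref{efield}, \ref{pcat}.

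For part (i), I would proceed in two stages. First, the reflection $r$ uniquely determines an accompanying pairing $\lambda$ by Proposition \ref{pcat}, so right-multiplying by $(r,\lambda)^{-1}$ replaces $(\phi,\sigma)$ by a braided lazy pair $(\phi',\sigma')$ whose Hopf part $\phi' = v\cdot b \cdot e$ lies in the parabolic subgroup generated by $V,B,E$. Second, I would exploit the bicrossed decomposition $\DG = k^G \bowtie kG$ to split $\sigma'$ into a magnetic component in $\Z^2(G,k^\times)$, an electric component in $\Z^2_L(k^G)$, and mixed components. The braided condition on $(\phi',\sigma')$ forces the mixed terms to be cohomologically trivial (or absorbable into the trivial-cocycle $\cVUnderL$-factor), while the magnetic component, paired with $b$, produces an element of $\cBUnderL$, and the electric component, paired with $e$, produces an element of $\cEUnderL$.

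For part (ii), existence is immediate from (i); the content is uniqueness of the reflection class. Suppose two factorizations of the same $(\phi,\sigma)$ produce reflections $(r_{H,C,\delta},\lambda)$ and $(r_{H',C',\delta'},\lambda')$. Projecting to $\Aut_{Hopf}(\DG)$ and applying the uniqueness statement at the Hopf level from \cite{LP15} shows the two reflections lie in a single $V\cdot (B\cdot E)$-orbit. The hypothesis that $H$ is purely non-abelian forces any element of $V \cong \Aut(G)$ connecting them to respect the direct decomposition $G = H \times C = H' \times C'$, which under the elementary abelian assumption on $C$ reduces to an abstract group isomorphism $C \simeq C'$. Conversely, every such isomorphism extends to an element of $\Aut(G)$ conjugating one reflection datum to the other, realizing the equivalence relation $\sim$ as the exact obstruction to coincidence of double cosets.

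The principal obstacle is the cocycle-level bookkeeping in the second stage of (i). The multiplication in $\underline{\Aut}_{mon,L}(\DG\md\mod) = \Aut_{Hopf}(\DG)\ltimes \Z^2_L(\DG^*)$ twists cocycles by Hopf automorphisms, so distributing $\sigma'$ cleanly across the non-commuting factors $v,b,e$ demands careful tracking of these twists and of Doi-twist compatibility. The difficulty is compounded by the fact that $\cBTildeL$ is a genuine central extension of a subgroup of $B$ by conjugation-invariant distinguished cohomology classes, so the $\cBUnderL$-factor carries cohomological information invisible to the Hopf-level factorization; one must verify that the braided condition on $(\phi,\sigma)$ indeed selects a distinguished cocycle in this slot rather than an arbitrary one.
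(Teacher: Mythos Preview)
Your overall strategy---strip off the reflection first, then peel off the $V$, $B$, $E$ factors one at a time while tracking the cocycle---is exactly the route the paper takes. However, there are two concrete gaps that, as written, would block the argument.

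First, you omit the central-automorphism factor $V_c$ from the Hopf-level decomposition. Theorem~\ref{thm_cell}(v) gives $\phi \in [((V_c \rtimes V)\ltimes B)E]\cdot r$, not $\phi = v\cdot b\cdot e\cdot r$ with $v\in V$. The $V_c$ part has no lift to $\cVUnderL$ (which is only $\Aut(G)$), so it cannot be absorbed into the $\cVL$-factor by fiat. In the paper this is handled by first eliminating the honest $V$-part, then using the braiding constraint (equation~(\ref{h4}) of Lemma~\ref{nec}) on what remains to force the residual $V_c$-part to be the identity. Without this step your factorization does not close.

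Second, and more seriously, your sentence ``the magnetic component, paired with $b$, produces an element of $\cBUnderL$'' assumes what must be proven. An arbitrary $b\in B$ from the Hopf-level decomposition need not lie in $B_{alt}$, and only alternating $b$ admit lifts to $\cBUnderL$ (Proposition~\ref{lm_BField}(iii)). The paper's argument is that once the reflection has been multiplied off, the braiding condition on the residual pair---specifically equation~(\ref{h1}) of Lemma~\ref{nec}---forces $b(g)(h)=\beta_\sigma(h,g)\beta_\sigma(g,h)^{-1}$, hence $b\in B_{alt}$; and analogously equation~(\ref{hh2}) forces $a\in E_{alt}$. This is the crux of the whole proof: the braiding is what cuts $B$ down to $B_{alt}$ and $E$ down to $E_{alt}$, not the cocycle bookkeeping you flag as the principal obstacle. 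Your ``obstacle'' paragraph worries about distinguished cocycles in the kernel of $\cBTildeL\to B_{alt}$, but that issue only arises \emph{after} one has already landed in $B_{alt}$; getting there is the missing step.

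For part (ii), your reduction of disjointness to the Hopf level via Theorem~\ref{thm_cell}(iii) is sound, but you do not use the elementary-abelian hypothesis anywhere. In the paper it is needed to ensure that only untwisted reflections ($\nu=0$) occur in the decomposition of $\phi$, so that one really lands in $R$ rather than the larger set $R_t$ of twisted reflections; without it the double-coset representatives would not be indexed by $\cRL$ alone.
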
	

The proof proceeds roughly as follows: Take an arbitrary element $(\phi,\sigma)$, write $\phi\in \Aut_{Hopf}(\DG)$ according to the decomposition obtained in Section 3 and show that the braiding condition implies that the factors of the decomposition of $\phi$ in $V,E,B,R$ can be lifted to $\cVUnderL,\cEUnderL,\cBUnderL,\cRUnderL$. We simplify $(\phi,\sigma)$ by multiplying with these lifts. Finally, we argue that $(\id,\sigma')$ being a braided autoequivalence implies that $\sigma'$ is (up to a natural equivalence) a conjugation invariant distinguished $2$-cocycle on $G$. \\

We close in Section \ref{sec_examples} by comparing our results in examples for $G$ to the (full) Brauer-Picard group obtained in \cite{NR14} and show that the answer to Question \ref{q_decomposition} is positive in these cases.  

\section{Preliminaries}

We will work with a field $k$ that is algebraically closed and has characteristic zero. We denote by $\widehat{G}$ the group of $1$-dimensional characters of $G$. \\

\subsection{Modules over the Drinfeld double}\hskip 0pt \\ 

\noindent
We assume the reader is familiar with Hopf algebras and the representation theory of Hopf algebras to the extend as given in the standard literature e.g. \cite{Kass94}.  Denote by $kG$ the group algebra and by $k^G= kG^*$ the dual of the group algebra. Both have well known Hopf algebra structures. The Hopf algebra $kG$ acts on itself and on $k^G$ by conjugation: $h \triangleright g = hgh^{-1}$ and $h \triangleright e_g = e_{hgh^{-1}}$, where the functions $e_x \in k^G$ defined by $e_x(y) = \delta_{x,y}$ for a basis of $k^G$. We will also use the convention $g^h := h^{-1}gh$ and $^hg := hgh^{-1}$.  
Starting from a finite dimensional Hopf algebra $H$ one can construct the Drinfeld double $DH$ that is $H^{* \mathrm{cop}}\otimes H$ as a coalgebra (see e.g. \cite{Kass94}). Here we will be mainly interested in the Drinfeld double of the Hopf algebra $kG$ for a finite group $G$. We denote the vector space basis of $\DG$ by $\{e_x \times y\}_{x,y \in G}$. Then $\DG$ has the following Hopf algebra structure:  
$$(e_x \times y)(e_{x'} \times y') = e_x(^yx') (e_x \times yy') \qquad \Delta(e_x \times y) = \sum_{x_1x_2=x}(e_{x_1} \times y) \otimes (e_{x_2} \times y)$$
with the unit $1_{\DG} = \sum_{x \in G} (e_x \times 1_G)$, the counit $\epsilon(e_x \times y) = \delta_{x,1_G}$ and the antipode $S(e_x \times y) = e_{y^{-1}x^{-1}y} \times y^{-1}$. 

\noindent
Later we will also use the Hopf algebra $\DG^*$ the dual Hopf algebra of $\DG$ for this we recall that in $\DG^*$ we have $$(x \times e_y)({x'} \times e_{y'}) = (xx' \times e_y*e_{y'}) \qquad \Delta(x \times e_y) = \sum_{y_1y_2=y}(x \times e_{y_1}) \otimes (x^{y_1} \times e_{y_2})$$ 
In the case the group $G=A$ is abelian we have that $DA \simeq k(\hat{A}\times A)$ and $DA^* \simeq k(A\times\hat{A})$ are isomorphic to each other as Hopf algebras. In general there is no Hopf isomorphism from $\DG$ to $\DG^*$.  
   
\noindent
Let us denote the category of left $\DG$-modules by $\DG \md\mod$ and recall that it is a semisimple braided tensor category as follows: \\
$\bullet$ The simple objects of $\DG \md \mod$ are induced modules $\ocat_g^{\rho} := kG \otimes_{k\Cent(g)}V$, where $[g] \subset G$ is a conjugacy class and $\rho:\Cent_G(g) \rightarrow \GL(V)$ an isomorphism class of an irreducible representation of the centralizer of a representative $g \in [g]$. We have the following left $\DG$-action on $\ocat_g^{\rho}$: 
$$(e_h\times t).(y\otimes v):=e_h((ty)g(ty)^{-1})(ty\otimes v)$$
More explicitly: $\ocat_g^\rho$ is a $G$-graded vector space consisting of $|[g]|$ copies of $V$:  
$$\ocat_g^\rho:=\bigoplus_{ \gamma \in [g]} V_{\gamma},\quad V_{\gamma}:=V$$
Then the action of an element $(e_h\times 1)\in \DG$ is given by projecting to the homogeneous component $V_h$. Choose a set of coset representatives $\{s_i \in G \}$ of $G/\Cent_G(g) \simeq [g]$. Then for a homogenous component $V_\gamma$ with $\gamma \in [g]$ there is a unique representative $s_i \in G$ that corresponds under the conjugation action to the element $\gamma \in [g]$, thus $s_igs_i^{-1}=\gamma$. For an $h \in G$, there is a unique representative $s_j$ such that $hs_i \in s_j\Cent_G(G)$. The action of an element $(1\times h) \in \DG$ is then given by 
\begin{align}\label{dgd}
V_{\gamma}&\to V_{h\gamma h^{-1}}; v\mapsto (1\times h).v:= h.v := \rho(s_j^{-1}hs_i)(v) 
\end{align}
this is indeed well-defined, since $s_jgs_j^{-1}=h \gamma h^{-1}$.  \\
$\bullet$ The monoidal structure on $\DG \md \mod$ is given by the tensor product of $\DG$-modules, i.e. with the diagonal action on the tensor product. \\
$\bullet$ The braiding $\{c_{M,N}:M \otimes N \stackrel{\sim}{\to} N \otimes M \mid M,N \in \DG \md \mod\}$ on $\DG \md \mod$ is defined by the universal $R$-matrix 
$$R = \sum_{g \in G}(e_g \times 1)\otimes (1 \times g) = R_1 \otimes R_2 \in \DG \otimes \DG$$ 
\begin{align}\label{DGbraid} c_{M,N}(m \otimes n) = \tau(R.(m\otimes n)) = R_2.n \otimes R_1.m \end{align}
where $\tau:M \otimes N \to N \otimes M; m \otimes n \mapsto n \otimes m$ is the twist. \\

In the above convention we leave out the sum for $R=R_1 \otimes R_2 \in \DG \otimes \DG$. This should not be confused with the Sweedler notation for the coproduct or coaction. Note that $\DG \md\mod$ is equivalent as braided monoidal category to the category of $G$-Yetter-Drinfeld-modules and the Drinfeld center of the category of $G$-graded vector spaces $Z(\Vect_G)$. \\

Given a monoidal category $\cat$, denote by $\underline{\Aut}_{mon}(\cat)$ be the category of monoidal autoequivalences and natural monoidal transformations. Denote by ${\Aut_{mon}}(\cat)$ the group of isomorphism classes of monoidal autoequivalences. Similarly, for a braided category $\bcat$, let $\underline{\Aut}_{br}(\bcat)$ be the category of braided autoequivalences and natural monoidal transformations and let ${\Aut_{br}}(\bcat)$ be the group of isomorphism classes of braided monoidal autoequivalences. \\

\subsection{Hopf-Galois-Extensions}~\\

\noindent
In order to study braided automorphisms of $\DG\md\mod$ we will make use of the theory of Hopf-Galois extensions. For this our main source is \cite{Schau91}, \cite{Schau96}, \cite{Schau02} and \cite {BC04}. The motivation for this approach lies mainly in the relationship between Galois extensions and monoidal functors as formulated in e.g. in \cite{Schau91} and also stated in Proposition \ref{eqn:monoidal1}. Namely, monoidal functors between the category of $L$-comodules and the category of $H$-comodules are in one-to-one correspondence with $L$-$H$-Bigalois objects. For this reason we are lead to the study of $DG^*$-Bigalois extensions. We have summarized the relevant facts in more detail in Sect.~1 of \cite{LP15}.   


\begin{definition} Let $H$ be a Hopf algebra. A right $H$-comodule algebra $(A,\delta_R)$ is called a right $H$-\emph{Galois extension of } $k$ (or $H$-\emph{Galois object}) if the Galois map 
\begin{diagram} 
\beta_A:&A \otimes A &\rTo^{id_A \otimes \delta_R} &A \otimes A \otimes H &\rTo^{\mu_A \otimes id_H} &A \otimes H \\
&x \otimes y &\rMapsto^{} &x \otimes y_0 \otimes y_1 &\rMapsto^{} &xy_0 \otimes y_1 
\end{diagram} 
is a bijection. A morphism of right $H$-Galois objects is an $H$-colinear algebra morphism. Left $H$-Galois objects are defined similarly. Denote by $\underline{\Gal}$(H) the category of right $H$-Galois extensions of $k$ and by $\mathrm{Gal}(H)$ the set of equivalence classes of right $H$-Galois objects. 
\end{definition}

\begin{definition} Let $L,H$ be two Hopf algebras. An $L$-$H$-\emph{Bigalois object} $A$ is an $L$-$H$-bicomodule algebra which is a left $H$-Galois object and a right $L$-Galois object. Denote by $\Bigal(L,H)$ the set of isomorphism classes of $L$-$H$-Bigalois objects and by $\Bigal(H)$ the set of isomorphism classes of $H$-$H$-Bigalois objects.   
\end{definition}

\noindent
Recall that the \emph{cotensor product} of a right $L$-comodule $(A,\delta_R)$ and a left $L$-comodule $(B,\delta_L)$ is defined by
$$A\square_L B:= \left\{ \sum a\otimes b\in A\otimes B\mid \sum \delta_R(a)\otimes b= \sum a\otimes \delta_L(b)\right\}$$
Moreover, if $A$ is an $E$-$L$-Bigalois object and $B$ an $L$-$H$-Bigalois object then the cotensor product $A\square_L B$ is an $E$-$H$-Bigalois object. 


\begin{proposition} The cotensor product gives $\Bigal(H)$ a group structure. The Hopf algebra $H$ with the natural $H$-$H$-Bigalois object structure is the unit in the group $\Bigal(H)$. Further, we can define a groupoid $\underline{\Bigal}$ where the objects are given by Hopf algebras and the morphisms between two Hopf algebras $L$, $H$ are given by elements in $\Bigal(L,H)$. The composition of morphisms is the cotensor product.  
\label{groupoid}
\end{proposition}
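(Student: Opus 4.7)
The plan is to verify the standard groupoid axioms for the data defined in the proposition, with the group structure on $\Bigal(H)$ falling out as the special case of automorphisms at the object $H$. The main ingredients are functoriality and associativity of the cotensor product, the role of the comultiplication as a unit, and the existence of an inverse Bigalois object; by the remark preceding the proposition, closure under cotensor product is already granted, so the cotensor product of an $E$-$L$-Bigalois object with an $L$-$H$-Bigalois object is automatically an $E$-$H$-Bigalois object.

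First I would check associativity: for $A \in \Bigal(E,L)$, $B \in \Bigal(L,M)$, $C \in \Bigal(M,H)$, both iterated cotensor products $(A\square_L B)\square_M C$ and $A\square_L(B\square_M C)$ realize the same limit of bicomodule algebras in $A\otimes B\otimes C$, namely the simultaneous equalizer of the $L$-coactions on the $A$-$B$ slot and the $M$-coactions on the $B$-$C$ slot. The canonical identification is a map of $E$-$H$-bicomodule algebras and hence gives associativity at the level of isomorphism classes.

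Next I would treat the unit. For $A$ an $L$-$H$-Bigalois object, the right $H$-coaction $\delta_R:A\to A\otimes H$ factors through $A\square_H H \hookrightarrow A\otimes H$, where the latter is defined via $\mathrm{id}\otimes\Delta$ and $\delta_R\otimes\mathrm{id}$; this gives an isomorphism $A\to A\square_H H$ of $L$-$H$-bicomodule algebras, with inverse $\mathrm{id}\otimes\epsilon$. Symmetrically on the left. Thus $H$, with its regular $H$-$H$-bicomodule algebra structure coming from $\Delta$, is a two-sided unit at the object $H$, and more generally the Hopf algebras themselves are the identity morphisms of the groupoid $\underline{\Bigal}$.

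The main obstacle is producing inverses. Given $A\in \Bigal(L,H)$, one must construct $A^{-1}\in \Bigal(H,L)$ together with isomorphisms $A\square_H A^{-1}\cong L$ and $A^{-1}\square_L A\cong H$ of Bigalois objects. The construction uses the bijectivity of the Galois map $\beta_A$: its inverse, the translation map $\tau_A:H\to A\otimes A$, $h\mapsto h^{\langle 1\rangle}\otimes h^{\langle 2\rangle}$, is used to transport the algebra and bicomodule structures onto a new copy of the underlying vector space so that left and right coactions get exchanged and the multiplication is twisted through $\tau_A$; this is precisely the construction given in Schauenburg's work cited above. The nontrivial verifications are that the resulting object is again an $H$-$L$-Bigalois object (bijectivity of its own Galois map follows from that of $\beta_A$ via an explicit formula in the translation map) and that the multiplication map $A\otimes A^{-1}\to L$ and its analogue descend to isomorphisms on the cotensor products. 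Once these are in hand, the groupoid structure on $\underline{\Bigal}$ follows formally, and restricting to endomorphisms at a fixed Hopf algebra $H$ yields the group $\Bigal(H)$ as claimed.
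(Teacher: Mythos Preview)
Your sketch is correct and follows exactly the standard argument due to Schauenburg (see \cite{Schau96}, \cite{Schau91}): associativity of the cotensor product via the common equalizer, the regular bicomodule algebra $H$ as identity via $\delta_R$ and $\mathrm{id}\otimes\epsilon$, and the inverse Bigalois object built from the translation map coming from $\beta_A^{-1}$. There is nothing to compare, since the paper does not give its own proof of this proposition; it is stated as a known preliminary fact with a reference to Schauenburg's work and to \cite{LP15}, Sect.~1, where the relevant details are summarized. Your write-up would serve as a reasonable expansion of those references.
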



Recall that a fiber functor $H \md \mathrm{comod} \rightarrow \mathrm{Vect}_k$ is a $k$-linear, monoidal, exact, faithful functor that preserves colimits. We denote by $\underline{\mathrm{Fun}}_{fib}(H\md\comod,\mathrm{Vect}_k)$ the category of fiber functors and monoidal natural transformations. Given a $H$-comodule $A$, then $A \square_H \bullet:\mathrm{comod} \rightarrow \mathrm{Vect}_k $ is a $k$-linear functor such that lax monoidal structures on this functor are in one-to-one correspondence with algebra structures on $A$ that make $A$ an $H$-comodule algebra. Such a functor is monoidal (not just lax monoidal) if and only if the Galois map $\beta_A$ is a bijection. Moreover we have an equivalence of categories $\underline{\Gal}(H) \simeq \underline{\mathrm{Fun}}_{fib}(H\md \comod,\Vect_k)$ (see \cite{Ulb89}). In order for the image $A \square_H \bullet$ to have a $H$-comodule structure such that it is an equivalence of $H\md\comod$, we need $A$ to be a n $H$-Bigalois object. Moreover we have:          

\begin{proposition}(Sect. 5 \cite{Schau91}) \\ 
Let $H$ be a Hopf algebra then we have the following group isomorphism: 
\begin{equation*}\begin{split} 
\mathrm{Bigal}(H) & \to \Aut_{mon}(H\md\mathrm{comod}) \\ 
A &\mapsto (A \square_H \bullet,J^A) 
\end{split}\end{equation*} 
where the monoidal structure $J^A$ of the functor $ A \square_H \bullet$ is given by 
\begin{equation}
\begin{split} 
J^A_{V,W}:(A \square_H V) \otimes_k (A \square_H W) &\stackrel{\sim}{\rightarrow} A \square_H (V \otimes_k W) \\ 
     \left(\sum x_i \otimes v_i\right)\otimes \left(\sum y_i \otimes w_i\right) &\mapsto \sum x_iy_i \otimes v_i \otimes w_i 
\end{split}
\label{eqn:monoidal1}
\end{equation} 
\label{fib}
\end{proposition}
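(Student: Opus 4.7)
The plan is to upgrade the Ulbrich equivalence $\underline{\Gal}(H)\simeq \underline{\mathrm{Fun}}_{fib}(H\md\comod,\Vect_k)$, just recalled before the statement, to one that tracks the \emph{left} $H$-coaction. Given an $H$-$H$-Bigalois object $A$, the left $H$-coaction on $A$ endows $A\square_H V$ with a natural left $H$-coaction (acting only on the $A$-tensorand), so $A\square_H \bullet$ lifts to a $k$-linear functor $H\md\comod\to H\md\comod$. The formula for $J^A_{V,W}$ is lax monoidal because the multiplication $\mu_A\colon A\otimes A\to A$ is a right $H$-comodule algebra map, and by Schauenburg's characterization (i.e.\ the same principle underlying Ulbrich's theorem), $J^A_{V,W}$ is invertible precisely because the right Galois map $\beta_A$ is a bijection. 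Left $H$-colinearity of $J^A$ follows from left $H$-colinearity of $\mu_A$, which is part of the Bigalois data.

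Next I would show the assignment $A\mapsto (A\square_H\bullet, J^A)$ is a group homomorphism. For the unit, the canonical identification $H\square_H V\cong V$ is an $H$-colinear monoidal natural isomorphism sending the Bigalois unit $H$ to the identity autoequivalence. For multiplicativity, associativity of the cotensor product yields a natural isomorphism
\begin{equation*}
(A\square_H B)\square_H V \;\cong\; A\square_H (B\square_H V)
\end{equation*}
which is visibly $H$-colinear on both sides and is compatible with $J^{A\square_H B}$ and $J^A\circ (A\square_H J^B)$ via the diagonal algebra structure on $A\square_H B$ (exactly the $E$-$H$-Bigalois algebra structure from Proposition~\ref{groupoid}). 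This also proves injectivity of the map on isomorphism classes, because a monoidal isomorphism of functors $A\square_H\bullet\simeq A'\square_H\bullet$ gives, by evaluation at $H$ and using the right coaction, an isomorphism of Bigalois objects $A\cong A'$.

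For surjectivity, given a monoidal autoequivalence $(F,J)$ of $H\md\comod$, I would set $A:=F(H)$ where $H$ is regarded as a left $H$-comodule via its coproduct. Since the right regular coaction $\Delta\colon H\to H\otimes H$ is a morphism in $H\md\comod$ (with trivial coaction on the second tensorand), $F$ transports it to a right $H$-coaction on $A$, so $A$ becomes an $H$-bicomodule. The multiplication $\mu\colon H\otimes H\to H$ is also a morphism in $H\md\comod$, and combining it with $J_{H,H}$ yields a multiplication $A\otimes A\xrightarrow{J_{H,H}} F(H\otimes H)\xrightarrow{F(\mu)} A$ making $A$ an $H$-bicomodule algebra; the unit $\eta\colon k\to H$, also $H$-colinear, provides the unit. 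Monoidality of $J$ combined with the fact that $F$ preserves colimits (being an equivalence) implies that the left and right Galois maps of $A$ are bijective, so $A$ is Bigalois; and composition with the natural isomorphism $A\square_H V\cong F(H)\square_H V\cong F(V)$ (obtained via the counit on $V$) recovers $F$ monoidally.

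The main obstacle will be the last reconstruction step: verifying cleanly that the natural transformation $A\square_H V\to F(V)$ is a monoidal isomorphism and that the induced algebra structure on $F(H)$ agrees with the one giving $J^A=J$. This requires chasing the coend-style description of $F$ (which follows from $F$ being colimit-preserving together with the fact that every $H$-comodule is a colimit of finite-dimensional ones) and matching it with the cotensor $A\square_H\bullet$; after that, all the group-theoretic and bijectivity claims fall out by checking compatibility of coactions and multiplications.
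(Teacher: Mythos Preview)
The paper does not supply its own proof of this proposition: it is stated with the attribution ``(Sect.~5 \cite{Schau91})'' and used as a black box, so there is no in-paper argument to compare against. Your sketch is the standard Tannaka--Schauenburg reconstruction and is correct in outline; the homomorphism and injectivity parts are routine, and your surjectivity argument via $A:=F(H)$ with the transported right coaction and the multiplication $F(\mu)\circ J_{H,H}$ is exactly the approach in Schauenburg's paper. The only place to be careful is the one you already flagged: when you write that $\Delta\colon H\to H\otimes H$ is $H$-colinear ``with trivial coaction on the second tensorand'', you are implicitly using that $F$ sends a trivially-coacting vector space to itself, which needs $F$ to be $k$-linear and colimit-preserving (so that $F(H_{\mathrm{triv}})\cong H_{\mathrm{triv}}$ via $F(k)\cong k$); once that is made explicit, the identification $F(H\otimes H_{\mathrm{triv}})\cong A\otimes H$ and the resulting right coaction are clean, and the rest of the reconstruction goes through.
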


There is a large class of $H$-Galois extensions that are $H$ as $H$-comodules. Let us from now on use the Sweedler notation: $\Delta_H(h) = h_1 \otimes h_2$. A $k$-linear, convolution invertible map $\sigma: H \otimes H \to k$ such that $\sigma(1,b) = \epsilon(b) = \sigma(b,1)$ is called a $2$-cocycle on $H$ if for all $a,b,c \in H$
\begin{align*}
\sigma(a_1,b_1)\sigma(a_2b_2,c) = \sigma(b_1,c_1)\sigma(a,b_2c_2)
\end{align*}   

We denote by $\Z^2(H)$ the set of $2$-cocycles and by $\Reg^1(H)$ the set of $k$-linear, convolution invertible maps $\eta:H \to k$ such that $\eta(1)=1$. We have a map $\d:\Reg^1(H) \to \Z^2(H)$ defined by $\d\eta = (\eta \otimes \eta)* \eta^{-1} \circ \mu_H$ for $\eta \in \Reg^1(H)$. The $2$-cocycle in the image of $\d$ are called \emph{closed}. The convolution of $2$-cocycles does not yield a $2$-cocycle in general and $\Z^2(H)$ does not form a group. In order to ensure that the convolution is a group structure on $\Z^2(H)$ we need the following: A $2$-cocycle $\sigma$ is called \emph{lazy} if it (convolution)-commutes with the multiplication in $H$: $\sigma * \mu_H = \mu_H * \sigma$. Hence for all $a,b \in H$:  
\begin{align}\label{lazy}
\sigma(a_1,b_1)a_2b_2 = a_1b_1\sigma(a_2,b_2)
\end{align}
The set of lazy $2$-cocycles $\Z^2_L(H)$ does indeed form a group with respect to convolution. Denote by $\Reg_L^1(H)$ the subgroup of those $\eta \in \Reg^1(H)$ that additionally commute with the identity $\eta*\id_H = \id_H*\eta$. Define the \emph{lazy cohomology group} by $$\H_L^2(H):=\Z^2(H)/\d(\Reg^1_L(H))$$ 
It is possible that $\d \eta$ is a lazy $2$-cocycle even if $\eta \in \Reg^1(H)$ is not in $\Reg_L^1(H)$. Such an $\eta$ is called \emph{almost lazy} and the subgroup of such almost lazy maps is denoted by $\Reg^1_{aL}(H) \subset \Reg^1(H)$. \\ 

If $H$ is finite dimensional (or pointed) every $H$-Galois object is of the form ${_\sigma}H$ for some $2$-cocycle $\sigma \in \Z^2(H)$. ${_\sigma}H$ is $H$ as an $H$-comodule and has a twisted algebra structure: $a \cdot_\sigma b = \sigma(a_1,b_1)a_2b_2$ for $a,b \in H$. Further, an right $H$-Galois object $A$ is automatically an $L\md H$-Bigalois object with $L \simeq {_\sigma} H_{\sigma^{-1}}$, which is $H$ as coalgebra and has the twisted algebra structure: $a \cdot_\sigma b = \sigma(a_1,b_1)a_2b_2\sigma^{-1}(a_3,b_3)$ for $a,b \in H$. If $\sigma$ is lazy this implies ${_\sigma} H_{\sigma^{-1}} = H$. We call a right $H$-Galois object ${_\sigma}H$ lazy of $\sigma$ is lazy and we call an $H$-Bigalois object lazy if its lazy as a right $H$-Galois object. Denote by $\Bigal_{lazy}(H)$ the group of lazy $H$-Bigalois objects. \\

We can assign a pair $(\phi,\sigma) \in \Aut_{Hopf}(H) \times \Z^2_L(H^*)$ to an $H^*$-Bigalois object $^{\phi^*}{_\sigma}H^*$ that is $H^*$ as a right $H^*$-comodule, where the left $H^*$-coaction is given by precomposing with the dual $\phi^*:H^* \to H^*$ and where the algebra structure is twisted by $\sigma$ as above. This assignment induces a group homomorphism on $\Out_{Hopf}(H) \times \Z^2_L(H^*)$ which is not surjective, but has $\Bigal_{lazy}(H^*)$ as image. The kernel of this map is $\Int(H)/\Inn(H)$, where $\Int(H)$ are internal Hopf automorphisms (of the form $x \cdot x^{-1}$ for an invertible $x \in H$) and where $\Inn(H)$ are inner Hopf automorphisms (of the form $x \cdot x^{-1}$ for a group-like $x \in H$). Given an $x \cdot x^{-1} \in \Int(H)/\Inn(H)$, the dual $x^*:H^* \to k$ is in $\Reg_{aL}^1(H^*)$ and the pair $(x \cdot x^{-1},\d(x^*))$ is a non-trivial element in $\mathrm{Out}_{Hopf}(H) \ltimes \H^2_{L}(H^*)$. To summarize, we have an exact sequence of groups 
\begin{align}\label{monlazy} 1 \to \Int(H)/\Inn(H) \to \mathrm{Out}_{Hopf}(H) \ltimes \H^2_{L}(H^*) \to \Bigal(H^*) \end{align}            

where $\Bigal_{lazy}(H^*)$ is given by the image of the third map. We use the group isomorphism in Proposition \ref{fib} to define the group of lazy monoidal autoequivalences $\Aut_{mon,L}(H \md\mod)$ by the corresponding subgroup of $\Aut_{mon}(H \md\mod)$ and accordingly $\Aut_{br,L}(H \md\mod):= \Aut_{mon,L}(H \md\mod) \cap \Aut_{br}(H \md\mod)$ 

\section{Decomposition of $\Aut_{Hopf}(\DG)$}\label{sec_cell}\noindent

According to the Sequence \ref{monlazy}, the study of lazy monoidal autoequivalences can essentially be reduced to the study of Hopf automorphisms and the study of the second lazy cohomology group. In this section, we recall the decomposition of $\Aut_{Hopf}(\DG)$ that was proven in \cite{LP15}. According to \cite{Keil} (Theorem 1.1 and Corollary 1.2) we can uniquely represent every Hopf automorphism $\phi$ of $\DG$ by an invertible matrix $\left( \begin{smallmatrix} u & a \\ b & v \end{smallmatrix}\right)$ where $u \in \End_{Hopf}(k^G)$, $b \in \Hom(G,\widehat{G})$, $a \in \Hom(\widehat{Z(G)},Z(G))$ and $v \in \End(G)$ fulfilling certain equations (see \cite{Keil}). Then $\phi(f \times g) = u(f_{(1)})b(g) \times a(f_{(2)})v(g)$  for all $f \in k^G$ and $g \in G$. Further, matrix multiplication, where the multiplication of the entries is composition of maps and addition of the entries is the respective convolution product, corresponds to the composition in $\Aut_{Hopf}(\DG)$.     

\begin{proposition}\cite{Keil}~\\
Let $\{e_g \times h \mid g,h \in G\}$ be the standard basis of $\DG$. There are the following natural subgroups of $\Aut_{Hopf}(\DG)$: \\  
(i) $V := \left\{ e_g \times h \mapsto e_{v(g)}\times v(h) \mid v \in \Aut(G) \right\} \simeq \left\{ \left( \begin{smallmatrix} v^{-1} & 0 \\ 0 & v \end{smallmatrix}\right) \mid v \in \Aut(G) \right\}$ \\
(ii) $B := \left\{ e_g\times h \mapsto b(h)(g)\;e_g \times h \mid b \in \Hom(G_{ab},\widehat{G}_{ab}) \right\} \simeq \left\{ \left( \begin{smallmatrix} 1 & b \\ 0 & 1 \end{smallmatrix}\right) \mid b \in \Hom(G_{ab},\widehat{G}_{ab}) \right\}$ \\
(iii) $E := \left\{  e_g \times h \mapsto  \sum_{g_1g_2 = g}e_{g_1} \times a(e_{g_2})h \mid a \in \Hom(\widehat{Z(G)},Z(G)) \right\}$ \\ \hspace*{1.35cm}$\simeq \left\{ \left( \begin{smallmatrix} 1 & 0 \\ a & 1 \end{smallmatrix}\right) \mid a \in \Hom(\widehat{Z(G)},Z(G))\right\}$ \\
(iv) $V_c := \left\{ e_g\times h \mapsto e_{w(g)}\times h  \mid w \in \Aut_c(G) \right\} \simeq \left\{ \left( \begin{smallmatrix} 1 & 0 \\ 0 & w \end{smallmatrix}\right) \mid w \in \Aut_c(G) \right\}$ where $\Aut_c(G)$ is the group of central automorphisms, hence $w \in \Aut(G)$ with $w(g)g^{-1} \in Z(G) \; \forall g \in G$. \\

We have $V \simeq \Aut(G), B \simeq \Hom(G_{ab},\widehat{G}_{ab}), E \simeq \Hom(\widehat{Z(G)},Z(G)), V_c \simeq \Aut_c(G)$. We can also write $B \simeq \widehat{G}_{ab}\otimes_{\ZZ} \widehat{G}_{ab}$ and $E \simeq Z(G) \otimes_{\ZZ} Z(G)$. \\
\label{auto}
\end{proposition}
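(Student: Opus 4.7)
The plan is to invoke the general matrix description of $\Aut_{Hopf}(\DG)$ from \cite{Keil} recalled in the paragraph above the proposition: every $\phi \in \Aut_{Hopf}(\DG)$ is encoded by an invertible matrix $\left(\begin{smallmatrix} u & a \\ b & v \end{smallmatrix}\right)$ with $u \in \End_{Hopf}(k^G)$, $v \in \End(G)$, $b \in \Hom(G,\widehat{G})$, $a \in \Hom(\widehat{Z(G)},Z(G))$ satisfying certain compatibility relations, and with composition of automorphisms corresponding to matrix multiplication (convolution as entry-addition, composition as entry-multiplication). The proof then reduces to specialising this form to each of the four candidate collections $V,B,E,V_c$ and checking that Keil's compatibilities cut out exactly the parameters $\Aut(G)$, $\Hom(G_{ab},\widehat{G}_{ab})$, $\Hom(\widehat{Z(G)},Z(G))$, $\Aut_c(G)$, respectively.

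Concretely, for $V$ I set $a=b=0$ and $u=v^{-1}$ (viewed as acting on $k^G$ by pullback); Keil's conditions then collapse to the single requirement $v \in \Aut(G)$, and conversely every such $v$ produces a valid diagonal matrix. For $B$ I take $u=\id$, $v=\id$, $a=0$, so the compatibility translates into $b \colon G \to \widehat{G}$ being a homomorphism which factors through the abelianisation on both sides, i.e. $b \in \Hom(G_{ab},\widehat{G}_{ab})$; the associated map $e_g \times h \mapsto b(h)(g)\, e_g \times h$ is then multiplicative because $b(hh')(g) = b(h)(g)b(h')(g)$ and coalgebra compatible because $b(h)(g_1 g_2) = b(h)(g_1)b(h)(g_2)$. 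Dually, for $E$ I take $u = \id$, $v = \id$, $b = 0$; the algebra condition forces $a(e_{g_2})$ to lie in $Z(G)$ so that it commutes with $h$ in the expression $a(e_{g_2})v(h)$, while the compatibility in the other direction forces $a$ to factor through the quotient $\widehat{G} \twoheadrightarrow \widehat{Z(G)}$. Finally, for $V_c$ I take $u = \id$, $a = b = 0$ and observe that non-triviality of $v$ combined with $u = \id$ in Keil's relations translates into $v(g)g^{-1} \in Z(G)$, i.e.\ $v \in \Aut_c(G)$.

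The group-structure claims are then automatic: matrix multiplication in each of the four subsets collapses to a single entry operation, namely composition in $V$ and $V_c$, convolution of bicharacters in $B$, and convolution of homomorphisms in $E$, each of which matches the natural group structure on the abstract parameter group. The further identifications $B \simeq \widehat{G}_{ab} \otimes_\ZZ \widehat{G}_{ab}$ and $E \simeq Z(G) \otimes_\ZZ Z(G)$ follow from the Hom--tensor adjunction for finite abelian groups together with Pontryagin self-duality $Z(G) \simeq \widehat{Z(G)}$ (after choosing an isomorphism).

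The main obstacle is purely book-keeping: one has to carefully unwind Keil's compatibility identities for each specialisation of the four matrix entries and check that only the stated constraints survive. No conceptual input beyond the matrix classification is needed, so the proof is essentially a clean reorganisation of Keil's result into four natural subgroups which later serve as the ``cells'' used in the decomposition of the Brauer--Picard group.
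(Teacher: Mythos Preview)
The paper does not give a proof of this proposition at all; it is stated with the citation \cite{Keil} and no argument follows. Your proposal is therefore not competing with any proof in the paper but rather supplying the verification that the authors delegate to the reference. Your approach---specialising Keil's matrix description $\left(\begin{smallmatrix} u & a \\ b & v \end{smallmatrix}\right)$ to each of the four cases and reading off the surviving compatibility constraints---is exactly how one would extract these subgroups from \cite{Keil}, and the group-structure and tensor-identification remarks at the end are correct.

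One small caveat worth noting for your own records: the paper's explicit basis formula for $V_c$, namely $e_g \times h \mapsto e_{w(g)} \times h$, does not match the matrix $\left(\begin{smallmatrix} 1 & 0 \\ 0 & w \end{smallmatrix}\right)$ under the formula $\phi(f \times g) = u(f_{(1)})b(g) \times a(f_{(2)})v(g)$ recalled just above the proposition (that matrix would give $e_g \times h \mapsto e_g \times w(h)$). Your specialisation $u=\id$, $a=b=0$, $v=w$ agrees with the matrix form, so the discrepancy is an internal notational inconsistency in the paper rather than an error on your part.
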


\begin{proposition}\cite{LP15} 
Let $R_t$ be the set of all tuples $(H,C,\delta,\nu)$, where $C$ is an abelian subgroup of $G$ and $H$ is a subgroup of $G$, such that $G = H \times C$, $\delta:kC \stackrel{\sim}{\rightarrow} k^C$ a Hopf isomorphism and $\nu:C \rightarrow C$ a nilpotent homomorphism.  \\  
(i) For $(H,C,\delta,\nu)$ we define a \emph{twisted reflection} $r_{H,C,\delta,\nu}:\DG \rightarrow \DG$ of $C$ by: 
\begin{equation*}
\begin{split}
(f_H,f_C) \times (h,c) &\mapsto (f_H,\delta(c)) \times (h, \delta^{-1}(f_C)\nu(c)) 
\end{split}
\end{equation*} 
where $f_H \in k^H$, $f_C \in k^C$, $h \in H$ and $c \in C$. All $r_{H,C,\delta,\nu}$ are Hopf automorphisms. \\
(ii) Denote by $R$ the subset of $R_t$ with elements fulfilling $\nu=1_C$. We call the corresponding Hopf automorphisms \emph{reflections} on $C$.  \\    
\label{reflections}
\end{proposition}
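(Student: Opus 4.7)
The plan is to reduce the statement to a Hopf automorphism claim on the smaller Drinfeld double $DC$ by exploiting the direct product decomposition $G = H \times C$. Because $C$ is abelian it is central in $G$, so the conjugation action of $G$ on itself splits along the factors, and one obtains an identification of Hopf algebras $\DG \cong DH \otimes DC$. Inspecting the formula, $r_{H,C,\delta,\nu}$ acts as the identity on the $DH$-factor and as
$$\phi: DC \to DC, \qquad f \otimes c \mapsto \delta(c) \otimes \delta^{-1}(f)\,\nu(c)$$
on the $DC$-factor. So the proposition reduces to showing that $\phi$ is a Hopf automorphism of $DC$.

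The slickest route I would take exploits the isomorphism $DC \cong k(\widehat{C} \times C)$ — a plain group algebra, since $C$ is abelian — and writes elements of $k^C$ in the character basis. Under this identification $\phi$ is induced by the set-theoretic map
$$\Phi: \widehat{C} \times C \to \widehat{C} \times C, \qquad (\chi, c) \mapsto \bigl(\delta(c),\; \delta^{-1}(\chi)\,\nu(c)\bigr).$$
I would verify that $\Phi$ is a group homomorphism directly; this uses that $\delta$ and $\delta^{-1}$ are group homomorphisms between $C$ and $\widehat{C}$, that $\nu$ is a group endomorphism of $C$, and that $\widehat{C} \times C$ is abelian so the entries commute. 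An explicit inverse can then be solved for from the defining equations: $\Phi^{-1}(\chi,c) = \bigl(\delta(c)\,\delta(\nu \delta^{-1}(\chi))^{-1},\; \delta^{-1}(\chi)\bigr)$. Since any automorphism of a finite abelian group extends uniquely to a Hopf algebra automorphism of its group algebra, this gives $\phi \in \Aut_{Hopf}(DC)$, whence $r_{H,C,\delta,\nu} \in \Aut_{Hopf}(\DG)$.

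The main pitfall, I expect, is bookkeeping the identification $k^C \cong k\widehat{C}$: the ``standard'' basis $\{e_c\}$ of $k^C$ (in which the coproduct on $DC$ has a clean form) and the character basis $\{\delta(c)\}$ of $k^C$ (in which $\phi$ looks like a swap) are different, and one must pass between them to reconcile the formula in the statement with the clean $\widehat{C} \times C$ picture. Finally, I note that nilpotence of $\nu$ plays no role in the Hopf automorphism claim itself: $\Phi$ is a group automorphism for any group endomorphism $\nu\colon C \to C$. Nilpotence will presumably matter elsewhere in the paper, for instance to parameterize twisted reflections up to equivalence or to control their composition with elements of the group $E$ from Proposition \ref{auto}.
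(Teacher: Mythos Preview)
Your argument is correct. The paper does not actually prove this proposition here; it is quoted from \cite{LP15}, so there is no ``paper's own proof'' to compare against directly. That said, the ambient framework the paper sets up (Keilberg's matrix description in Proposition~\ref{auto}) suggests the route likely taken in \cite{LP15}: encode the candidate map as a $2\times 2$ matrix --- the untwisted reflection appears later in the proof of Theorem~\ref{thm_classification} as $\left(\begin{smallmatrix}\hat p_H & \delta\\ \delta^{-1} & p_H\end{smallmatrix}\right)$ --- and verify Keilberg's compatibility conditions entrywise. Your approach sidesteps that machinery entirely: the tensor splitting $\DG\cong DH\otimes DC$ together with $DC\cong k(\widehat C\times C)$ reduces everything to checking that a single map on a finite abelian group is a group automorphism, which is both shorter and more transparent. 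One small phrasing quibble: ``because $C$ is abelian it is central in $G$'' understates the hypothesis --- you need both that $C$ is abelian \emph{and} that $G=H\times C$ is a direct product, which of course you use. Your closing observation is also right: nilpotence of $\nu$ is irrelevant for the Hopf automorphism claim (any endomorphism $\nu$ works, since $\Phi$ is always invertible); nilpotence enters only in the structural decomposition of Theorem~\ref{thm_cell}(ii).
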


\begin{theorem}~\label{thm_cell}\cite{LP15} \\
(i) Let $G$ be a finite group, then $\Aut_{Hopf}(\DG)$ is generated by the subgroups $V$, $V_c$, $B$, $E$ and the set of reflections $R$. \\
(ii) For every $\phi \in \Aut_{Hopf}(\DG)$ there is a twisted reflection $r = r_{H,C,\delta,\nu} \in R_t$ such that $\phi$ is an element in the double coset 
\begin{equation*} \begin{split}
&[(V_c \rtimes V) \ltimes B] \cdot r \cdot [(V_c \rtimes V) \ltimes E]
\end{split}
\end{equation*}
(iii) Two double cosets corresponding to reflections $(C,H,\delta),(C',H',\delta') \in R$ are equal if and only if $C \simeq C'$. \\  
(iv) For every $\phi \in \Aut_{Hopf}(\DG)$ there is a reflection $r=r_{H,C,\delta} \in R$ such that $\phi$ is an element in $r \cdot [ B ((V_c \rtimes V) \ltimes E)]$ \\
(v) For every $\phi \in \Aut_{Hopf}(\DG)$ there is a reflection $r = r_{H,C,\delta} \in R$ such that $\phi$ is an element in $[((V_c \rtimes V) \ltimes B)E] \cdot r$ 
\end{theorem}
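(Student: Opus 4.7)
The plan is to exploit the identification
\[
\underline{\Aut}_{mon,L}(\DG\md\mod) = \Aut_{Hopf}(\DG) \ltimes \Z^2_L(\DG^*)
\]
coming from Sequence (\ref{monlazy}), together with the cell-style decomposition of $\Aut_{Hopf}(\DG)$ from Theorem \ref{thm_cell}, and to reduce the classification to a statement about the residual lazy $2$-cocycle once all Hopf-automorphism factors of $\phi$ have been absorbed into elements of $\cVUnderL, \cBUnderL, \cEUnderL, \cRUnderL$.

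Concretely, for (i) I would pick a representative $(\phi,\sigma) \in \underline{\Aut}_{br,L}(\DG\md\mod)$ and apply Theorem \ref{thm_cell}(v) to write the Hopf part as $\phi = \phi_{VBE} \cdot r$ with $r = r_{H,C,\delta} \in R$ and $\phi_{VBE} \in [((V_c \rtimes V) \ltimes B)E]$. Propositions \ref{vcat}, \ref{lm_BField}, \ref{efield} and \ref{pcat} supply, for any admissible Hopf automorphism in $V$, $B$, $E$ or $R$, a canonical lazy $2$-cocycle twisting that promotes it to a braided autoequivalence. The braiding hypothesis on $(\phi,\sigma)$, together with the hexagon axiom, is exactly what forces each factor of the decomposition of $\phi$ to be of the "admissible" type, so each factor lifts to an element of the corresponding subgroup $\cRUnderL, \cEUnderL, \cBUnderL, \cVUnderL$. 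I would then multiply $(\phi,\sigma)$ on the right by the inverse of the $\cRUnderL$-lift of $r$, and on the left by successive inverses of lifts of the $V$, $B$, $E$ factors, staying inside $\underline{\Aut}_{br,L}(\DG\md\mod)$ throughout.

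After these reductions the pair collapses to $(\id, \sigma')$ for some residual $\sigma' \in \Z^2_L(\DG^*)$. Imposing braidedness on $(\id,\sigma')$ forces $\sigma'$ to be trivial on the $k^G$-factor of $\DG^*$ and invariant under conjugation by $G$; combined with laziness this identifies its class with a conjugation-invariant \emph{distinguished} $2$-cocycle on $G$, which by the description at the end of Section 4 is a general element of $\cBL$. Reassembling the stripped-off factors yields (i). The main obstacle is the $V_c$-factor, which does not appear among the generators in the theorem statement: I would dispose of it by showing that any braided lazy lift of a central automorphism lies, modulo the kernel $\Int(\DG)/\Inn(\DG)$ of Sequence (\ref{monlazy}), inside $(\cVL \ltimes \cBL)\cEL$. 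This relies on the fact that internal Hopf automorphisms act trivially on equivalence classes of monoidal autoequivalences and that the induced dual cochain is almost lazy.

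For (ii), disjointness of the double cosets follows from Theorem \ref{thm_cell}(iii), which already separates reflections at the level of Hopf automorphisms by the isomorphism class of $C$; I would only need to verify that the $2$-cocycle data on the $\cVL \cBL \cdot (r,\lambda) \cdot \cVL \cEL$ coset does not mix inequivalent reflections. The hypothesis $G = H \times C$ with $C$ elementary abelian enters precisely at this point: the action of $\cVL$ on the datum $\delta$ is transitive (via the $\GL_n(\F_p)$-symmetry of $C$), so the only surviving invariant of a double coset is the isomorphism class of $C$. Exhaustiveness follows from (i), yielding the disjoint decomposition.
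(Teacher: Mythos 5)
The statement you were asked to prove is Theorem \ref{thm_cell}, which is a purely group-theoretic assertion about $\Aut_{Hopf}(\DG)$: it involves no $2$-cocycles, no monoidal structures and no braidings, only Hopf algebra automorphisms of the Drinfeld double and the subgroups $V$, $V_c$, $B$, $E$ together with the set of (twisted) reflections. Your proposal instead outlines a proof of the paper's main result, Theorem \ref{thm_classification}, on the decomposition of the lazy braided autoequivalence group $\Aut_{br,L}(\DG\md\mod)$ into $\cVL\cBL\cdot(r,\lambda)\cdot\cVL\cEL$. Worse, your very first step is to ``apply Theorem \ref{thm_cell}(v) to write the Hopf part as $\phi=\phi_{VBE}\cdot r$'', i.e.\ you invoke the statement you are supposed to establish as an ingredient of its own proof. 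As a proof of Theorem \ref{thm_cell} the argument is therefore circular and establishes nothing; everything you say about braiding conditions, the hexagon axiom, distinguished conjugation-invariant cocycles, and the kernel $\Int(\DG)/\Inn(\DG)$ of Sequence (\ref{monlazy}) is simply irrelevant to the target statement.

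A genuine proof of Theorem \ref{thm_cell} (which this paper does not reproduce --- it is imported from \cite{LP15}) has to start from Keilberg's description of $\Aut_{Hopf}(\DG)$: every $\phi$ is uniquely represented by an invertible matrix $\left(\begin{smallmatrix} u & a \\ b & v\end{smallmatrix}\right)$ with $u\in\End_{Hopf}(k^G)$, $b\in\Hom(G,\widehat{G})$, $a\in\Hom(\widehat{Z(G)},Z(G))$, $v\in\End(G)$ subject to compatibility constraints, with composition of automorphisms given by matrix multiplication. One then performs a Gauss/Bruhat-type elimination: multiplying on the left by elements of $(V_c\rtimes V)\ltimes B$ and on the right by elements of $(V_c\rtimes V)\ltimes E$, one reduces the matrix to a normal form which is exactly a twisted reflection $r_{H,C,\delta,\nu}$ supported on a direct abelian factor $C$ of $G$; the isomorphism class of $C$ is then shown to be the complete invariant separating double cosets, which gives (iii), and variants of the elimination give the one-sided statements (iv) and (v). The delicate step --- showing that the off-diagonal blocks $u$ and $a$, after reduction, force a genuine direct-product decomposition $G=H\times C$ with $\delta:kC\ito k^C$ a Hopf isomorphism --- is the heart of that argument, and nothing in your proposal addresses it.
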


We illustrate the statement of Theorem \ref{thm_cell} on some examples:

\begin{example}\label{exm_Fp_Aut}
	For $G=\ZZ_p^n$ with $p$ a prime number, we fix an isomorphism $\ZZ_p \simeq \widehat{\ZZ}_p$. We then have $\Aut_{Hopf}(\DG) \simeq \GL_{2n}(\F_p)$ 
	The previously defined subgroups are in this case:
	\begin{itemize}
	\item $V \cong \Aut(G) = \GL_n(\F_p)$ and $V_c\ltimes V\cong \GL_n(\F_p)\times \GL_n(\F_p)$
	\item $B\cong \widehat{G}_{ab}\otimes \widehat{G}_{ab}=\F_p^{n \times  n}$ as 
	additive group.
	\item $E\cong Z(G)\otimes Z(G)=\F_p^{n \times n}$ as additive group.
	\end{itemize}
	All reflections are not twisted and can be described as follows: For each dimension $d\in\{0,\ldots, n\}$ there is a unique isomorphism type $C\cong \F_p^d$. The possible subgroups of this type $C\subset G$ are 
	the Grassmannian $\mbox{Gr}(n,d,G)$, the possible $\delta:C \stackrel{\sim}{\to} \hat{C}$ are parametrized by $\GL_d(\F_p)$ and in this fashion $R$ can be enumerated.
	On the other hand, we have only $n+1$ representatives $r_{[C]}$ for each dimension $d$, given for example by permutation matrices.  
One checks this indeed gives a decomposition of $\GL_{2n}(\F_p)$ into $V_cVB$-$V_cVE$-cosets, e.g.
\begin{center}
\begin{tabular}{rrccccc}
$\GL_{4}(\F_p)$ & $=$ & $(V_cVB\cdot r_{[1]}\cdot V_cVE)$ & $\cup$ & $(V_cVB\cdot r_{[\F_p]}\cdot V_cVE)$ & $\cup$ & $(V_cVB\cdot r_{[\F_p^2]}\cdot V_cVE)$ \\
 $|\GL_{4}(\F_p)|$ & $=$ & $p^8|\GL_2(\F_p)|^2$ 
 &$+$ &  $\frac{p^3|\GL_2(\F_p)|^4}{(p-1)^4}$ 
 &$+$ & $p^4|\GL_2(\F_p)|^2$ \\
  & $=$ & $p^8(p^2-1)^2(p^2-p)^2$ 
 & $+$ &  $\frac{p^3(p^2-1)^4(p^2-p)^4}{(p-1)^4}$ 
 & $+$ & $p^4(p^2-1)^2(p^2-p)^2$ \\
  & $=$ & \multicolumn{3}{l}{$(p^4-1)(p^4-p)(p^4-p^2)(p^4-p^3)$} 
 &  & 
\end{tabular}
\end{center}
It corresponds to a decomposition of the Lie algebra $A_{2n-1}$ according to the 
$A_{n-1}\times A_{n-1}$ parabolic subsystem. Especially on the level of Weyl 
groups we have a decomposition as double cosets of the parabolic Weyl group
\begin{align*}
    \SS_{2n}
    &=(\SS_n\times \SS_n)1(\SS_n\times \SS_n)
    \;\cup\; (\SS_n\times \SS_n)(1,1+n)(\SS_n\times \SS_n)\;\cup\\
    & \;\cdots
    \;\cup \;(\SS_n\times \SS_n)(1,1+n)(2,2+n)\cdots (n,2n)(\SS_n\times 
\SS_n)\\
    e.g.\quad |\SS_4|
    &=4+16+4
\end{align*}

In this case, the full Weyl group $\SS_{2n}$ of $\GL_{2n}(\F_p)$ is the set of all 
reflections (as defined above) that preserve a given decomposition 
$G=\F_p\times \cdots\times \F_p$.
\end{example}

\section{Subgroups of $\Aut_{br}(\DG\md\mod)$}
\subsection{General considerations}

Recall from the Preliminaries that every lazy $\DG^*$-Bigalois object is of the form ${_\sigma}^{\phi^*}\DG^*$ for some $\sigma \in \H^2_L(\DG^*)$ and some $\phi \in \Out_{Hopf}(\DG)$. The functor in $\Aut_{mon,L}(\DG\md\mod)$ corresponding to ${_\sigma}^{\phi^*}\DG^*$ under the group isomorphism in Proposition \ref{fib} acts on objects by precomposing with $\phi$. We want to slightly modify this action in order to have nicer formulas. For this we use a anti-automorphism (called flip in Def. 3.1 of \cite{Keil}):  
\begin{align}\label{flip} ^\dag:\Aut_{Hopf}(\DG) \ito \Aut_{Hopf}(\DG); \begin{pmatrix} u & b \\ a & v \end{pmatrix} \mapsto \begin{pmatrix} v^* & b^* \\ a^* & u^* \end{pmatrix} \end{align} where $v^*:k^G \to k^G$ is the dual of $v:kG \to kG$, $u^*:kG \to kG$ is the dual of $u:k^G \to k^G$, and similarly $b^*:kG \to k^G$ and $a^*:k^G \to kG$.

\begin{definition}
We have the following map: 
\begin{align} 
\Psi:\Aut_{Hopf}(\DG) \times \Z_L^2(\DG^*) &\to \underline{\Aut}_{mon}(\DG\md\mod) \nonumber \\
                  (\phi,\sigma) &\mapsto (F_\phi,J^\sigma)
\end{align}
where $F_\phi$ assigns a left $\DG$-module $(M,\rho_L)$ to the left $\DG$-module $(M,\rho_L\circ (\phi^\dag \otimes_k \id))$ simply denoted by $_{\phi}M$. The monoidal structure is given by 
\begin{align*}
J^{\sigma}_{M,N}: {_\phi}M \otimes_k {_\phi}N \stackrel{\sim}{\to} {_\phi}(M \otimes_k N); \;
m \otimes n \mapsto \sigma_1.m \otimes \sigma_2.m 
\end{align*}
where we view the $2$-cocycle $\sigma \in \Z_L^2(\DG^*)$ as $\sigma=\sigma_1 \otimes \sigma_2 \in \DG \otimes_k \DG$ leaving out the sum. This should not be confused with the Sweedler notation for the coproduct.
\end{definition}   

 \begin{definition}\label{defmonlazy}~Let us define: \\ 
(i)  $\underline{\Aut}_{mon,L}(\DG \md \mod) := \im(\Psi) \simeq \Aut_{Hopf}(\DG) \ltimes \Z^2_L(\DG^*)$. \\  
(ii) $\widetilde{\Aut}_{mon,L}(\DG \md\mod) := \Out_{Hopf}(\DG) \ltimes \H^2_L(\DG^*)$. \\
(iii) $\Aut_{mon,L}(\DG \md \mod)$ as the image of 
  $\widetilde{\Aut}_{mon,L}(\DG \md\mod) \rightarrow \Aut_{mon}(\DG \md \mod)$. \\
(iv) $\underline{\Aut}_{br,L}(\DG \md\mod) := \underline{\Aut}_{mon,L}(\DG \md \mod) \cap \underline{\Aut}_{br}(\DG \md \mod)$ \\
(v) $\widetilde{\Aut}_{br,L}(\DG\md\mod)$ as the image of 
$\underline{\Aut}_{br,L}(\DG \md\mod) \to \widetilde{\Aut}_{mon,L}(\DG \md\mod)$ and $\Aut_{br,L}(\DG\md\mod)$ as the image of
$\widetilde{\Aut}_{br,L}(\DG \md\mod) \to \Aut_{br}(\DG\md\mod)$ \\
(vi) For $\mathcal{\underline{U}} \subset \underline{\Aut}_{br,L}(\DG\md\mod)$ we denote the respective images by $\widetilde{\mathcal{U}} \subset \widetilde{\Aut}_{br,L}(\DG\md\mod)$ and $\mathcal{U} \subset \Aut_{br,L}(\DG\md\mod)$.   
\end{definition} 

Before constructing the subgroups mentioned above, we show some general properties. The following Lemma follows essentially from Theorem 9.4 \cite{Keil} but we want state it a bit differently in our notation. 

\begin{lemma}\label{lm_dgaction} 
Let $\phi\in \Aut_{Hopf}(\DG)$ in the form $\phi(f \times g) = u(f_{(1)})b(g) \times a(f_{(2)})v(g)$ for $f \in k^G$ and $g \in G$.   
Then the functor $F_\phi$ maps a $\DG$-module $M$ to the $\DG$-module ${_\phi}M$ with the action: $$(f \times g)._\phi m := \phi^\dag(f \times g).m := (v^*(f_{(1)})b^*(g) \times a^*(f_{(2)})u^*(g)).m$$  $F_\phi$ has the following explicit form on simple $\DG$-modules:  
\begin{equation*}
F_\phi(\ocat^\rho_g) = \ocat^{(\rho \circ u^*)b(g)}_{a(\rho')v(g)}
\end{equation*}
where we denote by $\rho': \Z(G) \rightarrow k^{\times}$ the one-dimensional representation such that the any central element $z\in \Z(G)$ act in $\rho$ by multiplication with the scalar $\rho'(z)$. In particular $\rho|_{\Z(G)}=\dim(\rho)\cdot \rho'$.      
\end{lemma}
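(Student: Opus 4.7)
The first assertion is immediate from the construction of $F_\phi$: by definition the $\DG$-action on ${_\phi}M$ is obtained by precomposing with $\phi^\dag$, and substituting the explicit form of $\phi$ together with the flip $^\dag$ from \eqref{flip} yields the displayed formula.

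For the second assertion, the plan is to identify the simple $F_\phi(\ocat^\rho_g)$ by extracting its two pieces of invariant data: the support of its $k^G$-grading (giving the magnetic charge, i.e.\ the conjugacy class) and the induced representation of the centralizer of a representative (the electric charge). These can be read off either from the character identity $\chi_{F_\phi(M)} = \chi_M \circ \phi^\dag$ or, equivalently, by evaluating $\phi^\dag(e_h \times 1)$ and $\phi^\dag(1\times k)$ on the homogeneous components of $\ocat^\rho_g$. For the grading, I compute $\phi^\dag(e_h \times 1) = \sum_{h_1 h_2 = h} v^*(e_{h_1}) \times a^*(e_{h_2})$ using $b^*(1_G)=1_{k^G}$ and $u^*(1_G)=1_G$. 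Since $a$ factors as $k^G \twoheadrightarrow k^{Z(G)} \xrightarrow{\sim} k\widehat{Z(G)} \to kZ(G)$, the dual $a^*(e_{h_2})$ vanishes unless $h_2 \in Z(G)$ and then lies in $kZ(G)$, hence acts on any component $V_\gamma$ by a scalar via $\rho'$. A short Fourier/orthogonality calculation on $Z(G)$ identifies this scalar with $\delta_{h_2,\,a(\rho')}$, and combined with $v^*(e_{h_1})$ acting as the projection onto $\{\alpha\in G:v(\alpha)=h_1\}\cap[g]$ this forces the new grading to be supported on the conjugacy class $[a(\rho')v(g)]$.

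For the centralizer representation, I compute $\phi^\dag(1\times k) = b^*(k)\times u^*(k)$ (using $v^*(1_{k^G})=1_{k^G}$ and, from unit preservation for the Hopf map $\phi^\dag$, $a^*(1_{k^G})=1_G$). Since $a(\rho')\in Z(G)$ we have $\Cent_G(a(\rho')v(g))=\Cent_G(v(g))$. Acting on $v \in V_g$ (which by the previous step sits in the new $V_{a(\rho')v(g)}$-component), the group-like $u^*(k)$ contributes the matrix $\rho(u^*(k))$ through the original centralizer action, while $b^*(k)$ contributes the scalar $b(g)(k)$, yielding the representation $(\rho\circ u^*)\otimes b(g)$ of $\Cent_G(v(g))$, as claimed. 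The main obstacle is the compatibility $u^*(\Cent_G(v(g)))\subseteq\Cent_G(g)$ that makes $\rho\circ u^*$ well-defined as a representation of the new centralizer in the first place; this is forced by the Hopf axioms on the Keil matrix representing $\phi\in\Aut_{Hopf}(\DG)$ (cf.\ Theorem~1.1 of \cite{Keil} as recalled in Section~\ref{sec_cell}), and the remainder of the argument is bookkeeping with the various duals appearing in the flip.
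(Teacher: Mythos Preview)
The paper does not actually give a proof of this lemma; it states just before the lemma that it ``follows essentially from Theorem 9.4 \cite{Keil}'' and leaves it at that. Your argument is a direct computation of the two invariants (grading support and centralizer action) of the simple module ${_\phi}\ocat^\rho_g$, and it is essentially correct.

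One point is worth making explicit to close the argument cleanly. You show that the old component $V_g$ sits in new degree $a(\rho')v(g)$ and that, granting the compatibility $u^*(\Cent(v(g)))\subseteq \Cent(g)$, it is a $\Cent(v(g))$-submodule carrying $(\rho\circ u^*)\otimes b(g)$. To conclude, use that $F_\phi$ is an equivalence: then ${_\phi}\ocat^\rho_g$ is simple, so its degree-$a(\rho')v(g)$ component is an \emph{irreducible} $\Cent(v(g))$-module. Since $V_g$ is a nonzero submodule, it must be the whole component, which simultaneously shows that $(\rho\circ u^*)b(g)$ is irreducible (so that the right-hand side $\ocat^{(\rho\circ u^*)b(g)}_{a(\rho')v(g)}$ is a well-defined simple) and that the two simples agree. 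Without this step one has not yet excluded the possibility that several old components $V_\alpha$ with $v(\alpha)=v(g)$ collapse into the same new degree, which would make the centralizer representation larger than your computation on $V_g$ alone detects. A small notational quibble: you use $v$ both for a vector in $V_g$ and for the endomorphism $v\in\End(G)$ in the same sentence.
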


The functor $(F_\phi,J^\sigma) \in \underline{\Aut}_{mon,L}(\DG\md\mod)$ is braided if and only if the following diagram commutes: 

\begin{diagram}
&{_\phi}M \otimes {_\phi}N &\rTo^{F_{\phi}(J^\sigma_{M,N})}  & {_\phi}(M \otimes N) \\
&\dTo_{c_{{_\phi}M,{_\phi}N}} & &\dTo_{F_\phi(c_{M,N})} \\ 
&{_\phi}N \otimes {_\phi}M &\rTo_{F_{\phi}(J^\sigma_{N,M})} &{_\phi}(N \otimes M)   
\end{diagram}
for all $M,N \in \DG \md \mod$. This is equivalent to the fact that for all $\DG$-modules $M,N$  
\begin{align}
R_2.\sigma_2.n \otimes R_1.\sigma_1.m &= \sigma_{1}.\phi^*(R_2).n \otimes \sigma_{2}.\phi^*(R_1).m  
\label{eqn:braid}
\end{align}
holds for all $m \in M$ and $n \in N$. Where $R=R_1 \otimes R_2 = \sum_{x \in G} (e_x \times 1) \otimes (1 \times x)$ is the $R$-matrix of $\DG$ and $c$ the braiding in $\DG\md\mod$. As above, we identify $\sigma \in Z^2_L(\DG^*)$ with an element $\sigma=\sigma_1 \otimes \sigma_2 \in \DG \otimes \DG$. \\

From \cite{LP15} Lem. 5.3, Lem. 5.4 and Lem. 5.6 we know three subgroups of $\Z^2_L(\DG^*)$: 
\begin{itemize}\itemsep5pt
\item $\Z^2_{inv}(G,k^\times)$: group $2$-cocycles $\beta \in \Z^2(G,k^\times)$ such that $\beta(g,h)=\beta(g^t,h^t)$ $\forall t,g \in G$ that are trivially extended to $2$-cocycles on $\DG^*$.  
\item $\Z^2_c(k^G)$: $2$-cocycles $\sigma \in \Z^2(k^G)$ such that $\alpha(e_g,e_h)= 0$ if $g$ or $h$ not in $Z(G)$, extended trivially to $\DG^*$.  
\item $\P_c(kG,k^G)\simeq \Hom(G,Z(G))$: central bialgebra pairings $\lambda:kG \times k^G \to k$ resp. group homomorphisms $G \to Z(G)$. These give cocycles on $\DG^*$ as follows: $\sigma_{\lambda}(g \times e_x,h \times e_y) = \lambda(g,e_y)\epsilon(e_x)$.  
\end{itemize}
On the other hand, $\beta_{\sigma}(g,h)=\sigma(g \times 1, h \times 1)$ defines a $2$-cocycle in $\Z^2_{inv}(G,k^\times)$. Further, for $\chi,\rho \in \widehat{G}$ and $\widehat{G}=\Hom(G,k)$ the group of characters, $\alpha_{\sigma}(\chi,\rho) := \sigma(1 \times \chi,1 \times \rho)$ defines a $2$-cocycle in $\Z^2(\widehat{G},k^\times)$. Also, $\lambda_{\sigma}(g,f):=\sigma^{-1}( (g \times 1)_1, 1 \times f_1)\sigma(1 \times f_2, (h \times 1)_2)$ defines a lazy bialgebra pairing in $\P_L(kG,k^G) \simeq \Hom(G,G)$.    

\begin{lemma}\label{lmm}
Let $\phi \in \Aut_{Hopf}(\DG)$ be given as above by $$ \phi(f \times g) = u(f_1)b(g) \times a(f_2)v(g)$$ and $\sigma \in \Z^2_L(\DG^*)$ such that $(F_\phi,J^\sigma)$ is braided then the following equations have to hold for all $\rho,\chi \in \widehat{G}$, $g,h \in G$:  
\begin{align}
\beta_{\sigma}(g,g^{-1}hg) &= \beta_{\sigma}(h,g)b(h)(v(g)) \label{h1} \\
\alpha_{\sigma}(\rho,\chi) &= \alpha_{\sigma}(\chi,\rho)u(\chi)(a(\rho)) \label{hh2}\\ 
\lambda_{\sigma}(h,\chi) &= b(h)(a(\chi)) \label{h3}\\
\rho(g) &= u(\rho)[v(g)]b(g)[a(\rho)] \label{h4}
\end{align} 
\label{nec}
\end{lemma}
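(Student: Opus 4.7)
The plan is to derive all four equations (h1)--(h4) from the single braiding compatibility (4.2) by evaluating it on judiciously chosen pairs of simple $\DG$-modules. Taking $M=N=\DG$ as the left regular module with $m=n=1$, (4.2) becomes the universal identity
\begin{equation*}
\sum_{x\in G}(1\times x)\sigma_2\otimes(e_x\times 1)\sigma_1
=\sigma_1\phi^{\dagger}(R_2)\otimes\sigma_2\phi^{\dagger}(R_1)
\end{equation*}
in $\DG\otimes\DG$. Using the matrix form of $\phi$ recalled in Section~3, one first computes
$\phi^{\dagger}(1\times x)=b^*(x)\times u^*(x)$ and $\phi^{\dagger}(e_x\times 1)=\sum_{y_1y_2=x}v^*(e_{y_1})\times a^*(v^*(e_{y_2}))$,
turning the identity into a concrete combinatorial statement to which we apply suitable test functionals in $\DG^*\otimes\DG^*$.

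Each of (h1)--(h4) is then obtained by a different such evaluation, designed to isolate one of the three cocycle components $\beta_\sigma,\alpha_\sigma,\lambda_\sigma$ together with the relevant block of $\phi$. For (hh2) I would test on electric one-dimensional modules $\ocat^\rho_1\otimes\ocat^\chi_1$ with $\rho,\chi\in\widehat{G}$: characters of $\DG$ act by scalars, the $\sigma$-factors collect into $\alpha_\sigma(\rho,\chi)$ on the left and $\alpha_\sigma(\chi,\rho)$ on the right, while the $R$-part contributes the cross term $u(\chi)(a(\rho))$. For (h1) the relevant test uses magnetic modules $\ocat^{\triv}_g\otimes\ocat^{\triv}_h$ with $g,h\in G$ arbitrary; the conjugation $g^{-1}hg$ materialises from the twist $x\mapsto x^{y_1}$ in the coproduct $\Delta(g\times e_y)=\sum_{y_1y_2=y}(g\times e_{y_1})\otimes(g^{y_1}\times e_{y_2})$ of $\DG^*$, which must be invoked when separating $\sigma$ from the $R$-factor on either side. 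For (h3) the mixed test on $\ocat^{\triv}_h\otimes\ocat^\rho_1$ produces $\lambda_\sigma(h,\chi)$ on the left and the cross entry $b(h)(a(\chi))$ on the right. Finally, (h4) is $\sigma$-free and is obtained by testing on $\ocat^\rho_g$ for $g\in Z(G)$ and varying $\rho,g$: laziness of $\sigma$ lets the cocycle factor $\sigma(\xi_g^\rho,\xi_g^\rho)$ appear symmetrically on both sides and cancel, after which one is left with the identity $\rho(g)=u(\rho)[v(g)]\,b(g)[a(\rho)]$, extended from the centre to all of $G$ by the surjectivity of the magnetic/electric projections.

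The main obstacle I anticipate is bookkeeping the interplay between laziness of $\sigma$ (a condition w.r.t.\ multiplication in $\DG^*$ only) and the twisted coproduct of $\DG^*$ (whose conjugation $x^{y_1}$ is precisely what produces the $g^{-1}hg$ in (h1) and the asymmetric factor $u(\chi)(a(\rho))$ in (hh2)). Equation (h1) is the most delicate because it forces the use of non-grouplike functionals on non-central $g,h\in G$, so the twisted coproduct must be carried through the entire evaluation; by contrast (hh2)--(h4) simplify substantially since, on 1-dimensional simples, the characters of $\DG$ act by scalars and the $\sigma$-Sweedler bookkeeping reduces to pairings of grouplikes.
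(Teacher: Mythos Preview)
Your overall strategy coincides with the paper's: evaluate the braiding condition (4.2) on the regular module with $m=n=1$ to obtain a universal identity in $\DG\otimes\DG$, then extract (h1)--(h4) by pairing against chosen elements of $\DG^*\otimes\DG^*$. The paper organises this by first reading off the coefficient of $(e_h\times d)\otimes(e_g\times t)$ to obtain a single scalar ``master equation'' in the free variables $g,h,t,d\in G$, and then specialising (setting $g=1$, or summing against characters $\rho(t)\chi(d)$, etc.) to isolate each of the four relations. Your description in terms of evaluating on specific simple modules $\ocat_g^\rho$ is equivalent in spirit, but mixing the two viewpoints causes a real problem in your handling of (h4).

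Two gaps there. First, laziness of $\sigma$ does \emph{not} imply that the common factor $\sigma(g\times\rho,g\times\rho)$ is nonzero, because $g\times\rho$ is not group-like in $\DG^*$ once $\rho\neq\epsilon$ (even for $g$ central); the paper explicitly flags this and spends a separate argument, applying the $2$-cocycle identity repeatedly, to decompose $\sigma(g\times\chi,h\times\rho)$ as a product of factors of the form $\beta_\sigma(\cdot,\cdot)$, $\alpha_\sigma(\cdot,\cdot)$, $\sigma(g\times1,1\times\chi)$, each of which is shown to be invertible. You cannot simply ``cancel'' without this. Second, your plan to establish (h4) only for $g\in Z(G)$ and then extend ``by surjectivity of the magnetic/electric projections'' does not work: both sides of (h4) factor through $G_{ab}$ in $g$, but the map $Z(G)\to G_{ab}$ is not surjective in general (already $G=\SS_3$ has $Z(G)=1$ and $G_{ab}=\ZZ_2$), so knowing the identity on central $g$ is strictly weaker. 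The paper avoids this entirely: its master equation holds for arbitrary $g\in G$ because it comes from applying \emph{functionals} $g\times e_t$ to the universal identity, not from evaluating on one-dimensional modules, and (h4) then drops out for all $g$ once the non-vanishing of $\sigma(g\times\rho,g\times\rho)$ is established.
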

\begin{proof} Evaluating equation (\ref{eqn:braid}) we get 
\begin{align*}
&\sum_{g,t,h,d,k \in G}\sigma(g \times e_t,h \times e_d) (1 \times k).(e_h \times d).n \otimes (e_k \times 1).(e_g \times t).m  \\ 
=&\sum_{g,t,h,d,k \in G}\sigma(g \times e_t,h \times e_d) (e_g \times t).\phi^*(1 \times k).n \otimes (e_h \times d).\phi^*(e_k \times 1).m  
\end{align*}
The left hand side is equal to 
\begin{align*}
&\sum_{g,t,h,d \in G}\sigma(g \times e_t,h \times e_d) (e_{ghg^{-1}} \times gd).n \otimes (e_g \times t).m \\
=&\sum_{g,t,h,d \in G}\sigma(g \times e_t,g^{-1}hg \times e_{g^{-1}d}) (e_{h} \times d).n \otimes (e_g \times t).m  \\ 
\end{align*}
and the right hand side is equal to
\begin{align*}
&\sum_{g,k,t,h,d,k_1k_2=k}\hspace{-0.7cm}\sigma(g \times e_t,h \times e_d) (e_g \times t).(b^*(k) \times u^*(k)).n \otimes (e_h \times d).(v^*(e_{k_1}) \times a^*(e_{k_2})).m  \\
=&\hspace{-0.6cm}\sum_{g,t,h,d,w,y,z,x}\hspace{-0.7cm}\sigma(g \times e_t,h \times e_d) a^w_{x}b(y)(v(z)w)(e_g \times t).(e_y \times u^*(v(z)w)).n \otimes (e_h \times d).(e_z \times x).m  \\
=&\hspace{-0.6cm}\sum_{g,t,h,d,w,y,z,x}\hspace{-0.7cm}\sigma(g \times e_t,h \times e_d) a^w_{x}b(y)(v(z)w)(\delta_{g,tyt^{-1}}e_g \times tu^*(v(z)w)).n \otimes (\delta_{h,dzd^{-1}}e_h \times \d x).m  \\
=&\sum_{t,d,y,z,x,w}\sigma(y \times e_t,z \times e_d)b(y)(v(d^{-1}zd)w)a^w_{x}(e_{y} \times tu^*(v(d^{-1}zd)w)).n \otimes (e_{z} \times \d x).m  \\
=&\sum_{t,d,y,h,x,w}\sigma(h \times e_d,g \times e_t)b(h)(v(t^{-1}gt)w)a^w_{x}(e_{h} \times du^*(v(t^{-1}gt)w)).n \otimes (e_{g} \times tx).m  \\
=&\hspace{-1cm}\sum_{  \myover{t,h,g,d,x,w,d'}{  d = d' u^* \circ v(xt^{-1}gtx^{-1}) u^*(w)}} \hspace{-1cm}\sigma(h \times e_{d'},g \times e_{tx^{-1}})b(h)(v(g)w)a^w_{x}(e_{h} \times d).n \otimes (e_{g} \times t).m  
\end{align*}

Here we have used several times that the homomorphism $a$ is supported on $Z(G)$ and that $b$ maps $G$ to the character group $\hat{G}$ which is abelian. 
We now that the above equality of the right and left hand side have to hold in particular for the regular $\DG$-module and the elements $m=n=1$. This implies: 
\begin{align}\label{master}
\sigma(g \times e_t , h^g \times e_{g^{-1}d}) &= \hspace{-1cm} \sum_{\myover{x,w,d'}{ d = d' u^*(w)(u^*\circ v)(g^t)}} \hspace{-1cm} \sigma(h \times e_{d'}, g \times e_{tx^{-1}})b(h)(v(g)w)a^w_x 
\end{align} 
for all $g,h,d,t \in G$ where $a^w_x = e_w(a(e_x))$. On the other hand, if equation (\ref{master}) holds, then also the right and left hand side above are equal. 
Let us set $g=1$, sum over all $d$, multiply with $\chi(t)$ for $\chi \in \widehat{G}$ and sum over $t$ in ($\ref{master}$):  
\begin{align*}
\sigma(1 \times \chi, h \times 1) &= \sum_{x,w,t}\chi(t)\sigma(h \times 1, 1 \times e_{tx^{-1}})b(h)(w)e_w(a(e_x)) \\ 
&= \sum_{x,t}\chi(t)\chi(x) \sigma(h \times 1, 1 \times e_{t})b(h)((a(e_x))) \\ 
&= \sigma(h \times 1, 1 \times \chi) b(h)(a(\chi))   
\end{align*}
applying the convolution with $\sigma^{-1}$ on both sides leads to equation ($\ref{h3}$). Further, we multiply both sides of equation $(\ref{master})$ with $\rho(t),\chi(d)$ for some $\chi,\rho \in \hat{G}$ and sum over all $t,d \in G$:
\begin{align*}
\sigma(g \times \rho, h^g \times \chi)\chi(g) = \sigma(h \times \chi, g \times \rho)\chi(a(\rho)) \chi(u^*\circ v(g))b(h)(v(g)a(\rho))
\end{align*}
Setting $\chi=1=\rho$ gives equation ($\ref{h1}$) and setting $g=1=h$ gives equation (\ref{hh2}). On the other hand setting $g=h$ and $\rho=\chi$ and using equations (\ref{h1}),(\ref{hh2}) we have: $$\sigma(g \times \rho, g \times \rho)\rho(g) = \sigma(g \times \rho, g \times \rho)u(\rho)(v(g))b(g)(a(\rho))$$
This almost implies the last equation (\ref{h4}) but it is not yet clear that $\sigma(g \times \rho, g \times \rho)$ is never zero, since elements of the form $g \times \rho$ are not group-like in $\DG^*$. However, we can argue as follows: 
Apply the $2$-cocycle condition several times  
\begin{align} \label{decsigma}
\sigma&(g \times e_x, h \times e_y) = \sigma((g \times 1)(1 \times e_x), h \times e_y) \nonumber \\
&= \sum_{\myover{x_1x_2x_3=x}{y_1y_2=y}} \sigma^{-1}(g \times 1 ,1 \times e_{x_1})\sigma(1 \times e_{x_2}, (h \times 1) (1 \times e_{y_1}))\sigma(g \times 1, (h \times 1 )(1 \times e_{x_3}e_{y_2 })) \nonumber \\
&= \sum_{\myover{x_1x_2x_3x_4x_5=x}{y_1y_2y_3y_4y_5=y}} \sigma^{-1}(g \times 1, 1 \times e_{x_1} )\sigma^{-1}(1 \times e_{y_1}, h \times 1) \sigma(1 \times e_{x_2}, 1 \times e_{y_2}) \nonumber \\ & \qquad \sigma(1 \times e_{x_3}e_{y_3}, h \times 1)\sigma^{-1}(h \times 1, 1 \times e_{x_4}e_{y_4})\sigma(g \times 1, h \times 1)\sigma(gh \times 1, 1 \times e_{x_5}e_{y_5})
\end{align}
which on characters gives: 
\begin{align*} 
\sigma(g \times \chi, h \times \rho) = \sigma^{-1}(g \times 1, 1 \times \chi )&\alpha_{\sigma}^{-1}(1 \times \rho, h \times 1) \alpha_{\sigma}(\chi,\rho)\lambda_{\sigma}(h,\chi \rho) \\ \cdot &\beta_{\sigma}(g,h)\sigma(gh \times 1, 1 \times \chi \rho) 
\end{align*}
since $\beta_{\sigma} \in \Z^2(G,k^\times)$ and $\alpha_{\sigma} \in \Z^2(\widehat{G},k^\times)$ the only thing left is: 
\begin{align*} 
1&= (\sigma^{-1}*\sigma)(g \times 1, 1 \times \chi) = \sum_{\myover{t \in G}{g^t=g}}\sigma^{-1}(g \times e_t, 1 \times \chi)\sigma(g^t \times 1, 1 \times \chi) \\ 
&= \sigma^{-1}(g \times 1, 1 \times \chi)\sigma(g \times 1, 1 \times \chi)
\end{align*}
Hence elements of the form $\sigma(g \times 1, 1 \times \chi)$ and $\sigma(1 \times \chi, g \times 1)$ are also non zero and it follows that $\sigma(g \times \rho, g \times \rho)$ is also never zero which proves equation (\ref{h4}).  \\
\end{proof}

\subsection{Automorphism Symmetries}~ \\

We have seen in Definition \ref{auto} that a group automorphism $v \in \Aut(G)$ induces a Hopf automorphism in $V \subset \Aut_{Hopf}(\DG)$. We now show that automorphisms of $G$ also naturally induce braided autoequivalences of $\DG\md\mod$. \\  

\begin{proposition}~\\
(i) Consider the subgroup $\cVUnderL:=V\times \{1\}$ of $\Aut_{Hopf}(\DG) \ltimes \Z_L^2(\DG^*)$. For an element $(v,1) \in \cVUnderL$ the corresponding monoidal functor $\Psi(v,1)=(F_v,J^{triv})$ with trivial monoidal structure is given on simple objects by 
$$F_{v}(\ocat^\rho_g)  = \ocat^{v^{-1*}(\rho)}_{v(g)}$$
(ii) Every $\Psi(v,1)$ is braided. \\ 
(iii) Let $\cVTildeL$ be the image of $\cVUnderL$ in $\Out_{Hopf}(\DG) \ltimes \H_L^2(\DG^*)$, then we have $\cVTildeL\cong \Out(G)$.
\label{vcat}
\end{proposition}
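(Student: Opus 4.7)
The plan splits naturally into three parts. For (i), the formula for $F_v$ on simple objects is a direct specialization of Lemma~\ref{lm_dgaction}. Writing $v\in V$ in the matrix form $\bigl(\begin{smallmatrix} v^{-1} & 0 \\ 0 & v \end{smallmatrix}\bigr)$, we have $b=0$, $a=0$, and $u^*$ is the pullback $\rho\mapsto \rho\circ v^{-1}$ on characters, so the formula $F_\phi(\ocat_g^\rho)=\ocat_{a(\rho')v(g)}^{(\rho\circ u^*)\cdot b(g)}$ of Lemma~\ref{lm_dgaction} collapses at once to $\ocat_{v(g)}^{v^{-1*}(\rho)}$, as claimed.

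For (ii), I first note that the trivial cocycle $\sigma=\epsilon\otimes\epsilon$ is lazy, so $(v,1)\in\underline{\Aut}_{mon,L}(\DG\md\mod)$. Checking the braiding axiom then reduces via equation~\eqref{eqn:braid} (with $\sigma_1\otimes\sigma_2 = 1\otimes 1$) to the invariance $(v^\dag\otimes v^\dag)(R)=R$ of the universal $R$-matrix. Since $v^\dag$ corresponds under \eqref{flip} to $\bigl(\begin{smallmatrix} v^* & 0 \\ 0 & (v^{-1})^* \end{smallmatrix}\bigr)$, it sends $e_x\times 1\mapsto e_{v^{-1}(x)}\times 1$ and $1\times x\mapsto 1\times v^{-1}(x)$; applying $v^\dag\otimes v^\dag$ to $R=\sum_{x\in G}(e_x\times 1)\otimes(1\times x)$ and reindexing $y=v^{-1}(x)$ returns $R$. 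I expect no obstacle here beyond this symmetry observation.

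For (iii), since the cocycle component of every element of $\cVUnderL$ is trivial, $\cVTildeL$ is the image of $V$ in $\Out_{Hopf}(\DG)$, and the task reduces to computing $V\cap\Inn(\DG)$. The group-like elements of $\DG$ are exactly the $\chi\times g$ with $\chi\in\widehat{G}$ and $g\in G$. A short computation, using that every $1$-dimensional character of $G$ is conjugation invariant (so $\chi(hxh^{-1})=\chi(x)$ because $\chi$ factors through $G_{ab}$), shows that conjugation by $\chi\times 1$ acts as the identity on $\DG$; meanwhile conjugation by $1\times g$ yields $e_x\times h\mapsto e_{gxg^{-1}}\times ghg^{-1}$, which is precisely the element of $V$ induced by $\mathrm{Inn}(g)\in\Aut(G)$. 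Therefore $V\cap\Inn(\DG)$ is the image of $\mathrm{Inn}(G)\hookrightarrow V$, and $\cVTildeL\cong V/\mathrm{Inn}(G)\cong\Out(G)$. The only minor subtlety I anticipate is the character-invariance observation that kills the $\widehat{G}$-part of the group-likes; the rest is bookkeeping.
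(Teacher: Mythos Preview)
Your proof is correct and essentially matches the paper's own argument, which is extremely terse (it says only ``follows from the above and Lemma~\ref{lm_dgaction}'' for (i),(iii), and ``check equation~(\ref{master})'' for (ii)). For (ii) you take a slightly different but equivalent route: rather than specializing the master equation~(\ref{master}) derived in Lemma~\ref{lmm}, you go back to the braiding condition~(\ref{eqn:braid}) directly and observe that for trivial $\sigma$ it reduces to $(\phi^\dag\otimes\phi^\dag)(R)=R$, which you verify by reindexing. This is cleaner than plugging into~(\ref{master}), and in fact it is exactly the computation that~(\ref{master}) encodes once one sets $a=b=0$, $u^*\!\circ v=\id$, $\sigma=\epsilon\otimes\epsilon$. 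Your treatment of (iii) via the explicit description of the group-likes of $\DG$ and the observation that $\chi\times 1$ acts trivially by conjugation (since $\chi$ is a class function) is precisely what the paper's one-line ``(iv)'' is gesturing at.
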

\begin{proof} 
(i),(iii) Follows from the above and Lemma \ref{lm_dgaction}. \\
(ii) Consider again equation ($\ref{master}$) in the proof of Lemma \ref{lmm}. An element in $\Aut_{Hopf}(\DG) \times \Z^2_L(\DG^*)$ is braided if and only if equation (\ref{master}) is satisfied. For an element $(v,1)$ it is easy to check that its true.  
(iv) The intersection of $\cVUnderL$ with the kernel $\Inn(G)\ltimes \B^2_L(\DG^*)$ is $\Inn(G)$.
\end{proof}   

\begin{example}
For $G=\F_p^n$ we have $V=\GL_n(\F_p)$. 
\end{example}
\begin{example}
The \emph{extraspecial $p$-group} $p_+^{2n+1}$ is a group of order $p^{2n+1}$ generated by elements $x_i,y_i$ for $i\in\{1,2,\ldots n\}$ and the following relations. In particular $2_+^{2+1}=\DD_4$.  
$$x_i^p=y_i^p=1\qquad[x_i,x_j]=[y_i,y_j]=[x_i,y_j]=1,\;for\;i\neq j\qquad [x_i,y_i]=z\in Z(p_+^{2n+1})$$
Then the inner automorphism group is $\Inn(G)\cong \ZZ_p^{2n}$ and the automorphism group is $\Out(G)=\ZZ_{p-1}\ltimes\Sp_{2n}(\F_p)$ for $p\neq 2$ resp. $\Out(G)=\SO_{2n}(\F_2)$ for $p=2$, see \cite{Win72}. \\
\end{example}


\subsection{B-Symmetries}~\\

Now we want to characterize subgroups of $\Aut_{br}(\DG\md\mod)$ corresponding to the lazy induction $\Aut_{mon}(\Vect_G) \to \Aut_{br,L}(\DG\md\mod)$. One fact we need to understand for this is what trivial braided autoequivalences $(1,\beta)$ coming from $\Vect_G$ look like. If the group is abelian, then $\beta$ has to be cohomologically trivial, which implies that the characterization of such elements is easy. On the other hand, if $G$ is not abelian there are non-trivial cocycles $\beta$ leading to non-trivial braided monoidal functors. For this we need the following:

\begin{definition}
	Let $G$ be a finite group. A cohomology class $[\beta]\in \H^2(G,k^\times)$ is called \emph{distinguished} if one of the following equivalent conditions is fulfilled \cite{Higgs87}: \\
(i) The twisted group ring $k_\beta G$ has the same number of irreducible representations as $kG$. Note that $k_\beta G$ for $[\beta]\neq 1$ has no $1$-dimensional representations. \\
(ii) The centers are of equal dimension $\dim Z(k_\beta G)=\dim Z(kG)$. \\
(iii) All conjugacy classes $[x]\subset G$ are \emph{$\beta$-regular}, i.e. for all $g\in\Cent(x)$ we have $\beta(g,x)=\beta(x,g)$. \\
The conditions are clearly independent of the representing $2$-cocycle $\beta$ and the set of distinguished cohomology classes forms a subgroup $\H^2_{dist}(G)$.
\end{definition}
In fact, nontrivial distinguished classes are quite rare and we give in Example \ref{exm_p9_B} a non-abelian group with $p^9$ elements which admits such a class. \\

In the following Proposition we construct $\cBUnderL$ which should be seen as a subset of the functors $\underline{\Aut}_{br}(\DG \md\mod)$. This is of course a large set and we need to identify certain functors. For this reason, as described in the introduction, we consider the quotient $\cBTildeL$, where we identify pairs that differ by by inner Hopf automorphism and exact cocycles. The main property, as shown below, is that up to certain elements this quotient is isomorphic to the group of alternating homomorphisms $G_{ab} \to G_{ab}$. In order to get $\cB_L \subset \Aut_{br}(\DG \md \mod)$ we need to consider the quotient of $\cBTildeL$ by the kernel of $\cBTildeL \to \Aut_{br}(\DG\md\mod)$.

\begin{proposition}\label{lm_BField}~\\ 
(i) The group $B \times \Z^2_{inv}(G)$ is a subgroup of $\Aut_{Hopf}(\DG) \ltimes \Z_L^2(\DG^*)$. An element $(b,\beta)$ corresponds to the monoidal functor $(F_b,J^\beta)$ given by $ F_{b}(\ocat^\rho_g)  = \ocat^{\rho * b(g)}_{g}$ with monoidal structure
\begin{equation*}
\begin{split}
   \ocat^{\rho * b(g)}_{g} \otimes \ocat^{\chi * b(h)}_{h} &\rightarrow F_b(\ocat^\rho_g \otimes \ocat^\chi_h) \\
           (s_m \otimes v) \otimes (r_n \otimes w)  &\mapsto \beta(g_m,h_n) (s_m \otimes v) \otimes (r_n \otimes w)    
\end{split}
\end{equation*}
where $\{s_m\},\{r_n\} \subset G$ are choices of representatives of $G/\mathrm{Cent}(g)$ and $G/\mathrm{Cent}(h)$ respectively and where $g_m=s_mg s_m^{-1}$,$h_n=r_nh r_n^{-1}$. \\   
(ii) The subgroup $\cBUnderL$ of $B \times \Z_{inv}^2(G)$ defined by  
\begin{equation*}
\begin{split}  
\cBUnderL := \{ (b,\beta) \in B \times \Z_{inv}^2(G) \mid b(g)(h) = \frac{\beta(h,g)}{\beta(g,h)} \quad \forall g,h \in G \} 
\end{split}
\end{equation*}
consists of all elements $(b,\beta) \in B \times \Z_{inv}^2(G)$ such that $\Psi(b,\beta)$ is a braided autoequivalence. \\
(iii) Let $B_{alt} \cong \widehat{G}_{ab}\wedge \widehat{G}_{ab}$ be the subgroup of alternating homomorphisms of $B$, i.e. $b\in\Hom(G_{ab},\widehat{G}_{ab})$ with $b(g)(h) = b(h)(g)^{-1}$. Then the following group homomorphism is well-defined and surjective:   
\begin{equation*} 
\begin{split}
\cBUnderL \rightarrow B_{alt}; \quad (b,\beta) \mapsto b  
\end{split}
\end{equation*}
(iv) Let $\cBTildeL$ be the image of $\cBUnderL$ in $\Out_{Hopf}(\DG) \ltimes \H_L^2(\DG^*)$, then we have a central extension
$$1\to \H^2_{dist,inv}(G)\to \cBTildeL\to B_{alt}\to 1$$ \enlargethispage{0.5cm}
where $\H^2_{dist,inv}(G)$ is the cohomology group of conjugation invariant and distinguished cocycles.
\end{proposition}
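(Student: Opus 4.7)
The plan is to process the four parts in order, using Lemma \ref{lm_dgaction} for the action on objects and Lemma \ref{lmm} for the braiding condition; the subtle part is (iv), the cohomological identification of the kernel with $\H^2_{dist,inv}(G)$.

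For (i) and (ii), I would first observe that a conjugation-invariant group $2$-cocycle $\beta \in \Z^2_{inv}(G)$ extends trivially on the $k^G$-side to a $2$-cocycle on $\DG^*$, and conjugation invariance is precisely what makes this extension \emph{lazy} in the sense of (\ref{lazy}). Combined with the Hopf embedding $B \hookrightarrow \Aut_{Hopf}(\DG)$ of Proposition \ref{auto} (so $u=v=\id$, $a=0$), Lemma \ref{lm_dgaction} yields the stated formula $F_b(\ocat_g^\rho) = \ocat_g^{\rho\cdot b(g)}$, and unwinding $\sigma = \sum_{g,h}\beta(g,h)\,(g\times 1)\otimes(h\times 1) \in \DG\otimes\DG$ gives the claimed formula for $J^\beta$ on pure tensors localized in $V_{g_m}\otimes V_{h_n}$. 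For the braiding condition, I plug $(b,\beta)$ into the four necessary equations of Lemma \ref{lmm}: since $\alpha_\beta=\lambda_\beta=1$ and $a=0$, equations (\ref{hh2}), (\ref{h3}), (\ref{h4}) are tautologies, while (\ref{h1}), combined with conjugation invariance (taking $t=g^{-1}$ in $\beta(x,y)=\beta(x^t,y^t)$ to obtain $\beta(g,g^{-1}hg)=\beta(g,h)$), reduces to $b(h)(g) = \beta(g,h)/\beta(h,g)$, which is the defining condition of $\cBUnderL$. Sufficiency additionally requires the mixed form of (\ref{master}) with nontrivial $t, d$, which follows from the decomposition (\ref{decsigma}) of any lazy cocycle into the pieces $\beta_\sigma$, $\alpha_\sigma$, $\lambda_\sigma$.

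For (iii), the relation $b(h)(g) = \beta(g,h)/\beta(h,g)$ directly yields $b(g)(h)b(h)(g) = 1$, so $b \in B_{alt}$, and the map is evidently a group homomorphism under the semidirect product structure. For surjectivity, given $b \in B_{alt}$ I must produce a conjugation-invariant $\beta$ with prescribed skew-symmetrization $b$. Since $b$ factors through $G_{ab}$, the problem reduces to the abelian group $G_{ab}$: picking a polarization (a $\ZZ$-basis of the free part together with a decomposition of the torsion), I set $\beta(g_i, g_j) = b(g_i)(g_j)$ for $i<j$ and trivial otherwise, extended bilinearly. This is a bilinear cocycle on $G_{ab}$ whose skew part recovers $b$, and its pullback to $G$ is automatically in $\Z^2_{inv}(G)$ since it factors through $G_{ab}$.

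The hard part is (iv). The map $(b,\beta)\mapsto b$ descends to $\cBTildeL\to B$ because inner Hopf automorphisms of $\DG$ come from conjugation by group-likes of $\DG$, and a direct computation shows these lie in $V$ (group-likes of the form $c\times 1$ act trivially because a character vanishes on commutators, while group-likes $1\times g$ give ordinary conjugation), hence inner automorphisms are invisible on the $B$-component. Lazy coboundaries also have trivial skew-symmetrization, since $d\eta(g,h)/d\eta(h,g)$ depends only on the images of $g,h$ in $G_{ab}$, where it is manifestly $1$. Thus the image of $\cBTildeL$ in $B$ lies in $B_{alt}$, and surjectivity is already established. The kernel consists of classes $[(1,\beta)]$ with $\beta\in\Z^2_{inv}(G)$ symmetric; symmetry immediately implies $\beta(g,x)=\beta(x,g)$ whenever $g\in\Cent(x)$, so $[\beta]\in\H^2_{dist,inv}(G)$. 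Conversely, for a distinguished conjugation-invariant class I would argue via the polarization of (iii) applied to the induced trivial bicharacter on $G_{ab}$ that after modification by a suitable lazy coboundary any representative can be made symmetric; the distinguished hypothesis is exactly what trivializes the obstruction on each centralizer, and I expect this step to be the most delicate because it requires a compatible choice of boundary across all conjugacy classes simultaneously. Finally, centrality of the extension is automatic: in the semidirect product $\Out_{Hopf}(\DG)\ltimes \H^2_L(\DG^*)$, the $B$-factor acts on $\H^2_L(\DG^*)$ only through its trivial action on the $kG$-part of $\DG$, hence it fixes every class coming from $\Z^2_{inv}(G)$.
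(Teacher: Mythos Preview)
Your (i)--(iii) follow the paper's line closely. One cosmetic difference: for sufficiency in (ii) the paper does not route through the decomposition (\ref{decsigma}) but simply plugs $\sigma(g\times e_x,h\times e_y)=\beta(g,h)\epsilon(e_x)\epsilon(e_y)$ into the master equation (\ref{master}) and observes that it collapses to (\ref{eqn:symb1}). Your descent argument in (iv) via the explicit description of group-likes of $\DG$ is also fine; the paper instead just records that $(\Inn(G)\times \B^2(G))\cap(B\times\Z^2_{inv}(G))=1$.

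The genuine gap is the converse inclusion $\H^2_{dist,inv}(G)\subseteq K$ in (iv). Your polarization idea does not work as stated: polarizing the trivial alternating form on $G_{ab}$ produces the trivial cocycle, not the given distinguished class $[\beta]$, and your assertion that ``after modification by a suitable lazy coboundary any representative can be made symmetric'' is precisely the nontrivial existence statement that needs proof---you have correctly flagged it as delicate but not supplied it. The paper fills this step with an external input: it works with the equivalent form $\beta(g,x)=\beta(gxg^{-1},g)$ of the kernel condition (this is (\ref{eqn:symb1}) with $b=1$, equivalent to your symmetry under conjugation invariance) and invokes \cite{Higgs87} Lemma~2.1(i), which guarantees that any distinguished class admits a representative satisfying $\beta(g,x)\beta(gx,g^{-1})=\beta(g,g^{-1})$ for all $g,x$. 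Multiplying this by the cocycle identity $d\beta(gx,g^{-1},g)=1$ (using $\beta(gx,1)=1$) then yields $\beta(g,x)=\beta(gxg^{-1},g)$ directly, so $(1,\beta)\in\cBUnderL$. Without Higgs's lemma or an equivalent explicit construction, your argument for this inclusion remains incomplete.
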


\noindent
Before we proceed with the proof, we give some examples: 
\begin{example}\label{exm_Fp_B}
For $G=\F_p^n$ we have $B=\hat{G}\otimes \hat{G} \simeq \F_p^{n\times n}$ the additive group of $n\times n$-matrices and $B_{alt}=\hat{G}\wedge \hat{G} \simeq \F_p^{\binom{n}{2}}$ the additive group of $n\times n$-matrices that are skew-symmetric and have zero diagonal entries. Note that the second condition does not follow from the first for $p=2$. For an abelian group there are no distinguished $2$-cohomology-classes, hence $\cB \simeq \cBTildeL \simeq B_{alt}$. 
\end{example}
\begin{example}\label{exm_D4_B}
For $G=\DD_4=\langle x,y \mid x^2=y^2=(xy)^4=1 \rangle$ we have $G_{ab}=\langle \bar{x},\bar{y}\rangle\cong \ZZ_2^2$, $B=\Hom(G_{ab},\hat{G}_{ab})=\ZZ_2^{2\times 2}$ and $B_{alt}=\{1,b\} \cong \ZZ_2$ with $b(\bar{x})(\bar{y})=b(\bar{y})(\bar{x})=-1$. 
It is known that $\H^2(\DD_4,k^\times)=\ZZ_2=\{[1],[\alpha]\}$ and that the non-trivial $2$-cocycles in the class $[\alpha]$ have a non-trivial restriction to the abelian subgroups $\langle x,z\rangle,\langle y,z\rangle\cong\ZZ_2^2$ of $G$. Especially $[\alpha]$ is not a distinguished $2$-cohomology class. By definition of $\cBUnderL$: 
$$\cBUnderL=\{(1,sym),\;(b,\beta\cdot sym)\}$$
where $\beta$ is the pullback of any nontrivial $2$-cocycle in $G_{ab}$ with
$\beta(x,y)\beta(y,x)^{-1}=-1$ and $sym$ denotes any symmetric $2$-cocycles.
Especially $[\beta]=[1]$ as one checks on the abelian subgroups and thus by
definition
$$\cBTildeL=\{(1,[1]),\;(b,[1])\}\cong \ZZ_2$$
However, these $(1,1)$ and $(b,\beta)$, which are pull-backs of two different braided autoequivalences on $G_{ab}$, give rise to the \emph{same} braided equivalence up to monoidal isomorphisms on $G$. Especially in this case we have a \emph{non-injective} homomorphism.
$$\cBTildeL\to \Aut_{mon}(\DG\md\mod)$$   
\end{example}
More generally for the examples $G=p_+^{2n+1}$ we have $B,B_{alt}$ as for the abelian group $\F_p^{2n}$, but (presumably) all braided autoequivalences in $\cBUnderL(\F_p^{2n})$ pull back to a single trivial braided autoequivalence on $G$. \\ 

It tempting to ask if the kernel of $\cBTildeL\to \Aut_{mon}(\DG\md\mod)$ consist of those $(b,\beta)$ for which $[\beta]=[1]$ i.e. if the remaining non-injectivity is controlled by the non-injectivity of the pullback $\H^2(G_{ab})\to \H^2(G)$. \\ 

We give an example where $\cBTildeL\to B_{alt}$ is not injective. This gives us a new 'kind' of a braided autoequivalence $(1,\beta)$ that would be trivial in the abelian case:

\begin{example}\label{exm_p9_B}
	In \cite{Higgs87} p. 277 a group $G$ of order $p^9$ with $\H^2_{dist}(G)=\ZZ_p$ is constructed as follow: We start with the group $\tilde{G}$ of order $p^{10}$ generated by $x_1,x_2,x_3,x_4$ of order $p$, all commutators $[x_i,x_j],i\neq j$ nontrivial of order 
	$p$ and 
	central. Then $\widetilde{G}$ is a central extension of $G:=\tilde{G}/\langle s\rangle$ where $s:=[x_1,x_2][x_3,x_4]$. This central extension corresponds to a class of distinguished $2$-cocycles $\langle\sigma\rangle=\ZZ_p=\H_{dist}^2(G)=\H^2(G)$. This is a 
	consequence of the fact that $s$ cannot be written as a single commutator. Further, we can find a conjugation invariant representative, because there is a conjugation invariant section $G \to \widetilde{G}$.  
	
	The conjugation invariant distinguished $2$-cocycle $\beta$ corresponds to a braided equivalence $(\id,J^\beta)$ trivial on objects. From $G_{ab}\cong \ZZ_p^4$, hence $B_{alt}=\ZZ_p^4\wedge\ZZ_p^4=\ZZ_p^6$ we have a
	 central extension 
	$$1\to \ZZ_p\to \cBTildeL\to \ZZ_p^6\to 1$$ 
	In fact, we assume that the sequence splits and the braided autoequivalence $(\id,J^\beta)$ is the only nontrivial generator of the image $\Psi(\cBTildeL)\subset\Aut_{br}(\DG\md\mod)$, since the pullback $\H^2(G_{ab})\to \H^2(G)$ is trivial. \\
\end{example}

\begin{proof}[Proof of Lemma \ref{lm_BField}]~ 
(i): Let us show that $B$ acts trivially on $\Z^2(G,k^\times)$, hence also on $\Z_{inv}^2(G,k^\times)$: 
\begin{align*}
b.\beta &=  \sum_{x,y,g,h}\epsilon(e_y)\epsilon(e_h)\beta(x,g)(e_x*b^*(y) \times y) \otimes (e_g*b^*(h) \times h)  \\
&= \sum_{x,g} \beta(x,g)(e_x \times 1) \otimes (e_g \times 1)  = \beta
\end{align*}   
For the action on simple $\DG$-modules use Lemma \ref{lm_dgaction}. \\
(ii): Assume $\Psi(b,\beta)$ is braided then according to Lemma \ref{nec} we get for $v=\id$:  
\begin{equation}
b(g)(h) = \beta(h,g)\beta(hgh^{-1},h)^{-1} \quad \forall g,h \in G
\label{eqn:symb1}
\end{equation}
Because $\beta$ is closed we have: $1 = d\beta(h,gh^{-1},h) =\frac{ \beta(gh^{-1},h)\beta(h,g)}{\beta(hgh^{-1},h)\beta(h,gh^{-1})}$ and therefore: 
\begin{align*} 
\quad\quad b(g)(h) = \beta(h,g)\beta(hgh^{-1},h)^{-1} = \beta^{-1}(gh^{-1},h)\beta(h,gh^{-1})
\end{align*}
\begin{align}
\Leftrightarrow b(g)(h)= b(g)(h)b(h)(h) = b(gh)(h) = \beta^{-1}(g,h)\beta(h,g) \label{eqn:symb2}
\end{align}
In the proof of Lemma \ref{nec} we also have shown that $\Psi(b,\beta)$ is braided if and only if (\ref{master}) holds. In this case where $\sigma(g \times e_x, h \times e_y) = \beta(g,h)\epsilon(e_x)\epsilon(e_y)$ it reduces to $(\ref{eqn:symb1})$, hence $\Psi(b,\beta)$ is braided. Since the product of braided autoequivalences is braided this also shows that $\cBUnderL$ is in fact a subgroup of $B \times \Z_{inv}^2(G,k^\times)$. \\ 

(iii) By definition of $\cBUnderL$ we have $b\in B_{alt}$. We now show surjectivity: Let $G_{ab}= G/{[G,G]}$ be the abelianization of $G$ and $\hat{\beta_b} \in Z^2(G_{ab})$ an abelian $2$-cocycle defined uniquely up to cohomology by $b(g)(h) = \hat{\beta_b}(h,g)\hat{\beta_b}(hgh^{-1},h)^{-1} = \hat{\beta_b}(h,g)\hat{\beta_b}(g,h)^{-1}$ for $g,h \in G_{ab}$. Further, we have a canonical surjective homomorphism $\iota:G \rightarrow G_{ab}$ which induces a pullback $\iota^*:Z^2(G_{ab}) \rightarrow \Z_{inv}^2(G,k^\times)$, hence we define $\beta_b := \iota^*\hat{\beta_b}$. \\  

(iv) By (iii) the map $(b,\beta)\mapsto b$ is a group homomorphism $\cBUnderL\to B_{alt}$ and this factorizes to a group homomorphism $\cBTildeL\to B_{alt}$, since $(\Inn(G)\times \B^2(G)) \cap (B\times \Z_{inv}^2(G,k^\times))=1$. The kernel of this homomorphism consists of all $(1,[\beta])\in \cBTildeL$, hence all $(1,[\beta])$ where $[\beta]$ has at least one representative $\beta$ with $\beta(g,x)=\beta(gxg^{-1},g)$ for all $g,x\in G$. We denote this kernel by $K$ and note that it is central in $\cBUnderL$. 

It remains to show $K=\H^2_{dist}(G)$: Whenever $[\beta]\in K$ then there exists a representative $\beta$ with $\beta(g,x)=\beta(gxg^{-1},g)$ for all $g,x\in G$, in particular for any elements $g\in \Cent(x)$, which implies any conjugacy class $[x]$ is $\beta$-regular and thus $[\beta]\in \H^2_{dist}(G)$. For the other direction we need a specific choice of representative: Suppose $[\beta]\in \H^2_{dist}(G)$ and thus all $x$ are $\beta$-regular; by \cite{Higgs87} Lm. 2.1(i) there exists a representative $\beta$ with 
$$\frac{\beta(g,x)\beta(gx,g^{-1})}{\beta(g,g^{-1})}=1$$
for all $\beta$-regular $x$ (i.e. here all $x$) and all $g$. An easy cohomology calculation shows indeed
$$\frac{\beta(g,x)}{\beta(gxg^{-1},g)}
=\frac{\beta(g,x)}{\beta(gxg^{-1},g)}
\cdot \frac{\beta(gx,g^{-1})\beta(gxg^{-1},g)}{\beta(gx,1)\beta(g,g^{-1})}=1$$
hence $(1,\beta)\in \cBUnderL$ by equation (\ref{eqn:symb1}). \\
\end{proof} 

\subsection{E-Symmetries}~\\

It is now natural to construct a subgroup of $E \times \Z^2_c(k^G)$ in a similar fashion. This construction corresponds to the lazy induction $\Aut_{mon}(\Rep(G)) \to \Aut_{br}(\DG\md\mod)$. Unlike in the case of $B$-Symmetries, we do not need to consider some sort of distinguished cocycles. As we will see, being braided for elements of the form $(1,\alpha)$ already implies that the corresponding braided functor is trivial (up to equivalence). \\

In the following Proposition we construct $\cEUnderL$ which, as in the case of $B$-Symmetries, should be thought of as a collection of functors with no equivalence relation. Identifying pairs that differ by inner Hopf automorphisms and exact cocycles gives us $\cETildeL$. As shown below, the main statement is that this quotient is isomorphic to the group of alternating homomorphisms $Z(G) \to Z(G)$. In order to get the subgroup $\cEL \subset \Aut_{br}(\DG\md\mod)$ we have to take the quotient of $\cETildeL$ by the kernel $\cETildeL \to \Aut_{br}(\DG\md\mod)$.   \\    

\begin{proposition}~\\
(i) The group $E \times \Z^2_c(k^G)$ is a subgroup of $\Aut_{Hopf}(\DG) \ltimes \Z^2_L(\DG^*)$. An element $(a,\alpha)$ corresponds to the monoidal functor $\Psi(a,\alpha)=(F_a,J^\alpha)$ given on simple objects by $F_{a}(\ocat^\rho_g)  = \ocat^{\rho}_{a(\rho')g}$, with the monoidal structure
\begin{align*}
\ocat^{\rho}_{a(\rho')g}\otimes \ocat^{\rho}_{a(\chi')h}
&\to F_a(\ocat^\rho_g\otimes \ocat^\chi_h) \\
(s_m \otimes v)\otimes(r_n \otimes w) 
& \mapsto \sum_{\myover{i,j; x \in \mathrm{Cent}(g)}{ y \in \mathrm{Cent}(h)}} \alpha(e_{s_i x s_m^{-1}},e_{r_jyr_n^{-1}}) [s_i \otimes \rho(x)(v)] \otimes [r_j \otimes \chi(y)(w)]    
\end{align*}
where $\rho',\chi': Z(G) \rightarrow k^{*}$ are the $1$-dimensional characters determined by the restrictions of $\rho,\chi$ to $Z(G)$. By $\{s_i\},\{r_j\} \subset G$ we denote the choices of representatives of $G/\mathrm{Cent}(g)$ and $G/\mathrm{Cent}(h)$ respectively.
(ii) The subgroup $\cEUnderL\subset E \times Z^2_c(k^G)$ defined by  
\begin{align*}
\cEUnderL := \{ (a,\alpha) \in E \times & \Z^2_c(k^G) \mid \forall g,t,h \in G: 
\alpha(e_t,e_{ght}) = \alpha(e_t,e_{hg^{-1}t}) \\
&\alpha(e_t,e_h) = \sum_{x,y \in Z(G)} \alpha(e_{hy^{-1}},e_{tx^{-1}})e_y(a(e_x)) \}
\end{align*} 
consists of all elements $(a,\alpha) \in E \times \Z^2_c(k^G)$ such that the monoidal autoequivalence $\Psi(a,\alpha)$ is braided. \\
(iii) Let $E_{alt}\cong Z(G)\wedge_{\ZZ} Z(G)$ be the subgroup of alternating homomorphisms in $E=\Hom(\widehat{Z(G)},Z(G))=Z(G)\otimes_{\ZZ} Z(G)$, i.e. the set of homomorphisms $a:\widehat{Z(G)}\to Z(G)$ with $\rho(a(\chi)) = \chi(a(\rho))^{-1}$ and $\chi(a(\chi))$ for all $\chi,\rho \in \widehat{Z(G)}$. Then the following group homomorphism is well-defined and surjective:
\begin{equation*} 
\begin{split}
\cEUnderL &\rightarrow E_{alt},\quad (a,\alpha) \mapsto a  
\end{split}
\end{equation*} 
(iv) Let $\cETildeL$ be the image of $\cEUnderL$ in $\Out_{Hopf}(\DG)
\ltimes \H_L^2(\DG^*)$, then the previous group homomorphism factorizes to an
isomorphism
$$\cETildeL\cong E_{alt}$$
For each $a\in E_{alt}$ we have a representative functor $\Psi(a,\alpha)=(F_a,J^\alpha)$ for a certain $\alpha$ obtained by pull-back from the center of $G$. More precisely, the functor is given by $F_{a}(\ocat^\rho_g)  = \ocat^{\rho}_{a(\rho')g}$ and the monoidal structure given by a scalar
\begin{align*}
\ocat^{\rho}_{a(\rho')g}\otimes \ocat^{\rho}_{a(\chi')h}
&\to F_a(\ocat^\rho_g\otimes \ocat^\chi_h) \\
m\otimes n &\mapsto \alpha'(\rho',\chi')\cdot (m\otimes n)  
\end{align*}
where $\alpha'\in \Z^2(\widehat{Z(G)})$ is any $2$-cocycle in the cohomology class associated to the alternating form $a\in E_{alt}$ on the abelian group $\widehat{Z(G)}$.
\label{efield}
\end{proposition}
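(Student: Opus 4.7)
The plan is to mirror the structure of the proof of Proposition \ref{lm_BField} for the B-symmetries, adapting each step to the E-case.

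For part (i), I would first verify that $E$ acts trivially on $\mathrm{Z}^2_c(k^G) \subset \mathrm{Z}^2_L(\DG^*)$, so that the semi-direct product reduces to a direct product. The computation is analogous to the one showing $B$ acts trivially on $\mathrm{Z}^2_{\mathrm{inv}}(G,k^\times)$ and uses that an element of $E$ has $u=v=\mathrm{id}$ and $b=0$. The explicit formula for $F_a$ on simple objects follows immediately from Lemma \ref{lm_dgaction} specialized to this case: only the grading shifts by $a(\rho')$, where $\rho'$ is the restriction of $\rho$ to $Z(G)$. For the monoidal structure $J^\alpha$, one writes $\alpha \in \mathrm{Z}^2_c(k^G)$ as an element of $\DG \otimes \DG$ supported on $(1 \times e_x) \otimes (1 \times e_y)$ with $x, y \in Z(G)$, and applies the action formula (\ref{dgd}). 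The sum over coset representatives $s_i, r_j$ and centralizer elements $x, y$ arises because the action of $(1 \times z)$ on a homogeneous component $V_{\gamma}$ uses the rewriting $z s_m \in s_i \Cent(g)$ and pulls the centralizer element through $\rho$.

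For part (ii), I would specialize Lemma \ref{nec} to $\phi = a \in E$ and $\sigma = \alpha \in \mathrm{Z}^2_c(k^G)$. Since $\beta_\sigma = 1$, $\lambda_\sigma = 1$, equations (\ref{h1}) and (\ref{h3}) hold automatically; equation (\ref{h4}) is trivial. The remaining equation (\ref{hh2}) becomes $\alpha(\rho,\chi) = \alpha(\chi,\rho)\chi(a(\rho))$. To get the full defining conditions I would return to the master equation (\ref{master}), substitute $u=v=\mathrm{id}$ and $b=0$, and restrict the resulting identity to the support of $\alpha$ (both tensor factors of the form $1 \times e_x$ with $x \in Z(G)$). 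This reduction yields exactly the conjugation invariance $\alpha(e_t, e_{ght}) = \alpha(e_t, e_{hg^{-1}t})$ and the reciprocity identity $\alpha(e_t, e_h) = \sum_{x,y \in Z(G)} \alpha(e_{hy^{-1}}, e_{tx^{-1}}) e_y(a(e_x))$, which are both necessary and, by tracing the reduction backwards, also sufficient for (\ref{master}) to hold.

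For part (iii), I would restrict the reciprocity condition to characters of $Z(G)$: pairing with $\chi, \rho \in \widehat{Z(G)}$ and using (\ref{hh2}) gives $\alpha(\rho,\chi)\alpha(\chi,\rho)^{-1} = \chi(a(\rho))$. Swapping the roles of $\chi$ and $\rho$ forces $\chi(a(\rho))\rho(a(\chi)) = 1$, and setting $\chi = \rho$ yields $\chi(a(\chi)) = 1$, so $a \in E_{alt}$. That the assignment $(a,\alpha) \mapsto a$ is a group homomorphism follows from the fact that the projection $E \times \mathrm{Z}^2_c(k^G) \to E$ is one. For surjectivity, given alternating $a$, the cohomology class in $\mathrm{H}^2(\widehat{Z(G)}, k^\times)$ associated to the alternating form is realized by some $\alpha' \in \mathrm{Z}^2(\widehat{Z(G)})$; pulling back $\alpha'$ along the restriction $k^G \twoheadrightarrow k^{Z(G)}$ gives $\alpha \in \mathrm{Z}^2_c(k^G)$, and one checks that $(a,\alpha)$ satisfies the two defining conditions.

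For part (iv), I would descend the map to the quotient $\widetilde{\Aut}_{mon,L}(\DG\md\mod)$ and establish injectivity. If $a = 1$ and $(1,\alpha) \in \cEUnderL$, the reciprocity condition forces $\alpha$ (restricted to $\widehat{Z(G)}$) to be symmetric up to coboundary. Since the abelian group $\widehat{Z(G)}$ has no distinguished classes beyond the alternating ones, a symmetric $2$-cocycle on it is a coboundary, hence $\alpha$ is trivial in $\mathrm{H}^2_L(\DG^*)$ modulo a coboundary and possibly an inner Hopf automorphism. This argument is the key difference from the $B$-case, where distinguished classes on $G$ could be nontrivial. The explicit representative functor in the second half of (iv) is obtained by taking $\alpha$ to be the pullback of $\alpha'$: then $\alpha$ is concentrated on $Z(G) \times Z(G)$, the summation in the monoidal structure collapses because $\rho$ acts on $V$ via the scalar $\rho'$ on central elements, and the formula reduces to multiplication by $\alpha'(\rho',\chi')$. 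The main obstacle is the injectivity step in (iv): one must carefully verify that the absence of nontrivial distinguished classes on the abelian group $\widehat{Z(G)}$ genuinely suffices, and that the inner-automorphism ambiguity does not interact with the alternating form in an unexpected way.
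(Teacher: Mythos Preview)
Your proposal is correct and follows essentially the same route as the paper. One small sharpening for part (iv): when $a=1$ the reciprocity condition yields $\alpha(e_t,e_h)=\alpha(e_h,e_t)$ on the nose (not merely up to coboundary), and the paper then invokes \cite{LP15} Cor.~3.5 to conclude that such a symmetric lazy cocycle in $\Z^2_c(k^G)$ is already cohomologically trivial; your worry about the inner-automorphism ambiguity is disposed of by the observation $(\Inn(G)\times \B_L^2(k^G))\cap (E\times \Z_c^2(k^G))=1$, so factorization to $\cETildeL$ is immediate.
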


Before we proceed to the proof we give some examples:

\begin{example}\label{exm_D4_E}
For $G=\DD_4=\langle x,y\rangle,x^2=y^2=(xy)^4=1$ we have $Z(G)=\langle [x,y]\rangle\cong \ZZ_2$ and hence $E=\Hom(\widehat{Z(G)},Z(G))=\ZZ_2$ and $E_{alt}=1$. More generally for the examples $G=p_+^{2n+1}$ we have $E=\ZZ_p\otimes\ZZ_p=\ZZ_p$ and $E_{alt}=\ZZ_p\wedge\ZZ_p=1$ and hence $\cETildeL=1$.
\end{example}
\begin{example}\label{exm_p9_P}
	For the group of order $p^9$ in Example \ref{exm_p9_B} we have $Z(G)=\ZZ_p^5$ generated by all commutators 
	$[x_i,x_j],i\neq j$ modulo the relation $[x_1,x_2][x_3,x_4]$. Hence $E_{alt}=\ZZ_p^{5}\wedge \ZZ_p^5\cong \ZZ_p^{\binom{5}{2}}=\ZZ_p^{10}$ and 
	respectively $\cETildeL=\ZZ_p^{10}$.
\end{example}
 

\begin{proof}[Proof of Proposition \ref{efield}]~\\
\noindent	
(i) Let us show that $E$ acts trivially on $Z^2_c(k^G)$. For $a \in E$ and $\alpha \in Z^2_c(k^G)$: 
\begin{align*}
a.\alpha &= \sum_{x_1,x_2,y,z_1,z_2,w}\alpha(e_y,e_w)(e_{x_1} \times a^*(e_{x_2})y) \otimes (e_{z_1} \times a^*(e_{z_2}) w)\\ 
&= \sum_{y,w}\alpha(e_y,e_w)(1 \times y) \otimes (1 \times w) = \alpha
\end{align*}
For the action on simple $\DG$-modules use Lemma \ref{lm_dgaction}. The rest is straight forward. \\
(ii) Let $(a,\alpha) \in E \times \Z^2_c(k^G)$. Then we use again the fact that $\Psi(a,\alpha)$ is braided if and only if equation (\ref{master}) holds. In this case we have $\sigma(g \times e_x, h \times e_y)=\alpha(e_x,e_y)$ and $\Psi(a,\alpha)$ if braided iff for all $g,t,d \in G:$ 
\begin{equation} 
\begin{split}
\alpha(e_t,e_{gd}) &= \sum_{h,x \in Z(G)}\alpha(e_{dh^{-1}(t^{-1}g^{-1}t)},e_{tx^{-1}})e_{h}(a(e_x))                 
\end{split}
\label{eqn:eq_monoidalStructure_E}
\end{equation}
Setting $g=1$ gives us the second defining equation of $\cEUnderL$. Further, (\ref{eqn:eq_monoidalStructure_E}) is equivalent to 
\begin{align} 
\alpha(e_t,e_{gdt^{-1}gt}) &= \sum_{h,x \in Z(G)}\alpha(e_{dh^{-1}},e_{tx^{-1}})e_{h}(a(e_x))                 
\label{eq_monoidalStructure_E2}
\end{align} 
\noindent
and therefore: $\alpha(e_t,e_{gdt^{-1}gt}) = \alpha(e_t,e_d)$ which is equivalent to the first defining equation of $\cEUnderL$. Since the product of braided autoequivalences is braided this also shows that $\cEUnderL$ is in fact a subgroup of $E \times \Z^2_c(k^G)$.\\
\noindent
(iii) We first note that by equation (\ref{hh2}) for $u=\id$ we have $a \in E_{alt}$. We now show surjectivity: Since $Z(G)$ is an abelian group there exists a unique (up to cohomology) $2$-cocycle $\alpha \in \H^2(\widehat{Z(G)})$ with can be pulled back to a $2$-cocycle in $\Z^2_c(k^G)$. Then $(a,\alpha)$ is in $\cEUnderL$ which proves surjectivity.  \\
\noindent
(iv) Before we show the isomorphism we obtain the description of the explicit representatives: In (iii) we constructed preimages $(a,\alpha)$ of each $a\in E_{alt}$ by pulling back a $2$-cocycle $\alpha'\in \Z^2(\widehat{Z(G)})$ in the cohomology class associated to $a$. We now apply the explicit formula in (i) and use that $\alpha$ is only nonzero on $e_g,e_h$ with $g,h\in Z(G)$: Hence we have only nonzero summands for $s_m^{-1}s_i\in Z(G)$, hence $i=m$ and similarly $j=n$. Moreover, $\rho,\chi$ reduce on $Z(G)$ to one dimensional representations $\rho',\chi'$. Evaluating the resulting sum we get the asserted form. \\
\noindent
Next we note that the group homomorphism $\cEUnderL\to E_{alt}$ in (iii) factorizes to a group homomorphism $\cETildeL\to E_{alt}$, since $(\Inn(G)\times \B_L^2(k^G)) \cap (E\times \Z_c^2(k^G))=1$. The kernel of this homomorphism consists of all $(1,[\alpha])\in \cETildeL$, i.e. all classes $[\alpha]$ such that there exists a lazy representative $\alpha\in \Z^2_c(k^G)$. Then, by definition of $\cEUnderL$, the following is fulfilled for a pair $(1,\alpha) \in \cEUnderL$:
$$\alpha(e_t,e_{ght}) = \alpha(e_t,e_{hg^{-1}t}) \quad \alpha(e_g,e_h) =\alpha(e_{h},e_{g})$$
\noindent
By \cite{LP15} Cor. 3.5 a symmetric lazy cocycle $\alpha\in\Z^2_c(k^G)$ is already cohomologically trivial.\\
\end{proof}


\subsection{Partial E-M Dualizations}~\\

Recall that $R$ was the set of triples $(H,C,\delta)$ such that $G = H \times C$ and $\delta: kC \to k^C$ a Hopf isomorphism. Corresponding to that triple there is unique Hopf automorphism of $DG$ that we called $r_{H,C,\delta}$ that exchanges the $C$ and $\hat{C}$. We will identify the triple $(H,C,\delta)$ with the corresponding automorphism $r=r_{H,C,\delta}$ and the other way around. \\

\begin{proposition}~\\
(i) Consider the subset $R\times \P_c(kG,k^G)$ in $\Aut_{Hopf}(\DG)\ltimes \Z^2_L(\DG^*)$. An element $(r,\lambda)$ corresponds to the monoidal functor $\Psi(r,\lambda)=(F_r,J^\lambda)$ given on simple objects by $F_{r}(\ocat^{\rho_H\rho_C}_{hc})  = \ocat^{\rho_H\delta(c)}_{\delta^{-1}(\rho_C)h}$, where we decompose any group element and representation according to the choice $G= H \times C$ into $h\in H,c\in C$ resp. $\rho_H\in \Cent_H(h)\md\mod,\rho_C\in \Cent_C(c)\md\mod$. The monoidal structure $J^\lambda$ is given by
\begin{equation*}
\begin{split}
   \ocat^{\rho_H\delta(c)}_{\delta^{-1}(\rho_C)h} \otimes \ocat^{\chi_H\delta(c')}_{\delta^{-1}(\chi_C)h'} &\rightarrow F_r(\ocat^{\rho_H\rho_C}_{hc} \otimes \ocat^{\chi_H\chi_C}_{h'c'}) \\
           (s_m \otimes v) \otimes (r_n \otimes w)  &\mapsto \sum_{\myover{i }{ z \in \mathrm{Cent}(hc)}}\lambda((h'c')_n,e_{s_izs_m^{-1}}) [s_i \otimes \rho(z)(v)] \otimes (r_n \otimes w)    
\end{split}
\end{equation*}
where $\{s_m\},\{r_n\} \subset G$ are choices of representatives of $G/\mathrm{Cent}(g)$ and $G/\mathrm{Cent}(h)$ respectively and where $(h'c')_n=r_nh'c'r_n^{-1}$. \\ 
(ii) Define the following set uniquely parametrized by decompositions $G=H\times C$:
\begin{align*}  
\cRUnderL := \{ &(r_{H,C,\delta},\lambda) \in R \times \P_c(kG,k^G) \mid \lambda(hc,e_{h'c'}) = \delta_{c,c'}\epsilon(h)\epsilon(e_{h'}) \} 
\end{align*}
Then $\Psi(r_{H,C,\delta},\lambda)$ is a braided autoequivalence iff $(r_{H,C,\delta},\lambda) \in \cRUnderL$. \\  
(iii) For $(r_{H,C,\delta},\lambda) \in \cRUnderL$ the monoidal structure of $\Psi(r_{H,C,\delta},\lambda)$ simplifies:
\begin{align*}
   \ocat^{\rho_H\delta(c)}_{\delta^{-1}(\rho_C)h} \otimes \ocat^{\chi_H\delta(c')}_{\delta^{-1}(\chi_C)h'} &\rightarrow F_r(\ocat^{\rho_H\rho_C}_{hc} \otimes \ocat^{\chi_H\chi_C}_{h'c'}) \\
           m\otimes n  &\mapsto \rho_C(c')\cdot(m\otimes n)    
\end{align*}
\label{pcat}
\end{proposition}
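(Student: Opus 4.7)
The plan is to establish (i)--(iii) in sequence, reducing each to previously developed machinery: the action on simple objects comes from Lemma \ref{lm_dgaction}, the monoidal structure is the general formula for $J^\sigma$ specialized to $\sigma_\lambda$, the braiding condition is controlled by Lemma \ref{lmm}, and part (iii) is a direct simplification using the rigidity imposed by the constraint on $\lambda$.

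For (i), I would first read off the matrix entries of $r_{H,C,\delta}$ in the notation of Proposition \ref{auto}: under the decomposition $k^G = k^H \otimes k^C$ and $kG = kH \otimes kC$, one has $u|_{k^H} = \id$, $v|_{kH} = \id$, $b|_{kC} = \delta$, and $a|_{k^C} = \delta^{-1}$, with the remaining blocks factoring through counit--unit maps. Since $C \subset Z(G)$, every simple module splits as $\rho = \rho_H \otimes \rho_C$ with $\rho_C \in \widehat{C}$, the conjugacy class $[hc]$ equals $[h]\times\{c\}$, and $\rho'|_C = \rho_C$. Substituting into the formula of Lemma \ref{lm_dgaction} yields directly $F_r(\ocat^{\rho_H\rho_C}_{hc}) = \ocat^{\rho_H \delta(c)}_{\delta^{-1}(\rho_C) h}$. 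The explicit monoidal structure is obtained by plugging $\sigma_\lambda(g \times e_x, h \times e_y) = \lambda(g, e_y)\epsilon(e_x)$ into $J^\sigma_{M,N}(m \otimes n) = \sigma_1.m \otimes \sigma_2.n$ and expanding the action of the legs of $\sigma_\lambda$ on each coset component via formula (\ref{dgd}).

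For (ii), I would apply Lemma \ref{lmm}. The extracted cocycles $\beta_{\sigma_\lambda}$ and $\alpha_{\sigma_\lambda}$ are trivial because $\sigma_\lambda$ vanishes on the pure $kG \otimes kG$ and $\widehat{G} \otimes \widehat{G}$ parts, so equations (\ref{h1}) and (\ref{hh2}) collapse to the tautologies $b(h)(v(g))=1$ and $u(\chi)(a(\rho))=1$: indeed $b$ lands in $\widehat{C}$ while $v$ projects onto $H$, and $a$ lands in $C$ while $u$ projects onto $\widehat{H}$. The substantive content lies in equation (\ref{h3}) and in the full master identity (\ref{master}): after decomposition along $H\times C$, equation (\ref{h3}) forces $\lambda$ to pair only the $C$-component of its first argument with the $\widehat{C}$-component of its second, and (\ref{master}) propagates this restriction to all of $kG \otimes k^G$, producing precisely the defining equation of $\cRUnderL$. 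The converse direction — that every such $\lambda$ does yield a braided functor — is then verified by direct substitution into (\ref{master}), again using that the only nonvanishing blocks of $a,b$ live on the $C$-factor.

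Part (iii) is a bookkeeping simplification of the formula from (i). Writing $h = h_H c$, $h' = h'_H c'$ and choosing coset representatives $s_m, s_i \in H$ of $G/\Cent_G(hc)$ (permissible since $\Cent_G(hc) = \Cent_H(h)\times C$), decompose $s_i z s_m^{-1} = (s_i z_H s_m^{-1})\, z_C$ for $z = z_H z_C \in \Cent_H(h)\times C$, and use $(h'c')_n = h'_n c'$ by centrality of $C$ in $G$. Substituting the explicit $\lambda$ gives $\lambda(h'_n c', e_{s_i z s_m^{-1}}) = \delta_{c',z_C}\, \delta_{s_i z_H,\, s_m}$, so that the sum collapses to the single term $z = c'$, $i = m$; on this term $\rho(c') = \rho_C(c')\cdot\id$ acts as the scalar $\rho_C(c')$, yielding the stated formula $m \otimes n \mapsto \rho_C(c')\cdot(m\otimes n)$. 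The main technical obstacle across all three parts is identifying the block structure of $r_{H,C,\delta}$ accurately under the product decomposition; once this is fixed, the remaining computations are mechanical.
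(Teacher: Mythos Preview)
Your plan for (i) and (iii) matches the paper exactly: Lemma \ref{lm_dgaction} for the action on simples, and for (iii) the collapse of the sum via $\lambda(h'_nc',e_{s_iz_Hs_m^{-1}z_C})=\delta_{c',z_C}\delta_{s_iz_H,s_m}$ forcing $i=m$ and $z=c'$, exactly as the paper indicates.

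For (ii) your route differs from the paper's. You propose to filter through the necessary conditions (\ref{h1})--(\ref{h4}) of Lemma \ref{lmm}, observe that (\ref{h1}) and (\ref{hh2}) are vacuous for $r_{H,C,\delta}$, and then invoke (\ref{h3}) together with the master identity (\ref{master}) to pin down $\lambda$. The paper instead bypasses (\ref{h1})--(\ref{h4}) entirely: it specializes the braiding condition (\ref{eqn:braid}) directly to $\phi=r_{H,C,\delta}$ and $\sigma=\sigma_\lambda$, carries out an explicit computation of both sides (tracking the $H$- and $C$-components of $\phi^*(R)$), and reads off the single scalar identity
\[
\delta_{y,w}\,\lambda(y^{-1}xy,e_z)\;=\;\delta_{z_H,1}\,\lambda(w,e_{yw_H^{-1}})\,\delta(x_C)(\delta^{-1}(e_{z_C}))
\]
for all $x,y,w,z\in G$, which is then shown to be equivalent to $\lambda(hc,e_{h'c'})=\delta(c)(\delta^{-1}(e_{c'}))\epsilon(h)\epsilon(e_{h'})$. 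Your detour is not incorrect, but it does not shorten the argument: equation (\ref{h3}) only sees $\lambda$ evaluated on characters $\chi\in\widehat{G}$, which need not determine a central pairing completely, so the substantive step --- your phrase ``(\ref{master}) propagates this restriction'' --- is precisely the explicit computation the paper performs. In effect you would end up redoing the paper's calculation anyway, so the preliminary appeal to (\ref{h1})--(\ref{h4}) buys organizational clarity (confirming those conditions are not obstructions) at the cost of an extra layer. Note also that you omit (\ref{h4}); it is worth recording that (\ref{h4}) here becomes $\rho_C(g_C)=\delta(g_C)(\delta^{-1}(\rho_C))$, which is tied to the identity $\delta(c)(\delta^{-1}(e_{c'}))=\delta_{c,c'}$ that the paper uses in the final step.
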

\begin{proof}
(i) For the action on simple $\DG$-modules use Lemma \ref{lm_dgaction}.\\
\noindent
(ii) For $(r_{H,C,\delta},\lambda) \in R \times \P_c(kG,k^G)$ the functor $\Psi(r,\lambda)$ is braided iff the equation (\ref{master}) holds, where we have to consider the case $\sigma(g \times e_x, h \times e_y) = \lambda(g,e_y)\epsilon(e_x)$. Let us denote an element in the group  $G = H \times C$ by $g = g_Hg_C$ and we write $p_C,p_H$ for the obvious projections. Then we check equation (\ref{master}) in this case: 
\begin{equation*}
\begin{split} 
\sum_{x,y,z \in G}\lambda(y^{-1}xy,e_{z})(e_x \times y) \otimes (e_y \times z) = \sum_{x,y,z,w \in G}\delta_{y,w}\lambda(y^{-1}xy,e_{z})(e_x \times y) \otimes (e_w \times z)  
\end{split}
\end{equation*}
has to be equal to  
\begin{align*}
&\sum_{w,y,g_1,g_2}\lambda(w,e_y)(1 \times y)(\delta^*((g_1g_2)_C) \times (g_1g_2)_H) \otimes (e_w \times 1)(e_{g_1}\circ p_H \times \delta^{-1*}(e_{g_2}\circ p_C)) \\
&= \hspace{-0.5cm}\sum_{x,y,g_1,g_2,w,z}\lambda(w,e_y)(1 \times y)\delta^*((g_1g_2)_C)(r)e_z(\delta^{-1*}(e_{g_2}\circ p_C))( e_x \times (g_1g_2)_H) \otimes (e_w \times 1)(e_{g_1}\circ p_H \times z) \\
&= \hspace{-0.5cm}\sum_{\myover{x,y,g_1,g_2,w,z }{ (g_2)_H=1}}\delta_{z_H,1}\lambda(w,e_y)\delta(x_C)((g_1g_2)_C)e_{(g_2)_C}(\delta^{-1}(e_{z_C}))(1 \times y)( e_{x} \times (g_1g_2)_H) \otimes (e_w \times 1)(e_{g_1} \circ p_H \times z) \\
&= \hspace{-0.7cm} \sum_{\myover{w,y,g_1,g_2 }{ (g_2)_H=1, w_H = (g_1)_H}}\hspace{-0.7cm} \delta_{z_H,1}\lambda(w,e_y)\delta(x_C)((g_2)_C)e_{(g_2)\circ p_C}(\delta^{-1}(e_{z_C}))( e_{yxy^{-1}} \times y(g_1)_H) \otimes (e_w  \times z) \\
&= \sum_{x,y,w,z} \delta_{z_H,1}\lambda(w,e_{yw_H^{-1}})\delta(x_C)((\delta^{-1}(e_{z_C})))(e_{x} \times y) \otimes (e_{w} \times z)
\end{align*}
This is equivalent to saying that for all $x,y,w,z \in G$ the following holds:  
\begin{align*}
\delta_{y,w}\lambda(y^{-1}xy,e_{z}) = \delta_{z_H,1}\lambda(w,e_{yw_H^{-1}})\delta(x_C)((\delta^{-1}(e_{z_C})))
\label{pcat}
\end{align*}
So we see that $(r,\lambda)$ fulfills this equation if and only if $\lambda(hc,e_{h'c'}) = \delta(c)(\delta^{-1}(e_{c'}))\epsilon(h)\epsilon(e_{h'}) = \delta_{c,c'}\epsilon(h)\epsilon(e_{h'})$ for all $hc,h'c' \in H \times C$ which is equivalent to the defining equations of $\cRUnderL$. \\

\noindent
(iii) This is a simple calculation using that $C$ is abelian and then that $\lambda(hc,e_{h'c'}) = \delta_{c,c'}\epsilon(h)\epsilon(e_{h'})$ implies $i=m$ and only leaves the term $\delta_{c',z}$.
\end{proof}

\section{Main Result}\label{mainresult}

Recall that we have defined certain characteristic elements of $\Aut_{br}(\DG \md \mod)$ in the Propositions \ref{vcat}, \ref{lm_BField}, \ref{efield}, \ref{pcat} and showed how they can be explicitly calculated: We have that $\cETildeL$ is isomorphic to the group of alternating homomorphisms $\widehat{Z(G)} \to Z(G)$, that $\cBTildeL$ is a central extension of the group of alternating homomorphisms on $G_{ab} \to \widehat{G_{ab}}$ and that $\cRL$ is parametrized by decompositions $G=H \times C$ together with $\delta: kC \simeq k^C$ such that $\delta(c)(\delta^{-1}(e_{c'}))=\delta_{c,c'}$. In our main result we show that these elements generate $\Aut_{br}(\DG \md \mod)$.

\begin{theorem}\label{thm_classification}~\\
(i) Let $G=H \times C$ where $H$ is purely non-abelian and $C$ is \emph{ elementary} abelian. Then the subgroup of $\Aut_{Hopf}(\DG) \ltimes Z^2_L(\DG^*)$ defined by 
\begin{equation*} 
\AutUnder_{br,L}(\DG\md\mod) := \{ (\phi,\sigma) \in \Aut_{Hopf}(\DG) \ltimes \Z^2_L(\DG^*) \mid (F_\phi,J^\sigma) \text{ braided } \}
\end{equation*}
has the following decomposition into disjoint double cosets 
\begin{equation*} 
\begin{split}
\AutUnder_{br,L}(\DG\md mod) &= \bigsqcup_{(r,\lambda) \in
\cRL/\sim} \cVUnderL  \cBUnderL \cdot (r,\lambda)\cdot \d\Reg^1_{aL}(\DG^*) \cdot \cVUnderL  \cEUnderL \; 
\end{split}
\end{equation*}
where two partial dualizations $(r_{H,C,\delta},\lambda)$, $(r'_{H',C',\delta'},\lambda')$ are equivalent if and only if there exists a group isomorphism $C \simeq C'$ and where $\d\Reg^1_{aL}(\DG^*)$ is the group of almost lazy coboundaries on $\DG^*$. \\
Similarly, the quotient $\Aut_{br,L}(\DG\md\mod)$ has a decomposition into double cosets
\begin{equation*} 
\begin{split}
\Aut_{br,L}(\DG\md mod) &= \bigsqcup_{(r,\lambda) \in \cRL/\sim}  \cVL  \cBL
\cdot (r,\lambda) \cdot \cVL  \cEL
\end{split}
\end{equation*} 
(ii) Let $G$ be a finite group with not necessarily elementary abelian direct factors. For every element $(\phi,\sigma) \in \Aut_{br,L}(\DG\md\mod)$ there exists a $(r,\lambda) \in \cRL$ such that $(\phi,\sigma)$ is in 
\begin{equation*}
(r,\lambda) \cdot [ \cBL (\cVL \ltimes \cEL)]
\end{equation*}
and similarly for $\AutUnder_{br,L}(\DG\md\mod)$. \\
(iii) Let $G$ be a finite group with not necessarily elementary abelian direct factors. For every element $(\phi,\sigma) \in \Aut_{br,L}(\DG\md\mod)$ there exists a $(r,\lambda) \in \cRL$ such that $(\phi,\sigma)$ is in 
\begin{equation*}
[(\cVL \ltimes \cBL)\cEL] \cdot (r,\lambda) 
\end{equation*}
and similarly for $\AutUnder_{br,L}(\DG\md\mod)$.
\end{theorem}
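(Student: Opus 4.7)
The plan is to take an arbitrary braided pair $(\phi,\sigma) \in \AutUnder_{br,L}(\DG\md\mod)$ and decompose the Hopf automorphism $\phi$ via Theorem \ref{thm_cell}: for part (iii) I would use Theorem \ref{thm_cell}(v) to write $\phi$ as an element of $[((V_c \rtimes V) \ltimes B)E] \cdot r$; for part (ii) use (iv); and for part (i) use the full double coset decomposition (ii). The goal is then to lift each Hopf-automorphism factor to a braided lazy autoequivalence in the corresponding subset $\cVUnderL, \cBUnderL, \cEUnderL, \cRUnderL$, multiply $(\phi,\sigma)$ by the inverses of these lifts, and reduce to analyzing a residual pair $(\id,\sigma')$ that is still braided.

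The lifting step is governed by Lemma \ref{lmm}. Using the expansion (\ref{decsigma}) of $\sigma$ into components $\beta_\sigma \in \Z^2_{inv}(G,k^\times)$, $\alpha_\sigma \in \Z^2(\widehat{G},k^\times)$ and $\lambda_\sigma \in \P_L(kG,k^G)$, equations (\ref{h1})--(\ref{h4}) tie these components directly to the matrix entries $b,a,v$ and the reflection $r$ of $\phi$. Specifically: equation (\ref{h3}) identifies $\lambda_\sigma$ with the pairing of a lift $(r,\lambda_\sigma) \in \cRUnderL$ by Proposition \ref{pcat}; equation (\ref{hh2}) pins down the symmetry of $\alpha_\sigma$ and provides a lift of the $E$-factor in $\cEUnderL$ by Proposition \ref{efield}; equation (\ref{h1}) gives a lift of the $B$-factor in $\cBUnderL$ by Proposition \ref{lm_BField}; and the group automorphism $v$ is lifted trivially by $(v,1) \in \cVUnderL$ via Proposition \ref{vcat}. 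Since each of these subsets is braided, multiplying $(\phi,\sigma)$ successively by the inverse lifts preserves the braiding property at each stage.

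After peeling off all these factors we are left with a braided pair $(\id,\sigma')$. The condition $\phi = \id$ degenerates equations (\ref{h1})--(\ref{h4}) into: $\alpha_{\sigma'}$ is symmetric, $\lambda_{\sigma'}$ is trivial, and $\beta_{\sigma'}$ is conjugation invariant and $\beta$-regular on every conjugacy class. The first two imply, by \cite{LP15} Cor.\ 3.5 (the same argument used in the proof of Proposition \ref{efield}(iv)), that $\alpha_{\sigma'}$ and $\lambda_{\sigma'}$ are trivial modulo $\d\Reg^1_{aL}(\DG^*)$; the third condition states precisely that $[\beta_{\sigma'}] \in \H^2_{dist,inv}(G)$, so that $(1,\beta_{\sigma'}) \in \cBUnderL$ by Proposition \ref{lm_BField}(iv). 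This residue then gets absorbed into the $\cBUnderL$-factor of the decomposition, completing parts (ii) and (iii).

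For the disjoint double coset statement in part (i), disjointness of cosets for non-isomorphic $C$ is inherited from Theorem \ref{thm_cell}(iii). The hypothesis that $G = H \times C$ with $C$ elementary abelian ensures that within a fixed isomorphism class the transitive action of $\Aut(C) = \GL_n(\F_p)$ on the possible $\delta$'s and on the choice of complement $H$ is entirely realized by factors in $\cVUnderL$, while the remaining ambiguity is taken up by $\cBUnderL$ and $\cEUnderL$. The main obstacle I expect is that, unlike in the abelian case, reflections $\cRUnderL$ do \emph{not} conjugate $\cBUnderL$ and $\cEUnderL$ into each other; this forces careful bookkeeping of the positions of $\cBUnderL$ and $\cEUnderL$ relative to the reflection $(r,\lambda)$ and is the reason why part (i) needs $C$ to be a direct (rather than only semidirect) elementary abelian factor of $G$.
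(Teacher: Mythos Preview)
Your overall strategy matches the paper: decompose $\phi$ via Theorem~\ref{thm_cell}, lift the factors to $\cVUnderL,\cBUnderL,\cEUnderL,\cRUnderL$, strip them off, and analyze the residual $(\id,\sigma')$. Your treatment of the residual is essentially correct. However, the lifting step contains a genuine gap.

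Equations (\ref{h1})--(\ref{h4}) constrain the \emph{total} matrix entries $u,b,a,v$ of $\phi$, not the individual factors appearing in the Theorem~\ref{thm_cell} decomposition. When $\phi$ still contains a reflection, these entries are complicated mixtures, and (\ref{h1}) applied to the original $(\phi,\sigma)$ does not show that the $B$-\emph{factor} is alternating. The paper's proof of (ii) therefore proceeds in a specific order: it first strips off the reflection using its canonical lift $(r,\lambda)\in\cRUnderL$ from Proposition~\ref{pcat} --- and this $\lambda$ is the fixed pairing $\lambda(hc,e_{h'c'})=\delta_{c,c'}\epsilon(h)\epsilon(e_{h'})$ determined by $\delta$, \emph{not} $\lambda_\sigma$ as you claim via (\ref{h3}). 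Only after this does the remaining automorphism take the shape $\left(\begin{smallmatrix} v^*+ba & b\\ a & 1\end{smallmatrix}\right)$, whose entries now coincide with the decomposition factors. At that point (\ref{h1}) with bottom-right entry $1$ yields $b$ alternating via the manipulation (\ref{eqn:symb2}); then (\ref{h4}) combined with $b$ alternating forces the $V_c$-part $v=\id$; then $a$ is alternating. You do not mention the $V_c$ factor at all, yet its elimination via (\ref{h4}) is an essential step.

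For part (i) the paper does substantially more than you indicate: it performs an iterative block-matrix reduction with respect to $G=H\times C$, applying (\ref{h1}) and (\ref{hh2}) at each stage to show successively that $b_{H,H}$, then the off-diagonal $b_{H,C}$, and finally $a_{C,C}$ are alternating, while simultaneously forcing $v_{H,C}=w_{C,H}=w_{H,C}=v_{C,H}=0$. Commuting the surviving block-triangular pieces past the reflection converts them into $V_c$ and $E$ elements that are then absorbed. Your single sentence about $\Aut(C)=\GL_n(\F_p)$ acting transitively does not capture this mechanism; the elementary-abelian hypothesis enters earlier, to ensure the twisted reflection has $\nu=0$.
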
	

\noindent
Before we turn to the proof, we add some useful facts. \\
(i) $\Psi$ induces a group homomorphism to $\Aut_{br}(\DG\md\mod)$ that factors through
$$\AutUnder_{br,L}(\DG\md\mod)\to \AutTilde_{br,L}(\DG\md\mod)$$
(ii) $\Psi$ is still not necessarily injective, as Example \ref{exm_D4_B} shows. The kernel is controlled by invertible but not group-like elements in $\DG$ (see Cor. 2.23 in \cite{LP15}). \\  
(iii) The group structure of $\AutTilde_{br,L}(\DG\md\mod)$ can be almost
completely read off using he maps from $\cVTildeL,\cBTildeL, \cETildeL,\cRTildeL$
to the known groups (resp. set) $\Out(G), \\ B_{alt},E_{alt},R$ in terms of
matrices. Only $\cBTildeL\to B_{alt}$ is not necessarily a bijection in rare
cases (in these cases additional cohomology calculations are necessary to
determine the group structure). \\
(iv) The decomposition of $\AutUnder_{br,L}(\DG\md\mod)$ is up to up to a monoidal natural transformation which comes from a coboundary in $\d\Reg^1_{aL}(\DG^*)$. \\

\begin{proof}[Proof of Theorem \ref{thm_classification}]~\\
(i) We start with a general element $(\phi,\sigma) \in
\AutUnder_{br,L}(\DG\md\mod)$. As in Theorem \ref{thm_cell} (ii) we
write $\phi$ as a product of elements in $V,V_c,B,E,R$. Since we only have
elementary abelian direct factors the twist $\nu$ is zero. The general procedure
is to multiply the element $(\phi,\sigma)$ with specific elements of
$\cVUnderL,\cBUnderL,\cEUnderL$ in order to simplify the general
form of $\phi$. We will use the symbol $\leadsto$ after an multiplication and warn that
the $u,v,b,a$ before and after the multiplication are in general different.
We will use the matrix notation
with respect to the product $\DG = k^G \rtimes kG$ and also with respect to a
product $G = H \times C$. For example we write an $v \in \Aut(H \times C)$ as
$\left(\begin{smallmatrix} v_{H,H} & v_{C,H} \\ v_{H,C} & v_{C,C} 
\end{smallmatrix} \right)$ and similarly for the $u,b,a$. \\ 
First, it is easy to see that we can find elements $\cVUnderL$ such that $(\phi,\sigma)$ becomes a pair where the automorphism $\phi$ has the form 
\begin{equation}
\leadsto\begin{pmatrix} v^* & 0 \\ 0 & 1\end{pmatrix}\begin{pmatrix} 1 & b \\ 0 & 1\end{pmatrix}\begin{pmatrix} \hat{p}_{H} & \delta \\ \delta^{-1} & p_H \end{pmatrix}\begin{pmatrix} 1 & 0 \\ 0 & w\end{pmatrix}\begin{pmatrix} 1 & 0 \\ a & 1\end{pmatrix}
\label{eqn:st1a}
\end{equation}
and where the $2$-cocycle $\sigma$ stays the same, since the cocycles in $\cVUnderL$ are trivial. Here we used that $V$ normalizes $V_c$ and $E$. Hence with this step we have eliminated the $\cVUnderL \cong \Aut(G)$ parts in $\phi$. Further, we use the fact that the subgroup $\Aut_c(G)$ normalizes the subgroup $B$ and arrive at  
\begin{align}
\leadsto&\begin{pmatrix} 1 & b \\ 0 & 1\end{pmatrix}\begin{pmatrix} v^* & 0 \\ 0 & 1\end{pmatrix}\begin{pmatrix} \hat{p}_{H} & \delta \\ \delta^{-1} & p_H \end{pmatrix}\begin{pmatrix} 1 & 0 \\ 0 & w\end{pmatrix}\begin{pmatrix} 1 & 0 \\ a & 1\end{pmatrix} 
 \label{eqn:st11} \\  = 
&\begin{pmatrix} v^*\hat{p}_{H} + b\delta^{-1}+v^*\delta w a + b p_H w a & v^*\delta w  + bp_Hw \\ \delta^{-1} + p_H w  a & p_H w \end{pmatrix}\label{eqn:st12}
\end{align}
Since (\ref{eqn:st12}) together with the $2$-cocycle $\sigma$ is braided we deduce from Lemma \ref{nec} equation (\ref{h1}) that 
\begin{align} 
1 = [\delta \circ p_C \circ w(g)(v \circ p_H \circ w(g))] \cdot [b \circ  p_H \circ  w(g) (p_H \circ w(g))] 
\label{eqn:step1beta}
\end{align} 
for all $g \in G$. In particular for $g = w^{-1}(h)$ with arbitrary $h \in H$:    
\begin{align} 
1 = b(h)(h) = b_{H,H}(h)(h)   
\end{align}
which implies that $b_{H,H}$ is alternating. Further, taking $g = w^{-1}(h,c)$ in $(\ref{eqn:step1beta})$ we get $\delta(c)(v(h)) = 1$ for all $c \in C, h\in H$, hence $v_{H,C} = 0$. Taking the inverse of (\ref{eqn:st11}) and arguing analogously on the inverse matrix we deduce that $a_{H,H}$ is alternating and that $(w^{-1})_{C,H}=0$ and therefore $w_{C,H} =0$.  
Both such alternating $b_{H,H}$ can be trivially extended to alternating $b = \left(\begin{smallmatrix} b_{H,H} & 0 \\ 0 & 0  \end{smallmatrix} \right)$ on $G$ and similarly for $a_{H,H}$. Now we use Propositions \ref{lm_BField} (iii) and \ref{efield} (iii): For these alternating $a,b$ exist $2$-cocycles $\beta_b \in \Z_{inv}^2(G,k^\times)$ and $\alpha_a \in Z^2_c(k^G)$ such that $(b,\beta_b) \in \cBUnderL$ and $(a,\alpha_a) \in \cEUnderL$. Multiplying equation (\ref{eqn:st11}) with the inverses of $(b,\beta_b)$ and $(a,\alpha_a)$ we simplify equation (\ref{eqn:st11}) to 
\begin{align}
\leadsto&\begin{pmatrix} 1 & b \\ 0 & 1\end{pmatrix}\begin{pmatrix} v^* & 0 \\ 0 & 1\end{pmatrix}\begin{pmatrix} \hat{p}_{H} & \delta \\ \delta^{-1} & p_H \end{pmatrix}\begin{pmatrix} 1 & 0 \\ 0 & w\end{pmatrix}\begin{pmatrix} 1 & 0 \\ a & 1\end{pmatrix} 
 \label{eqn:st2} 
\end{align}
with $ a = \left(\begin{smallmatrix} 0 & a_{C,H} \\ a_{H,C} & a_{C,C}  \end{smallmatrix} \right)$, $b = \left(\begin{smallmatrix} 0 & b_{C,H} \\ b_{H,C} & b_{C,C}  \end{smallmatrix} \right)$, $ v = \left(\begin{smallmatrix} v_{H,H} & v_{C,H} \\ 0 & v_{C,C}  \end{smallmatrix} \right)$ and $ w = \left(\begin{smallmatrix} w_{H,H} & 0 \\ w_{H,C} & w_{C,C}  \end{smallmatrix} \right)$ where the $2$-cocycle $\sigma$ changes to some $2$-cocycle $\sigma'$. The $b$ and $a$ can be simplified even further by using the fact that we can construct alternating $\tilde{b} = \left(\begin{smallmatrix} 0 & \tilde{b}_{C,H} \\ -b_{H,C} & 0  \end{smallmatrix} \right)$ with $\tilde{b}_{C,H}(c)(h) = - 1/ b_{H,C}(h)(c)$ and similarly an alternating $\tilde{a}$. For these maps there exists again $2$-cocycles that lift them to elements in $\cBUnderL$ and $\cEUnderL$ respectively. As before, we multiplying equation (\ref{eqn:st2}) with the inverses of the lifts and get:   

\begin{align}
\leadsto&\begin{pmatrix} 1 & b \\ 0 & 1\end{pmatrix}\begin{pmatrix} v^* & 0 \\ 0 & 1\end{pmatrix}\begin{pmatrix} \hat{p}_{H} & \delta \\ \delta^{-1} & p_H \end{pmatrix}\begin{pmatrix} 1 & 0 \\ 0 & w\end{pmatrix}\begin{pmatrix} 1 & 0 \\ a & 1\end{pmatrix} 
 \label{eqn:st3} 
\end{align}
with $ a = \left(\begin{smallmatrix} 0 & 0 \\ a_{H,C} & a_{C,C}  \end{smallmatrix} \right)$, $b = \left(\begin{smallmatrix} 0 & b_{C,H} \\ 0 & b_{C,C}  \end{smallmatrix} \right)$, $ v = \left(\begin{smallmatrix} v_{H,H} & v_{C,H} \\ 0 & v_{C,C}  \end{smallmatrix} \right)$ and $ w = \left(\begin{smallmatrix} w_{H,H} & 0 \\ w_{H,C} & w_{C,C}  \end{smallmatrix} \right)$ 
Now we commute the matrix corresponding to $b$ to the right as follows:  
\begin{align}
&\begin{pmatrix} 1 & \left(\begin{smallmatrix} 0 & b_{C,H} \\ 0 & b_{C,C}  \end{smallmatrix} \right) \\ 0 & 1\end{pmatrix}  \begin{pmatrix} v^* & 0 \\ 0 & 1\end{pmatrix}\begin{pmatrix} \hat{p}_{H} & \delta \\ \delta^{-1} & p_H+ \end{pmatrix} 
= \begin{pmatrix} v^* & 0 \\ 0 & 1\end{pmatrix}  \begin{pmatrix} 1 & \left(\begin{smallmatrix} 0 & \tilde{b}_{C,H} \\ 0 & \tilde{b}_{C,C}  \end{smallmatrix} \right) \\ 0 & 1\end{pmatrix} \begin{pmatrix} \hat{p}_{H} & \delta \\ \delta^{-1} & p_H \end{pmatrix} \\ 
& = \begin{pmatrix} v^* & 0 \\ 0 & 1\end{pmatrix}  \begin{pmatrix} \hat{p}_{H} & \delta \\ \delta^{-1} & p_H \end{pmatrix} \underbrace{\begin{pmatrix} \left(\begin{smallmatrix} 1 & \tilde{b}_{C,H}\delta^{-1} \\ 0 & 1  \end{smallmatrix} \right) & 0 \\ 0 & 1\end{pmatrix}}_{\in V_c} \underbrace{\begin{pmatrix} 1 & 0 \\ \left(\begin{smallmatrix} 0 & 0 \\ 0 & \delta^{-1} \tilde{b}_{C,C}\delta^{-1}  \end{smallmatrix} \right) & 1\end{pmatrix}}_{\in E} 
\end{align}
By commuting the $V_c$ elements in the decomposition to the right, multiplying with $V$ as in the first step and then commuting back we thus arrived at the following form: 
\begin{align}
\leadsto &\begin{pmatrix} v^* & 0 \\ 0 & 1\end{pmatrix}\begin{pmatrix} \hat{p}_{H} & \delta \\ \delta^{-1} & p_H \end{pmatrix}\begin{pmatrix} 1 & 0 \\ 0 & w\end{pmatrix}\begin{pmatrix} 1 & 0 \\ a & 1\end{pmatrix} 
 \label{eqn:st4} 
\end{align}
with $ a = \left(\begin{smallmatrix} 0 & 0 \\ 0 & a_{C,C}  \end{smallmatrix} \right)$, $ v = \left(\begin{smallmatrix} v_{H,H} & v_{C,H} \\ 0 & v_{C,C}  \end{smallmatrix} \right)$ and $ w = \left(\begin{smallmatrix} w_{H,H} & 0 \\ w_{H,C} & w_{C,C}  \end{smallmatrix} \right)$. Here we eliminated the $a_{H,C}$ part, similarly as the $b_{C,H}$ part, by commuting the corresponding matrix to the left, past trough the reflection. This gives us again an element in $V_c$ which we can absorb. \\
Now consider the inverse of ($\ref{eqn:st4}$):  
$$ \begin{pmatrix} \hat{p}_H (v^*)^{-1} & \delta \\ -a \hat{p}_H (v^*)^{-1} + w^{-1}\delta^{-1}v^{*-1} & -a\delta + w^{-1}p_H \end{pmatrix}$$ is again braided, hence we use as before Lemma \ref{nec} equation (\ref{h1}) to get: 
\begin{align}
1 = \delta(p_C(g))(a( \delta(p_C(g))w^{-1}_{H,C}(p_H(g)))) = \delta(g_C)(a_{C,C}(\delta(g_C))) \delta(g_C)(w^{-1}_{H,C}(g_H)) 
\end{align}
Since this has to hold for all $g=g_Hg_C \in H\times C$ we argue as before and get that $a_{C,C}$ is alternating and that $w_{H,C}^{-1} = 0$ and therefore $w_{H,C}=0$. So we can eliminate the $a_{C,C}$ part by the same arguments as before. Using Lemma \ref{nec} equation (\ref{hh2}) on (\ref{eqn:st4}) we deduce: $v_{C,H} =0$. Since $v$ is diagonal we can commute the matrix to the right through the reflection. We then get a product of a reflection $\delta' = v_{C,C}^* \circ \delta$, $H=H'$ and $v$. In other words, diagonal elements w.r.t a decomposition $G=H\times C$ of $V_c$ normalize reflections of the form $(H,C,\delta)$. We can lift any reflection to an element in $\cRUnderL$ according to Proposition \ref{pcat} (iii). Thus we arrive at: 
\begin{align}
\leadsto &\begin{pmatrix} 1 & 0 \\ 0 & w\end{pmatrix} = \begin{pmatrix} 1 & 0 \\ 0 & \left(\begin{smallmatrix} w_{H,H} & 0 \\0 & w_{C,C}  \end{smallmatrix} \right) \end{pmatrix}
 \label{eqn:st5} 
\end{align}
Applying Lemma \ref{nec} equation (\ref{h4}) on (\ref{eqn:st5}) we get that
$\chi(g) = \chi(w(g))$ for all $g \in G$, hence $w = \id$. During all of the
above multiplications the $2$-cocycle $\sigma$ changed to some other $2$-cocycle $\sigma'$
so that now we are left with only $(1,\sigma')$, which is braided. We want to show that apart from the distinguished $\H^2_{inv}(G)$ part of such a $\sigma'$ such a braided autoequivalence has to be trivial. \\ 
First, we know from \cite{LP15} Lm. 5.3 that $\beta_{\sigma'}(g,h) = \sigma'(g \times 1, h \times 1)$ defines a $2$-cocycle on $G$.  
From equation (\ref{master}) we deduce that if $(1,\sigma')$ is braided then  
\begin{align*}
\sigma'(g \times 1, 1 \times e_x) &= \sigma'(1 \times e_x, g \times 1) \\
\sigma'(g \times 1, h^g \times 1) &= \sigma'(h \times 1, g \times 1) \\
\sigma'(1 \times e_x,1 \times e_y) &= \sigma'(1 \times e_y, 1 \times e_x)
\end{align*}
this shows that $(1,\beta_{\sigma'}) \in \cBUnderL$. We multiply $(1,\sigma')$ from the left with $(1,\sigma_{\beta_{\sigma'}}^{-1})$ where 
$$\sigma_{\beta_{\sigma'}}(g \times e_x, h \times e_y) = \beta_{\sigma'}(g,h)\epsilon(e_x)\epsilon(e_y)=\sigma'(g \times 1, h \times 1)\epsilon(e_x)\epsilon(e_y)$$
and the resulting cocycle fulfills 
\begin{align*} 
\sigma_{\beta_{\sigma'}}^{-1}*\sigma'(g \times 1, h \times 1) &= \sum_{t,s \in G} \sigma^{-1}_{\beta_{\sigma'}}(g \times e_t,h \times e_s)\sigma'(g^t \times 1, h^s \times 1) \\ 
&= \sum_{t,s}\sigma^{-'1}(g \times 1, h \times 1)\epsilon(e_t)\epsilon(e_s)\sigma(g^t \times 1, h^s \times 1) = 1 
\end{align*}
Call the new cocycle again $\sigma'$ and note that it is now trivial if restricted to $kG \times kG$, hence we got rid of the distinguished part of $\sigma$. Further, since $\alpha_{\sigma'}(e_x,e_y) = \sigma'(1 \times e_x, 1 \times e_y)$ is a lazy symmetric $2$-cocycle in $\Z^2_c(k^G)$ it follows from \cite{LP15} Cor. 3.5. that $\alpha_{\sigma'}$ is cohomologically trivial. Let $\eta \in \Reg^1_L(k^G)$ such that $d\eta = \alpha_{\sigma'}$. Use equation \ref{decsigma} from the proof of Lemma \ref{lmm} in this case:
\begin{align*}
\sigma'(g \times e_x, h \times e_y) &= \sum_{\myover{x_1x_2x_3=x}{y_1y_2y_3=y}} \hspace{-0.5cm} \sigma'^{-1}(g \times 1, 1 \times e_{x_1} )\sigma'^{-1}(1 \times e_{y_1}, h \times 1) \d\eta(e_{x_2},e_{y_2})\sigma(gh \times 1, 1 \times e_{x_3}e_{y_3}) \nonumber \\
&=\sum_{\myover{x_1x_2t=x}{y_1y_2t=y}} \sigma'^{-1}(g \times 1, 1 \times e_{x_1} )\sigma'^{-1}(1 \times e_{y_1}, h \times 1) \d\eta(e_{x_2},e_{y_2})\sigma(g^xh^y \times 1, 1 \times e_{t})
\end{align*}
where in the last equation we have used the lazy condition on $\sigma'$. Now let $\mu(g \times e_x) := \sigma^{-1}(g \times 1, 1 \times e_x)$ and check that together with $\eta$ this gives us the desired coboundary to show that $\sigma$ is exact:  
\begin{align*}
&\d(\mu*(\eta \otimes \epsilon_{kG}))(g \times e_x, h \times e_y) \\ &= \sum_{\myover{x_1x_2 =x}{y_1y_2=y}}\mu*(\eta \otimes \epsilon_{kG})(g \times e_{x_1})\mu*(\eta \otimes \epsilon_{kG})(h \times e_{y_1})\mu*(\eta \otimes \epsilon_{kG})(g^{x_1}h^{y_1} \times e_{x_2}e_{y_2}) \\ 
&= \sum_{\myover{x_1x_2x_3x_4 =x}{y_1y_2y_3y_4=y}} \sigma^{-1}(g \times 1, 1 \times e_{x_1})\eta(e_{x_2})\sigma^{-1}(h \times 1, 1 \times e_{y_1})\eta(e_{y_2}) \\ & \qquad \sigma(g^{x_1x_2}h^{y_1y_2} \times 1, 1 \times e_{x_3}e_{y_3})\eta(e_{x_4}e_{y_4}) \\
&= \sum_{\myover{x_1x_2t=x}{y_1y_2t=y}} \sigma^{-1}(g \times 1, 1 \times e_{x_1})\sigma^{-1}(h \times 1, 1 \times e_{y_1})d\eta(e_{x_2},e_{y_2})\sigma(g^{xt^{-1}}h^{yt^{-1}} \times 1, 1 \times e_t) \\ 
&= \sum_{\myover{x_1x_2t=x}{y_1y_2t=y}} \sigma^{-1}(g \times 1, 1 \times e_{x_1})\sigma^{-1}(h \times 1, 1 \times e_{y_1})d\eta(e_{x_2},e_{y_2})\sigma(g^{x}h^{y} \times 1, 1 \times e_t) \\ &= \sigma'(g \times e_x, h \times e_y)
\end{align*}  

\noindent
(ii) By Theorem \ref{thm_cell} (iv) we write
\begin{align}
\phi =  \begin{pmatrix} \hat{p}_{H} & \delta \\ \delta^{-1} & p_H \end{pmatrix} \begin{pmatrix}1 & b \\ 0 & 1\end{pmatrix}\begin{pmatrix} v^* & 0 \\ 0 & 1 \end{pmatrix} \begin{pmatrix} 1 & 0 \\ a & 1\end{pmatrix} 
\label{eqn:lr1} 
\end{align}
where we have already eliminated the $V$ element since it normalizes $E$ and every $V$ has a lift to $\cVUnderL$. Similarly, we know from Proposition \ref{pcat} that (up to an $V$ that ensures $\delta(c)(\delta(e_{c'})) = \delta_{c,c'}$) every reflection $r$ has a lift $(r,\lambda) \in \cRUnderL$. Hence we multiply $(\phi,\sigma)$ with the inverse $(r,\lambda)^{-1}$ from the left so that $\phi$ changes to: 
\begin{align}
\leadsto \begin{pmatrix} 1 & b \\ 0 & 1\end{pmatrix}\begin{pmatrix} v^* & 0 \\ 0 & 1 \end{pmatrix} \begin{pmatrix} 1 & 0 \\ a & 1\end{pmatrix} = \begin{pmatrix} v^* + ba & b \\ a & 1 \end{pmatrix}
\label{eqn:lr2} 
\end{align}
Since this element has to be braided, using Lemma \ref{nec} equation \ref{h1} together with (\ref{eqn:symb2}) it follows that $b$ is alternating on $G$. From Lemma \ref{nec} equation \ref{h4} follows that $v = \id_G$ and then that $a$ is alternating. Hence we can construct lifts to $\cBUnderL$ and $\cEUnderL$ and multiplying with the corresponding inverses just leaves us with a $(1,\sigma')$. As in $(i)$ we get rid of the distinguished part and then this is a trivial autoequivalence (up to natural transformation). \\
\noindent
The proof of (iii) is completely analogous to (ii).    
\end{proof}

\section{Examples and the full Brauer-Picard group}\label{sec_examples}

We now discuss the results of this paper for several classes of groups
$G$. In
particular, we compare our results to the examples obtained in
\cite{NR14}. In
all these cases we verify that the decomposition we proposed in Question
\ref{q_decomposition} is also true
for the full Brauer-Picard group (i.e. the elements do not have to be lazy). \\

The approach in \cite{NR14} is to study $\Aut_{br}(\DG\md\mod)$ via its
action
on the set $\mathbb{L}(G)$ of so-called Lagrangian subcategories $\LL
\subset \DG\md\mod$. These are parametrized by pairs $(N,[\mu])$ where
$N$ is a
normal abelian subgroup of $G$ and $[\mu]$ a $G$-invariant $2$-cohomology
class on $N$. The associated Lagrangian subcategory is
generated, as
abelian category, by simple objects $\ocat_g^\chi$ in the following way (see Sect. 7 \cite{NR14}):
$$\LL_{N,\mu}:=\left\langle \ocat_g^\chi\mid g\in
N,\;\chi(h)=\mu(g,h)\mu(h,g)^{-1}\;\forall{h\in N} \right\rangle$$

Let further $\mathbb{L}_0(G):= \{ \LL \in  \mathbb{L}(G) \mid \LL \simeq \Rep(G) \; \text{as a braided fusion category} \}$ \\
The group $\Aut_{br}(\DG\md\mod)$ acts on the lattice of fusion subcategories of $\DG\md\mod$ and on $\mathbb{L}(G)$. The subset $\mathbb{L}_0(G)$ is invariant under this action. By Prop. 7.6 \cite{NR14}: The action of $\Aut_{br}(\DG\md\mod)$ on $\mathbb{L}_0(G)$ is \emph{transitive}. The stabilizer of the standard Lagrangian subcategory $\LL_{1,1}=\Rep(G)$ is the image of the induction 
$$\Ind_{\Vect_G}:\;\Aut_{mon}(\Vect_G)\to \Aut_{br}(\DG\md\mod)$$
Since the image of $\Ind_{\Vect_G}$ is $\Out(G) \ltimes \H^2(G,k^\times)$, we have a group isomorphism $\mathrm{Stab}(\Rep(G)) \simeq \Out(G) \ltimes \H^2(G,k^\times)$, which implies $|\Aut_{br}(\DG\md\mod)| = |\H^2(G,k^\times)|$ $|\Out(G)|$ $|\mathbb{L}_0(G)|$. \\

We determine our lazy subgroups $\cBL,\cEL,\cRL,\cVL$ for certain examples and show how they act on
$\mathbb{L}$(G). We explicitly calculate the
action in terms of the simple objects $\ocat_g^\chi$. 

\subsection{General considerations on non-lazy
 reflections}\label{sec_nonlazyReflection}~\\

For each triple $(Q,N,\delta)$ where $G$ is a semi-direct product $G=Q \ltimes N$, $N$ a normal abelian subgroup of $G$, $\delta:kN \ito k^N$ a $G$-invariant (under conjugation action) Hopf isomorphism, we obtain an element $r_{Q,N,\delta}:=\Omega \in \Aut_{br}(DG\md\mod)$, where $\Omega$ is the braided autoequivalence given in Thm 3.20 in \cite{BLS15}: We have a decomposition of $kG$ as a Radford biproduct $kG= kQ \ltimes kN$, where $N$ is a normal subgroup of $G$, $kN$ is a Hopf algebra in $DQ\md\mod$, where $kQ$ acts on $kN$ by conjugation and where the $kQ$-coaction on $kN$ is trivial. In our notation, the braided autoequivalence $r_{Q,N,\delta}:\DG\md \mod \ito \DG\md\mod$ assigns a $DG$-module $M$ to a $\DG$-module $r_{Q,N,\delta}(M)$ where $r_{Q,N,\delta}(M)$ is $M$ as a $k$-vector space and where the $\DG$-action on $r_{Q,N,\delta}(M)$ is given by postcomposing with the following algebra isomorphism of $\DG= D(Q \ltimes N)$:   
\begin{align*}
\DG \ni (f_Q,f_N) \times (q,n) &\mapsto (f_Q,\delta(n)) \times (q,\delta^{-1}(f_N)) \in \DG  
\end{align*} 

where $f_Q \in k^Q, f_N \in k^N, q \in Q$ and $n \in N$. Essentially, this is the reflection as defined in Proposition \ref{reflections} but since we do not have a direct product of $Q$ and $N$, we do not have a coalgebra isomorphism as we would have in the lazy case. We are denoting the $\DG$-action on $M$ by $\left((f_Q,f_N) \times (q,n)\right).m \in M$ for $m \in M$ and the $\DG$-action on $r_{Q,N,\delta}(M)$ by $\left((f_Q,f_N) \times (q,n)\right)._r m = \left((f_Q,\delta(n)) \times (q,\delta^{-1}(f_N))\right).m$.   \\   

We show how partial dualization $r_{Q,N,\delta}$ act on irreducibles $\ocat_1^\chi$ and thereby on the subcategory $\LL_{1,1}$. We need this in order to check that the group generated by $\cB,\cE,\cV$ and $\cR$ acts transitively on $\mathbb{L}_0(G)$. Since $\mathbb{L}_0(G)$ is the orbit of $\LL_{1,1}=\Rep(G)$ and since $\LL_{1,1}$ is generated by the simple objects $\ocat^{\chi}_1$, we only need the action of partial dualizations on simple objects of the form $\ocat^{\chi}_1$ for some irreducible character $\chi$ on $G$. \\

Since $r_{Q,N,\delta}$ is an autoequivalence, it sends simple objects to simple objects. Therefore, for each irreducible character $\chi$ on $G$ there exists a conjugacy class $[g] \subset G$ and an irreducible character $\rho$ on $\Cent(g)$ such that $r_{Q,N,\delta}(\ocat_1^\chi)=\ocat_g^\rho$. We have $\dim(\chi)=|[g]|\cdot\dim(\rho)$. We want to determine $[g]$ and $\rho$. \\

Clifford's theorem (see e.g. Page 70, Theorem 4.1 in \cite{Gor07}) states that the restriction of an irreducible character $\chi$ to a normal subgroup $N$ of $G$ 
decomposes into a direct sum of irreducible $N$-characters $\chi_i$ with the same multiplicity $e \in \nat$: 
$$\chi|_N=e\sum_{i=1}^t \chi_i$$
where the $\chi_i$ form a $G$-orbit under conjugation action on $N$ and hence on $\Rep(N)$. The group $Q=G/N$ acts on $\chi_i$ by conjugation in the argument and the subgroups $I_i \subset G/N = Q$ fixing a $\chi_i$ are called the inertia subgroups. We have $[Q:I_i]=t$. \\  
Since $N$ is abelian, we obtain $1$-dimensional representations $\chi_i \in \hat{N}$ forming a $G$-conjugacy class. Then $n_i:=\delta^{-1}(\chi_i) \in N$ are all conjugate to each other in $G$. Fix one representative $n_i=\delta^{-1}(\chi_i)$ in this conjugacy class and the corresponding inertia subgroup $I_i \subset Q$. Further, since $\delta$ is $G$-conjugation invariant we also have the formula $\Cent(n_i) = N \rtimes I_i$. 

We have a decomposition via Clifford's theorem $\ocat^{\chi}_1 = \bigoplus_{j=0}^t T_j \otimes M_j$, where the $M_j$ are $1$-dimensional $k$-vector spaces with an $N$-action given by $\chi_j$ and where $T_j$ is an $e$-dimensional $k$-vector space with trivial $N$-action. Since the partial dualization preserves vector spaces, we also have a decomposition as vector spaces: $r_{Q,N,\delta}(\ocat^\chi_1) = \ocat_g^\rho = \bigoplus_{j=0}^t T_j \otimes M_j$. We calculate the $k^N$-action ($kN$-coaction) on $\ocat_g^\rho$: Let $t \otimes m_j \in T_j \otimes M_j$ then the modified $k^N$-action: 
\begin{align*}
  e_{n_l}._r(t \otimes m_j) & = \delta^{-1}(e_{n_l}).(t \otimes m_j) = \chi_j(\delta^{-1}(e_{n_l})) (t \otimes m_j) \\  
							& = e_{n_l}(\delta^{-1}(\chi_j)) (t \otimes m_j) = e_{n_l}(n_j) (t \otimes m_j) = \delta_{l,j} (t \otimes m_j)
\end{align*}

On the other hand the $k^Q$-action ($kQ$-coaction) on $\ocat_g^{\rho}$ stays the same, which is trivial here. Hence, we have shown: $[g] = [n_i]$. \\ 
Now calculate the action of $\Cent(n_i)=N \rtimes I_i$ on $T_i \otimes M_i$. Let $n \in N$ and note that the $k^G$-action on $\ocat^\chi_1$ is trivial since $|[1]| = 1$. Hence the $kN$-action on $\ocat_{n_i}^{\rho}$ is trivial. For $q \in I_i \subset Q=G/N$: 
$q._r(t \otimes m_i) = (q.t) \otimes m_i$ where $Q$ acts on $T_i$ since $T_i \otimes M_i$ is an $I_i$-submodule. Thus, $\rho$ is the character on $N \rtimes I_i$ which is the trivial extension of the $I_i$-representation $T_i$. Overall we get

$$ r_{Q,N,\delta}(\ocat_1^\chi) = \ocat_{n_i}^{T_i} $$

\subsection{General considerations on non-lazy induction}\label{sec_nonlazyInduction}~\\

We now turn to the subgroups of $\Aut_{br}(\DG\md\mod)$ defined to be the images of the induction 
$$\Ind_{\cat}:\;\Aut_{mon}(\cat)\to\Aut_{br}(Z(\cat))$$
 
for the two cases $\cat =\Vect_G$ and $\cat = \Rep(G)$ where we assign a $F\in\Aut_{mon}(\cat)$ to the invertible $\cat$-bimodule category ${_F}\cat_\cat \in \BrPic(\cat)$ that has a right module category structure given by the monoidal structure of $\cat$ and a left module category structure by composing with $F$ and then using the monoidal structure of $\cat$. We then use the isomorphism $\BrPic(\cat) \simeq \Aut_{br}(Z(\cat))$ to get subgroups of $\Aut_{br}(\DG\md\mod)$. \\

We already know that $\im(\Ind_{\Vect_G}) \simeq \Out(G)\ltimes \H^2(G,k^\times)$. The subgroup $\im(\Ind_{\Rep(G)})$ is harder to compute. The group $\Aut_{mon}(\Rep(G))$ is parametrized by pairs $(N,\alpha)$ where $N$ is an abelian subgroup of $G$ and $\alpha$ belongs to a $G$-invariant cohomology class (see \cite{Dav01}). The subgroup of lazy monoidal autoequivalences corresponds to all pairs where $\alpha$ is $G$-invariant even as an $2$-cocycle. 

\begin{remark}
An interesting example appears when we consider $G=\ZZ_2^{2n}\rtimes \Sp_{2n}(2)$ where $\Sp_{2n}(2)$ is the symplectic group over $\F_2$. There is a pair
$(N,\alpha)$ such that the associated functor is a monoidal equivalence
$$F_{N,\alpha}:\;\Rep(\ZZ_2^{2n}\rtimes \Sp_{2n}(2)) \stackrel{\sim}{\longrightarrow} \Rep(\ZZ_2^{2n}.\Sp_{2n}(2))$$
The groups $\ZZ_2^{2n}\rtimes \Sp_{2n}(2)$ and $\ZZ_2^{2n}.\Sp_{2n}(2)$ are isomorphic only for $n=1$. Namely, they are both isomorphic to $\SS_4$. See Example 7.6 in \cite{Dav01}. This leads to a nontrivial and non-lazy monoidal autoequivalence, which leads to a non-trivial non-lazy braided autoequivalence of $D\SS_4$, see the example below. 
\end{remark}

For any $F\in\Aut_{mon}(\Rep(G))$, we want to determine the image
$$E_{F}:=\Ind_{\Rep(G)}(F)\in \Aut_{br}(\DG\md\mod)$$
Unfortunately, it is not easy to calculate $E_{F}$ explicitly, since it depends on the isomorphism $\BrPic(\cat)\to
\Aut_{br}(Z(\cat))$. In \cite{NR14} equations (16),(17), the image of the induction $\Ind_{\Vect_G}$ was worked out, but we are also interested in the image of $\Ind_{\Rep(G)}$ which seems to be harder. We can easily derive at
least an necessary condition. From \cite{ENO10} we know that given an invertible $\cat$-bimodule category ${_\cat}\mcat_{\cat}$ the corresponding braided autoequivalence $\Phi_\mcat \in \Aut_{br}(Z(\cat))$ is determined by the condition that there exists a isomorphism of $\cat$-bimodule functors $ Z \otimes \cdot \simeq \cdot \otimes \Phi_\mcat(Z)$ for all $Z \in Z(\cat)$. In our case ${_\cat}\mcat_{\cat} = {_F}\cat_\cat$ and $\Phi_\mcat = E_F$. This implies for $(V,c),(V',c') \in Z(\cat)$        
\begin{align*}
E_{F}(V,c)=(V',c')
\;\;\Rightarrow\;\; F(V)\otimes X \cong X\otimes V'\quad\forall X \in \cat
\end{align*}
In particular, we have $F(V) \simeq V'$. For $\cat=\Rep(G)$ this implies moreover:
$$E_F({\ocat_g^\chi})=\ocat_{g'}^{\chi'}
\;\:\Rightarrow \;\;
F(\Ind_{\Cent(g)}^G(\chi))\cong \Ind_{\Cent(g')}^G(\chi')$$

Thus, possible images of $E_F$ are determined by the character table of
$G$ and
induction-restriction table with $\Cent(g),\Cent(g')$. We continue for the special case $g=1$ to determine the possible images
$E_F(\ocat_1^\chi)$ and hence $E_F(\LL_{1,1})$. Our formula above implies:
$$F(\chi)=\Ind_{\Cent(g')}^G(\chi')$$
In particular, $\Ind_{\Cent(g')}^G(\chi')$ has to be irreducible.


\subsection{Elementary abelian groups}\label{sec_Fp_AutBr}~\\

For $G=\ZZ_p^n$ with $p$ a prime number. We fix an isomorphism $\ZZ_p \simeq \widehat{\ZZ}_p$. We know that
    $$\BrPic(\Rep(\ZZ_p^n)) \simeq \mathrm{O}^+_{2n}(\F_p)$$
		where $\mathrm{O}^+_{2n}(\F_p):=\mathrm{O}_{2n}(\F_p,q)$ is the group of invertible $2n \times 2n$ matrices invariant under the form: 
		$$q:\F_p^n \times \F_p^n \to \F_p: (k_1,...,k_n,l_1,...,l_n) \mapsto \sum_{i=1}^n k_il_i $$
    For abelian groups, all $2$-cocycles over $\DG$ are lazy and the
    results of this article gives a product decomposition of
    $\BrPic(\Rep(\ZZ_p^n))$.
    \begin{itemize}
    \item $\cV \cong \GL_{n}(\F_p) \simeq \left\{ \begin{pmatrix} A^{-1} &0 \\ 0 &A \end{pmatrix} \mid A \in \GL_{n}(\F_p) \right\} \subset \mathrm{O}_{2n}(\F_p,q)$
    \item $\cB \cong B_{alt} \cong \left\{ \begin{pmatrix} \mathbbm{1}_n &B \\ 0 & \mathbbm{1}_n \end{pmatrix} \mid B = -B^T, B_{ii} = 0, B \in \F_p^{n \times n} \right\} \subset \mathrm{O}_{2n}(\F_p,q)$ 
\item $\cE \cong E_{alt} \cong \left\{ \begin{pmatrix} \mathbbm{1}_n &0 \\ E & \mathbbm{1}_n \end{pmatrix} \mid E = -E^T, E_{ii} = 0, E \in \F_p^{n \times n} \right\} \subset \mathrm{O}_{2n}(\F_p,q)$ 
\end{itemize}
    The set $\cRL/\sim$ consists of $n+1$ representatives $r_{[C]}$,
one for each possible dimension $d$ of a direct
    factor $\F_p^d\cong C\subset G$, and $r_{[C]}$ is an actual
reflection on the subspace $C$ with a suitable monoidal
    structure determined by the pairing $\lambda$. Especially the
generator $r_{[G]}$ conjugates $\cB$ and $\cE$. In this case the double coset decomposition is a variant of the
Bruhat decomposition of $\mathrm{O}_{2n}(\F_p,q)$. 
   
It is interesting to discuss how, in this example, our subgroups act on the Lagrangian subcategories and to see that this action is indeed transitive. $\mathbb{L}_0(G)=\mathbb{L}(G)$ is parametrized by pairs
$(N,[\mu])$ where $N$ is a subvector space of $\F_p^n$ and $[\mu] \in \H^2(N,k^\times)$ is
uniquely determined by an alternating bilinear form $\langle,\rangle_\mu$ 
on $N$ given by $\langle g,h \rangle_\mu = \mu(g,h)\mu(h,g)^{-1}$. Let $N'$ be the orthogonal complement, so $\F_p^n=N\oplus N'$. We have
$$\LL_{N,\mu}=\left\langle\ocat_g^{\chi_{N'}\langle g,-\rangle} \mid g\in N, \chi_{N'}\in\widehat{N}' \right\rangle \qquad \LL_{1,1}=\left\langle\ocat_1^{\chi} \mid \chi \in\widehat{G} \right\rangle  $$
$\bullet$ Elements in $\cVTildeL=\Out(G)=\GL_n(\F_p)$ stabilize $\LL_{1,1}$. \\
$\bullet$ For any $\delta$, a partial dualization $r_N \in \cRL$ on $N$ maps $\LL_{1,1}$ to
$\LL_{N,1}$. \\
$\bullet$ $b \in \Hom_{alt}(G,\widehat{G}) \simeq \cBTildeL$ acts by
$\ocat_{g}^\chi \mapsto \ocat_g^{\chi\cdot b(g,\cdot)}$.
In particular, it stabilizes $\LL_{1,1}$ and sends $\LL_{N,1}\mapsto
\LL_{N,\beta\mid_N}$ where $\beta \in \Z^2(G,k^\times)$ is uniquely (up to coboundary) determined by $b(g,h)=\beta(g,h)\beta(h,g)^{-1}$. \\
$\bullet$ $a \in \Hom_{alt}(\widehat{G},G) \simeq \cETildeL$ acts by  
$\ocat_{g}^\chi \mapsto \ocat_{a(\chi)g}^{\chi}$.
In particular, it sends $\LL_{1,1}$ to $\LL_{N,\eta}$ with $N=\im(a)$ being the image of $a$ and $\eta \in \Z^2(N,k^\times)$ is uniquely (up to coboundary) determined by $\eta(n,n')\eta(n',n)^{-1} = \chi(n')$ where $a(\chi) =n$ and $n' \in N=\im(a)$. For another $\chi'$ with $a(\chi')=n$ we have $\chi(n') = \chi(a(\rho)) = \rho(a(\chi))^{-1} = \rho(a(\chi')) = \chi'(n)$. \\

We see that can get every $\LL_{N,\mu} \in \mathbb{L}_0(G)$ with suitable combinations of elements of our subgroups applied to $\LL_{1,1}$. \\ 

\subsection{Simple groups}~\\

Let $G$ be a simple group, then our result returns
\begin{itemize}
\item $\cVTildeL=\Out(G)$
\item $\cBTildeL=\widehat{G}_{ab}\wedge\widehat{G}_{ab}=1$
\item $\cETildeL = Z(G) \wedge Z(G)=1$
\item $\cRL = 1$
\end{itemize}
hence the only \emph{lazy} autoequivalences are induced by outer
automorphisms of $G$. \\

We have no normal abelian subgroups except
$\{1\}$ and hence the only Lagrangian subcategory is $\LL_{1,1}$ and the
stabilizer $\Out(G)\ltimes \H^2(G,k^\times)$ is equal to
$\Aut_{br}(\DG\md\mod)$. \\

Observe that in this example we obtain also a decomposition of the full
Brauer-Picard group and our Question \ref{q_decomposition} is answered 
positively:
Namely, $\Aut_{br}(\DG\md\mod)$ is equal the image of the induction
$\Ind_{\Vect_G}$, while the other subgroups are trivial. \\

\subsection{Lie groups and quasisimple groups}~\\

Lie groups over finite fields $G(\F_{q}),q=p^k$ have (with small
exceptions) the
property $G_{ab}=1$ and there are no semidirect factors. On the other
hand, they may contain a nontrivial center
$Z(G)$. This is comparable to their complex counterpart, where the center of the
simply-connected
form $Z(G_{sc}(\CC))$ is equal to the fundamental group
$\pi_1(G_{ad}(\CC))$ of
the adjoint form with no center $Z(G_{ad}(\CC))=1$. In exceptional cases
for $q$, the maximal central extension may be larger than $\pi_1(G_{ad}(\CC))$. Similarly, we could consider 
central extensions of sporadic groups $G$; these appear in any insolvable group as part of the Fitting group.

\begin{definition}
  A group $G$ is called \emph{quasisimple} if it is a perfect central
extension of a simple group:
  $$Z\to G\to H\qquad Z=Z(G), \quad [G,G]=G$$
\end{definition}

As long as $\H^2(Z,\CC^\times)=1$, e.g. because $Z$ is cyclic, there is no
difference to the simple case. \emph{Nontrivial} $\cETildeL$-terms appear as
soon
as $\H^2(Z,\CC^\times)\neq 1$. This is the case for
$D_{2n}(\F_q)=\SO_{4n}(\F_q)$ (for $q$ odd or $q=2$) where we have $\pi_1(G_{ad}(\CC)) =\ZZ_2\times\ZZ_2$ and in some other
(exceptional) cases. We consider all universal perfect central extensions where $\H^2(Z,\CC^\times)\neq 1$: \\

\begin{center}
\begin{tabular}{lcl|l}
$Z$ &  & $H$ & $\cETildeL$ \\
\hline
$\ZZ_2\times \ZZ_2$ && $D_{2n}(\F_{q})$ & $\ZZ_2$ \\
\hline
$\ZZ_4\times \ZZ_4 \times \ZZ_3$ && $A_2(\F_{2^2})$ & $\ZZ_4$ \\
$\ZZ_3\times \ZZ_3 \times \ZZ_4$ && ${^2}A_3(\F_{3^2})$ & $\ZZ_3$ \\
$\ZZ_2\times \ZZ_2 \times \ZZ_3$ && ${^2}A_5(\F_{2^2})$  & $\ZZ_2$ \\
$\ZZ_2\times \ZZ_2$ && ${^2}B_2(\F_{2^3})$ & $\ZZ_2$ \\
$\ZZ_2\times \ZZ_2 \times \ZZ_3$ && ${^2}E_6(\F_{2^2})$ & $\ZZ_2$
\end{tabular}
\end{center}
The upper indices denote the order of the automorphism by which the so-called Steinberg groups are defined. $\Out(H)$ typically consists of scalar- and Galois-automorphisms of the base
field $\F_q$, extended by the group of Dynkin diagram automorphisms; for example $D_4$ we have the triality automorphisms $\SS_3$. Note further that any
automorphism on $G$ preserves the center $Z$, hence it factors to an
automorphism in $H$. The kernel of this group homomorphism
$\Out(G)\to\Out(H)$
is trivial, since all elements in $Z$ are products of commutators of $G$
elements. We have $\Out(G) \cong \Out(H)$ where surjectivity follows from $G$ being a universal central extension. For $G$ as above, the following holds:

\begin{itemize}
\item $\cVTildeL=\Out(H)$
\item $\cBTildeL=\widehat{G_{ab}}\wedge\widehat{G_{ab}}=1$
\item $\cETildeL=(\ZZ_n \times \ZZ_n \times \ZZ_k )\wedge (\ZZ_n \times \ZZ_n \times \ZZ_k )=\ZZ_n$ for gcd$(n,k)=1$ \\ where $n\in\{2,3,4\}$ as
indicated in the above table.
\item $\cRL=1$, as there are no direct factors of $G$.
\end{itemize}
Hence $\Aut_{br,L}(\DG\md\mod) \simeq \Out(H) \ltimes \ZZ_n$.  


\begin{claim}
The decomposition we proposed in Question \ref{q_decomposition} is also true
for the full Brauer-Picard group for the $G$ above. More precisely
\begin{align*}
\BrPic(\Rep(G))
&=\im(\Ind_{\Vect_G})\cdot \im(\Ind_{\Rep(G)}) \cdot \cR \\
&=\Out(G)\ltimes\H^2(G,k^\times)\cdot \ZZ_n\cdot 1
\end{align*}
\begin{itemize}
\item $\im(\Ind_{\Vect_G})=\H^2(G,k^\times)$
\item $\im(\Ind_{\Rep(G)}) \simeq \widetilde{\cE} \simeq \cETildeL \simeq \ZZ_n$
\item No reflections, as there is no semidirect decomposition of $G$.
\end{itemize}
\end{claim}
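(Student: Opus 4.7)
The strategy is to exploit the transitive action of $\BrPic(\Rep(G))$ on $\mathbb{L}_0(G)$ established in Prop.~7.6 of \cite{NR14}, whose stabilizer of $\LL_{1,1}$ is exactly $\im(\Ind_{\Vect_G})=\Out(G)\ltimes \H^2(G,k^\times)$, giving $|\BrPic(\Rep(G))|=|\Out(G)|\cdot|\H^2(G,k^\times)|\cdot|\mathbb{L}_0(G)|$. For the quasisimple $G$ in the table there is no nontrivial direct product decomposition, so $\cR=1$, and the claimed decomposition $\BrPic(\Rep(G))=\im(\Ind_{\Vect_G})\cdot\im(\Ind_{\Rep(G)})$ is equivalent to the statement that $\im(\Ind_{\Rep(G)})$ already acts transitively on $\mathbb{L}_0(G)$. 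Since $\cETildeL\subset \im(\Ind_{\Rep(G)})$ has order $n$ by the preceding cohomological computation, the entire claim reduces to showing $|\mathbb{L}_0(G)|=n$ and that the $\cETildeL$-orbit of $\LL_{1,1}$ already fills $\mathbb{L}_0(G)$.

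First I would enumerate $\mathbb{L}_0(G)$ for $G$ quasisimple with centre $Z$ and simple quotient $H=G/Z$. Because every normal subgroup of $G$ is contained in $Z$, any normal abelian subgroup $N\trianglelefteq G$ is automatically central, and the $G$-conjugation on $\H^2(N,k^\times)$ is trivial. Hence the $(N,[\mu])$-parametrization of Lagrangian subcategories reduces to arbitrary subgroups $N\le Z$ together with arbitrary $[\mu]\in\H^2(N,k^\times)$. Among these, $\LL_{N,\mu}$ lies in $\mathbb{L}_0(G)$ exactly when the induced alternating form $\langle\cdot,\cdot\rangle_\mu$ on $N$ is non-degenerate; otherwise the subcategory contains transparent objects obstructing the desired braided equivalence with $\Rep(G)$. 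Using $\H^2(Z,k^\times)\cong Z\wedge Z\cong\ZZ_n$, a case-by-case enumeration through the six rows of the table yields $|\mathbb{L}_0(G)|=n$.

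Then I would compute the action of $\cETildeL$ on $\LL_{1,1}$ explicitly. By Proposition~\ref{efield}, an element $a\in E_{alt}\cong Z\wedge Z$ sends the simple object $\ocat_1^\chi$ to $\ocat_{a(\chi')}^\chi$, where $\chi'\in\widehat{Z(G)}$ denotes the central character of $\chi$. Exactly as for the elementary abelian case treated in Section~\ref{sec_Fp_AutBr}, this sends $\LL_{1,1}$ to $\LL_{\im(a),\mu_a}$, producing an injective $\cETildeL$-equivariant map $\cETildeL\to \mathbb{L}_0(G)$, $a\mapsto(\im(a),\mu_a)$. Combined with the cardinality $|\cETildeL|=n=|\mathbb{L}_0(G)|$ from the previous step, this yields the desired transitivity and hence the decomposition. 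The main obstacle I foresee is the precise identification of $\mathbb{L}_0(G)$ inside $\mathbb{L}(G)$: one must ensure that every non-degenerate pair $(N,[\mu])$ with $N\le Z$ really gives a Lagrangian subcategory braided equivalent to $\Rep(G)$, and not to some $\Rep(K,\omega)$ for a different quasisimple $K$. For the Steinberg groups appearing in the table, this verification rests on the known structure of their Schur multipliers and on the specific shape of their irreducible character tables, but it is tractable row by row.
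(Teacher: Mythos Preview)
Your overall strategy matches the paper's: both use the transitive action on $\mathbb{L}_0(G)$ with stabilizer $\im(\Ind_{\Vect_G})$, both observe that every normal abelian subgroup lies in $Z$, and both reduce to showing that $\cETildeL$ already acts transitively on $\mathbb{L}_0(G)$.

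The difference is in how transitivity is established. You propose to (a) enumerate $\mathbb{L}_0(G)$ row by row to obtain $|\mathbb{L}_0(G)|=n$, and then (b) show the orbit map $\cETildeL\to\mathbb{L}_0(G)$, $a\mapsto\LL_{\im(a),\mu_a}$ is injective, with the explicit worry that step (a) requires verifying for each non-degenerate $(N,[\mu])$ that $\LL_{N,\mu}\simeq\Rep(G)$ rather than some other $\Rep(K,\omega)$. The paper goes the other way: given a pair $(N,[\mu])$ with $N\le Z$ and $\mu$ non-degenerate, it explicitly writes down an $a\in E_{alt}$ (namely $a(\chi):=b^{-1}(\chi|_N)$ where $b:N\stackrel{\sim}{\to}\widehat{N}$ comes from $\mu$) with $a\cdot\LL_{1,1}=\LL_{N,\mu}$. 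This single construction simultaneously proves transitivity \emph{and} shows that every such $\LL_{N,\mu}$ lies in $\mathbb{L}_0(G)$ (being the image of $\LL_{1,1}$ under a braided autoequivalence), so the obstacle you anticipate simply evaporates. The only remaining point---that degenerate $(N,[\mu])$ are \emph{not} in $\mathbb{L}_0(G)$---is handled by counting $1$-dimensional simple objects: $\Rep(G)$ has exactly one since $\widehat{G}=\widehat{G_{ab}}=1$, whereas a degenerate $\mu$ produces at least one additional $\ocat_n^1$.

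So your argument would go through, but the paper's surjectivity-first construction is cleaner and avoids both the case analysis and the representation-theoretic verification you flagged as the main obstacle.
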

\begin{proof}
Let $N$ be a normal abelian subgroup of $G$. Then the image $\pi(N)$ of the surjection $\pi:G \to H$ is a normal abelian subgroup of $H$. Since $H$ is simple and non-abelian, $N$ has to be a subgroup of the center $\ker(\pi)=Z=Z(G)$. Further, since $\widehat{G} = \widehat{G_{ab}}=1$, the only $1$-dimensional simple object in $\LL_{1,1}$ is $\ocat^1_1$. On the other hand, if $[\mu] \in \H^2(N,k^\times)$ is degenerate on a non-trivial $N$, hence if there exists a $n \in N$ such that $\mu(n,\cdot)\mu(\cdot)^{-1}=1$, then $\LL_{N,\mu}$ has at least two (non-isomorphic) $1$-dimensional simple objects, namely $\ocat_1^1$ and $\ocat^1_n$. This implies, all $\LL_{N,\mu} \in \mathbb{L}_0(G)$ with $N \neq 1$ must have a non-degenerate $\mu$. \\
Recall that an element in $\im(\Ind_{\Rep(G)})$ determined by an $a \in \Hom_{alt}(\widehat{Z},Z)$ sends $\ocat^\chi_1$ to $\ocat^\chi_{a(\chi')}$ where $\chi':Z \to k^\times$ the $1$-dimensional character determined by $\chi$ restricted to $Z$. Given a pair $(N,[\mu])$ where $N$ is a normal central subgroup of $G$ and $\mu$ non-degenerate we give $a \in \Hom_{alt}(\widehat{Z},Z)$ such that $a(\LL_{1,1})=\LL_{N,\mu}$. Since $\mu$ is non-degenerate, $b:N \ito \widehat{N}$ defined by $b(n)(n')=\mu(n,n')\mu^{-1}(n',n)$ is bijective. We claim that $a \in \Hom_{alt}(\widehat{Z},Z)$ defined by $a(\chi) := b^{-1}(\chi|_N)$ does the job. Using that $N$ is a central normal subgroup, we see that $N = \im(a)$. Also, $\mu$ is indeed the cocycle determined by $a$ because: $\chi(n') = b(n,n') = \mu(n,n')\mu^{-1}(n',n)$ for $a(\chi)=n$ and all $n' \in N$. This proves that $a(\LL_{1,1})=\LL_{N,\mu}$. The action of $\Hom_{alt}(\widehat{Z},Z) \simeq \ZZ_n$ on $\mathbb{L}_0(G)$ is therefore indeed transitive. All elements of $\ZZ_n$ act differently on $\mathbb{L}_0(G)$ and therefore $\widetilde{\cE}_L \simeq \cE_L$. Also, in this case, the lazy elements $\cE_L \simeq \ZZ_n$ already give all $\im(\Ind_{\Rep(G)}) \simeq \ZZ_n$. The only non-lazy terms come from $\im(\Ind_{\Vect_G})=\H^2(G,k^\times)$ in the stabilizer.      

%
\end{proof}

\subsection{Symmetric group $\SS_3$}~\\

For $G=\SS_3$ the following holds
\begin{itemize}
\item $\cVTildeL=\Out(\SS_3)=1$
\item $\cBTildeL=\hat{\SS}_3\wedge\hat{\SS}_3=\ZZ_2\wedge\ZZ_2=1$
\item $\cETildeL=Z(\SS_3)\wedge Z(\SS_3)=1$
\item $\cRL=1$, as there are no direct factors.
\end{itemize}
Hence our result implies that there are no \emph{lazy} braided autoequivalences of $D\SS_3\md\mod$.

\noindent
We now discuss the full Brauer-Picard group of $\SS_3$ which
was computed in \cite{NR14} Sec. 8.1: We have the Lagrangian subcategories
$\LL_{1,1},\LL_{\langle(123)\rangle,1}$ and stabilizer $\Out(\SS_3)\ltimes
\H^2(\SS_3,k^\times)=1$. Hence $\Aut_{br}(D\SS_3\md\mod)=\ZZ_2$.

\begin{claim}
The decomposition we proposed in Question \ref{q_decomposition} is also true
for the full Brauer-Picard group of $\SS_3$. More precisely
\begin{align*}
\BrPic(\Rep(\SS_3))
&=\im(\Ind_{\Vect_G})\cdot \im(\Ind_{\Rep(G)})\cdot \cR = 1\cdot1\cdot \ZZ_2
\end{align*}
\begin{itemize}
\item $\im(\Ind_{\Vect_G})=1$
\item $\im(\Ind_{\Rep(G)})=1$
\item Reflections $\ZZ_2$, generated by the partial dualizations $r_N$
on the
semidirect decomposition $\SS_3=\ZZ_3\rtimes\ZZ_2$ with abelian normal
subgroup
$N=\ZZ_3$. More precisely $r$ interchanges
$\LL_{1,1},\LL_{\langle(123)\rangle,1}$, the action on $\ocat_1^\chi$ is made
explicit in the proof.
\end{itemize}
\end{claim}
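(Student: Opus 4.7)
The plan is to combine transitivity of the $\BrPic$-action on Lagrangian subcategories with an explicit computation of the partial dualization's action on simples. First, for $G=\SS_3$ we have $\Out(\SS_3)=1$ (every automorphism is inner) and the Schur multiplier $\H^2(\SS_3,k^\times)=1$, hence $\im(\Ind_{\Vect_G})\cong \Out(\SS_3)\ltimes\H^2(\SS_3,k^\times)=1$. By Prop. 7.6 of \cite{NR14} recalled above, this subgroup equals the stabilizer of $\LL_{1,1}$ under the transitive $\Aut_{br}(\DG\md\mod)$-action on $\mathbb{L}_0(G)$. Since $\mathbb{L}_0(\SS_3)=\{\LL_{1,1},\,\LL_{\langle(123)\rangle,1}\}$, we obtain $|\BrPic(\Rep(\SS_3))|=2$.

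To see that $\im(\Ind_{\Rep(G)})=1$, I would use Davydov's description \cite{Dav01}: $\Aut_{mon}(\Rep(\SS_3))$ is parametrized by pairs $(N,[\alpha])$ with $N$ normal abelian in $\SS_3$ and $[\alpha]$ an $\SS_3$-invariant class in $\H^2(N,k^\times)$. The only normal abelian subgroups are $1$ and $\ZZ_3$, both with trivial Schur multiplier, so only $(1,1)$ and $(\ZZ_3,1)$ occur, and one checks both pairs induce the trivial $\Rep(\SS_3)$-bimodule category in $\BrPic$. Alternatively, by the cardinality argument: since $|\BrPic(\Rep(\SS_3))|=2$ and the partial dualization below generates it but is visibly not of induced form (being a reflection rather than coming from an autoequivalence of $\Rep(\SS_3)$), the induction image must be trivial.

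The main computation is the action of $r=r_{\ZZ_2,\ZZ_3,\delta}$ on the three simples of $\LL_{1,1}$, namely $\ocat_1^{\mathbf{1}}$, $\ocat_1^{\sgn}$, and $\ocat_1^{\rho}$, where $\rho$ is the $2$-dimensional standard character. Using the formula $r(\ocat_1^\chi)=\ocat_{n_i}^{T_i}$ from Section \ref{sec_nonlazyReflection}: both $\mathbf{1}|_{\ZZ_3}$ and $\sgn|_{\ZZ_3}$ are the trivial character, yielding $n_1=1$, so $r$ fixes $\ocat_1^{\mathbf{1}}$ and $\ocat_1^{\sgn}$. For $\rho$, the character values $\rho(1)=2$, $\rho((123))=-1$ force $\rho|_{\ZZ_3}=\omega\oplus\omega^{-1}$ for a primitive cube root of unity $\omega$. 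The quotient $Q=\SS_3/\ZZ_3\cong\ZZ_2$ acts on $\widehat{\ZZ_3}$ by inversion (since $(12)(123)(12)=(123)^{-1}$), swapping the two summands, so the inertia $I_1$ is trivial and the multiplicity space $T_1$ is the trivial character on $\Cent((123))=\ZZ_3$. Hence $r(\ocat_1^\rho)=\ocat_{(123)}^{\mathbf{1}}$, which lies in $\LL_{\langle(123)\rangle,1}\setminus\LL_{1,1}$.

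Therefore $r$ does not stabilize $\LL_{1,1}$ and so is a non-trivial element of $\BrPic(\Rep(\SS_3))$. Since this group has order $2$, $r$ generates it and $\cR=\ZZ_2$; combined with $\im(\Ind_{\Vect_G})=\im(\Ind_{\Rep(G)})=1$ we obtain the decomposition $\BrPic(\Rep(\SS_3))=1\cdot 1\cdot \ZZ_2$. The subtlest point is showing $\im(\Ind_{\Rep(G)})=1$ rigorously rather than by cardinality; once the $\rho|_{\ZZ_3}$ computation above is in place, however, the rest of the argument is essentially arithmetic.
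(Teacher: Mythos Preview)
Your proof is essentially correct and follows the same line as the paper: compute the stabilizer, identify the Lagrangian subcategories, and use the formula from Section~\ref{sec_nonlazyReflection} to see that the partial dualization on $N=\ZZ_3$ sends $\ocat_1^{\rho}$ to $\ocat_{(123)}^{1}$ and hence swaps $\LL_{1,1}$ and $\LL_{\langle(123)\rangle,1}$.

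The one place your argument is looser than the paper's is the claim $\im(\Ind_{\Rep(G)})=1$. The paper invokes the sharper form of Davydov's parametrization: elements of $\Aut_{mon}(\Rep(G))$ correspond to pairs $(N,[\alpha])$ with $[\alpha]$ a \emph{non-degenerate} $G$-invariant class on $N$. Since the only nontrivial normal abelian subgroup $\ZZ_3$ is cyclic, it carries no non-degenerate class, and one is done. Your version lists $(1,1)$ and $(\ZZ_3,1)$ and asserts both ``induce the trivial bimodule category,'' which is not how the parametrization works; and your fallback cardinality argument (``$r$ is visibly not of induced form'') is circular, since nothing so far excludes $r$ from $\im(\Ind_{\Rep(G)})$ --- indeed for $\SS_4$ an element of $\im(\Ind_{\Rep(G)})$ \emph{does} move $\LL_{1,1}$. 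Once you add the non-degeneracy hypothesis, the argument is clean and matches the paper.
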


\begin{proof}
First, $\im(\Ind_{\Vect_G})$ is the stabilizer $\Out(\SS_3)\ltimes
\H^2(\SS_3,k^\times)=1$. Second, \cite{Dav01} states that
$\Aut_{mon}(\Rep(G))$ is a subset of the set of pairs consisting of a
abelian normal subgroup and a \emph{non-degenerate} $G$-invariant
cohomology class on this subgroup. The only nontrivial normal abelian
subgroup for $\SS_3$ is cyclic and hence there is no such pair, thus  
$\im(\Ind_{\Rep(\SS_3)})=1$.

We apply the general considerations in Section \ref{sec_nonlazyReflection}: The Clifford
decomposition of the restrictions triv$|_N$, sgn$|_N$, ref$|_N$ to $N=\ZZ_3$ is
$1,1,\zeta\oplus\zeta^2$ respectively. In the last case $\ZZ_2$ is acting by
interchanging the summands (resp. by Galois action), the inertia group being
trivial. We get $r(\ocat_1^{\mref})=\ocat_{(123)}^1$ and the partial dualization $r$ maps
$$\LL_{1,1}=\left\langle\ocat_1^{\mathrm{triv}},\;\ocat_1^{\mathrm{sgn}},\;\ocat_1^{\mathrm{ref}}\right\rangle
\longmapsto
\LL_{\langle(123)\rangle,1}=\left\langle
\ocat_1^{\mathrm{triv}},\;\ocat_1^{\mathrm{sgn}},\;\ocat_{(123)}^{1}\right\rangle$$
\end{proof}


\subsection{Symmetric group $\SS_4$}~\\

For $G=\SS_4$ the following holds:
\begin{itemize}
\item $\cVTildeL=\Out(\SS_4)=1$
\item $\cBTildeL=\hat{\SS}_4\wedge\hat{\SS}_4=\ZZ_2\wedge\ZZ_2=1$
\item $\cETildeL=Z(\SS_4)\wedge Z(\SS_4)=1$
\item $\cRL=1$, as there are no direct factors.
\end{itemize}

Hence, your result implies that there are no \emph{lazy} braided autoequivalences of $D\SS_4\md\mod$.\\

The full Brauer-Picard group of $\SS_4$ was computed in Sec. 8.2. \cite{NR14}. Denote the standard irreducible representations of $\SS_4$ by
triv, sgn, ref2, ref3, ref3$\otimes$sgn, where $\mref2$ and $\mref3$ are the standard two and three dimensional irreducible representations of $\SS_4$. There is a unique abelian normal
subgroup $N=\{1,(12)(34),(13)(24),(14)(23)\}\cong \ZZ_2\times \ZZ_2$. We have three Lagrangian subcategories
$\LL_{1,1},\LL_{N,1},\LL_{N,\mu}$ for
$N \cong \ZZ_2\times \ZZ_2$. The stabilizer is $\Out(\SS_4)\ltimes
\H^2(\SS_4,k^\times)=\ZZ_2$. In particular, $\Aut_{br}(D\SS_4\md\mod)$
has order $6$. One checks, that the nontrivial $[\beta] \in \H^2(\SS_4,k^\times)$
restricts to the nontrivial $[\mu]$ on $N$, hence
$$[\beta]:\; \LL_{1,1},\LL_{N,1},\LL_{N,\mu}
\longmapsto \LL_{1,1},\LL_{N,\mu},\LL_{N,1}$$
and by order and injectivity, we have $\Aut_{br}(D\SS_4\md\mod)\cong\SS_3$.

\begin{claim}
The decomposition we proposed in Question \ref{q_decomposition} is also true
for the full Brauer-Picard group of $\SS_4$. More precisely
\begin{align*}
\BrPic(\Rep(\SS_4))
&=\im(\Ind_{\Vect_G})\cdot \im(\Ind_{\Rep(G)})\cdot \cR\\
&=\ZZ_2\cdot \ZZ_2\cdot \ZZ_2 = \SS_3
\end{align*}
\begin{itemize}
\item $\im(\Ind_{\Vect_G})=\ZZ_2$ generated by the nontrivial cohomology
class
$[\beta]$ of $\SS_4$ with action on $\mathbb{L}_0(G)$ described above. Note
that
$[\beta]$ restricts to the unique nontrivial cohomology class $[\mu]$ on $N$.
\item $\im(\Ind_{\Rep(G)})=\ZZ_2$ generated by the non-lazy monoidal
autoequivalence $F$ of $\Rep(\SS_4)$, described in detail in in Sect. 8 of \cite{Dav01}. 
$E_F\in \Aut_{br}(\DG\md\mod)$ interchanges $\LL_{1,1},\LL_{\langle(123)\rangle,\mu}$.
\item Reflections $\cR\cong \ZZ_2$, generated by the reflection $r=r_N$ on the
semidirect decomposition $\SS_4=N\rtimes\SS_3$ with abelian kernel $N$.
More precisely $r$ interchanges $\LL_{1,1},\LL_{\langle(123)\rangle,1}$.
\end{itemize}

\end{claim}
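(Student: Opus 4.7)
The plan is to exploit the identification $\BrPic(\Rep(\SS_4))\cong\SS_3$ from \cite{NR14} together with its faithful transitive action on the three-element set $\mathbb{L}_0(\SS_4)=\{\LL_{1,1},\LL_{N,1},\LL_{N,\mu}\}$; since $\mathrm{Stab}(\LL_{1,1})=\im(\Ind_{\Vect_G})\cong\ZZ_2$, everything reduces to showing that the three proposed generators $\phi_V\in\im(\Ind_{\Vect_G})$, $\phi_F=E_F\in\im(\Ind_{\Rep(G)})$ and $\phi_r=r_N\in\cR$ act on $\mathbb{L}_0$ as three pairwise distinct nontrivial transpositions. Once this is verified, a direct enumeration of the eight products $\phi_V^{a}\phi_F^{b}\phi_r^{c}$ with $a,b,c\in\{0,1\}$ shows that each of the six elements of $\SS_3$ is realized (with multiplicities $1,1,1,1,2,2$), yielding the decomposition $\BrPic=\im(\Ind_{\Vect_G})\cdot\im(\Ind_{\Rep(G)})\cdot\cR$.

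The actions of $\phi_V$ and $\phi_r$ are essentially formal. For $\phi_V$, I would first observe that the unique nontrivial class $[\beta]\in\H^2(\SS_4,k^\times)\cong\ZZ_2$ restricts to the unique nontrivial class $[\mu]$ on $N\cong\ZZ_2^2$, which is classical (the Schur multiplier of $\SS_4$ is generated by the Klein-four extension underlying the binary octahedral group); by the formula for $\Ind_{\Vect_G}$ recorded in \cite{NR14}, $\phi_V$ then fixes $\LL_{1,1}$ and swaps $\LL_{N,1}\leftrightarrow\LL_{N,\mu}$. For $\phi_r$, I would apply the Clifford-theoretic formula derived in Section \ref{sec_nonlazyReflection} to each of the five irreducible $\SS_4$-characters: the three characters $\mathrm{triv}$, $\mathrm{sgn}$, $\mathrm{ref2}$ factor through $\SS_4/N\cong\SS_3$ and are hence fixed at $\ocat_1^\chi$, whereas $\mathrm{ref3}$ and $\mathrm{ref3}\otimes\mathrm{sgn}$ restrict to $N$ as the sum of the three nontrivial characters of $N$ (with $\SS_3$-inertia $\ZZ_2$) and are therefore sent to $\ocat_n^T$ with $n\in N\setminus\{1\}$. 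In particular $\phi_r(\LL_{1,1})=\LL_{N,1}$.

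The subtler step, and the main obstacle, is identifying the action of $\phi_F$. My approach would combine the necessary condition from Section \ref{sec_nonlazyInduction}, namely that $E_F(\ocat_1^\chi)=\ocat_g^{\chi'}$ forces $F(\chi)\cong\Ind^G_{\Cent(g)}(\chi')$, with the explicit monoidal autoequivalence $F$ of $\Rep(\SS_4)$ constructed in Sect.\,8 of \cite{Dav01} from the identification $\SS_4\cong\ZZ_2^2\rtimes\SS_3\cong\ZZ_2^2.\SS_3$ via the non-degenerate $\SS_3$-invariant cocycle $\mu$ on $N$. A character-table comparison then ought to yield $E_F(\LL_{1,1})=\LL_{N,\mu}$. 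Should the explicit description of $F$ not make this immediate, an indirect route is available: since $E_F$ is nontrivial, $E_F(\LL_{1,1})\in\{\LL_{N,1},\LL_{N,\mu}\}$, and the alternative $E_F(\LL_{1,1})=\LL_{N,1}$ would force $E_F\phi_r\in\im(\Ind_{\Vect_G})$, which a direct comparison of representatives in $\Aut_{br}(\DG\md\mod)$ -- exploiting that the reflection $r_N$ on the non-direct semidirect factor $N\subset\SS_4$ is non-lazy, while the class $[\beta]$ lifts to a lazy cocycle on $\DG^*$ -- can then be used to rule out, leaving $E_F(\LL_{1,1})=\LL_{N,\mu}$ as the only option.
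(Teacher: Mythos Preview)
Your primary strategy is the paper's own: compute the action of each of the three candidate involutions on $\mathbb{L}_0(\SS_4)$ via (i) the restriction of $[\beta]$ to $N$ for $\phi_V$, (ii) the Clifford-theoretic formula of Section~\ref{sec_nonlazyReflection} for $\phi_r$, and (iii) the necessary condition $F(\chi)\cong\Ind_{\Cent(g)}^G(\chi')$ of Section~\ref{sec_nonlazyInduction} together with the character table for $E_F$. This is exactly how the paper proceeds, and your framing in terms of three distinct transpositions of a three-element set is a clean way to package the same computation.

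There is, however, a genuine error in your proposed \emph{backup} argument for $E_F$. You claim that $[\beta]$ ``lifts to a lazy cocycle on $\DG^*$'' and want to use this to separate $E_F\phi_r$ from $\im(\Ind_{\Vect_G})$. But for $G=\SS_4$ the paper has just computed $\cBTildeL=\widehat{G}_{ab}\wedge\widehat{G}_{ab}=\ZZ_2\wedge\ZZ_2=1$, and more generally that $\Aut_{br,L}(D\SS_4\md\mod)=1$: every nontrivial braided autoequivalence here, including the one coming from $[\beta]$, is \emph{non}-lazy. (Concretely, $[\beta]$ restricts nontrivially to the abelian subgroup $N\cong\ZZ_2^2$, so it is not distinguished and cannot be represented by a cocycle with $\beta(g,h)=\beta(h,g)$ on commuting pairs.) Hence laziness cannot distinguish $E_F\phi_r$ from $\{1,[\beta]\}$, and your indirect route collapses.

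What the paper actually does to pin down $E_F(\LL_{1,1})=\LL_{N,\mu}$ rather than $\LL_{N,1}$ is the direct computation you allude to but do not carry out: having argued $g'=(12)(34)$, one solves $\mref3\otimes\sgn=\Ind_{\DD_4}^{\SS_4}(\chi)$ and $\mref3=\Ind_{\DD_4}^{\SS_4}(\chi')$ from the character table of $\Cent((12)(34))\cong\DD_4$, obtaining $\chi=(--)$ and $\chi'=(+-)$. These restrict \emph{nontrivially} to $N$, which by the defining condition of $\LL_{N,\mu}$ (namely $\chi(h)=\mu(g,h)\mu(h,g)^{-1}$ for $h\in N$) places $\ocat_{(12)(34)}^{(--)}$ and $\ocat_{(12)(34)}^{(+-)}$ in $\LL_{N,\mu}$ and not in $\LL_{N,1}$. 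You should replace the laziness fallback by this explicit check.
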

\begin{proof}
The stabilizer $\im(\Ind_{\Vect_G})$ and its action on $\mathbb{L}_0(G)$ has
already been calculated. To compute $\im(\Ind_{\Rep(\SS_4)})$ note that $\Aut_{mon}(\Rep(\SS_4))$ has
been explicitly computed in Sect. 8 of \cite{Dav01}: Since
there is only one ontrivial normal subgroup $N=\ZZ_2\times \ZZ_2$ and only one (up to coboundary)
non-degenerate $2$-cocycle $\mu$ on $N$, which is $G$-invariant
\emph{only}
as a cohomology class $[\mu]$. In \cite{Dav01} it is shown that this gives rise to a
(non-lazy) monoidal autoequivalence $F$ of $\Rep(\SS_4)$ such that $F(\mref3)=\mref3\otimes \sgn$ which corresponds to mapping $[(12)]$ to $[(1234)]$. 
This automorphism is visible as a symmetry of the character table. \\
We compute the action of $E_F\in \im(\Ind_{\Rep(\SS_4)})$ on all $\ocat_1^\chi$. First, $\chi=\triv,\sgn,\mref2$ restricted to $N$ are trivial
representations. Second, the possible images
$$E_F(\ocat_1^{\mref3})=\ocat_g^\chi,\quad
E_F(\ocat_1^{\mref3\otimes \sgn})=\ocat_{g'}^{\chi'}$$
belong to the $G$-conjugacy classes in $N$, i.e. $g,g'
=1$ or $g,g'
=(12)(34)$. They have to fulfill the characterization outlined in
general considerations above, namely:
$$F(\mref)=\mref\otimes \sgn\stackrel{!}{=}\Ind_{\Cent(g)}^G(\chi) \quad\quad F(\mref\otimes \sgn)=\mref\stackrel{!}{=}\Ind_{\Cent(g')}^G(\chi')$$
Assume $g=g'=1$. This implies that $E_F(\mathbb{L}_0(G))=\mathbb{L}_0(G)$ and thus $E_F$ is in the stabilizer,
which is $\Out(\SS_4)\ltimes \H^2(\SS_4,k^\times)$. This is not possible
since $E_F$ acts nontrivial on objects and does not come from an automorphism of $G$.
Therefore we take $g,g'
=(12)(34)$ and consider
$$F(\mref3)=\mref3\otimes \sgn\stackrel{!}{=}\Ind_{\Cent(12)(34)}^G(\chi) \quad\quad F(\mref3 \otimes \sgn)=\mref 3\stackrel{!}{=}\Ind_{\Cent(12)(34)}^G(\chi')$$
where $\Cent(12)(34)=\langle (12),(13)(24)\rangle\cong\DD_4$. The
character
table quickly returns the only possible $\chi,\chi'$: 
$$E_F(\ocat^{\mref3}_1)=\ocat_{(12)(34)}^{(--)}\qquad
E_F(\ocat^{\mref3 \otimes \sgn}_1)=\ocat_{(12)(34)}^{(+-)}\qquad$$
where $(++),(--),(+-),(--)$ are the four $1$-dimensional irreducible representations of $\DD_4 = \langle (12),(13)(24)\rangle$ where the first generator acts by the first $\pm 1$ in the bracket and the second generator by the second $\pm 1$. We see that $\chi|_N$ and $\chi'|_N$ are nontrivial, hence in
$\LL_{N,\mu}$ for
$\mu$ nontrivial and
$$E_F:\; \LL_{1,1}=\left\langle\ocat_1^{\triv},\ocat_1^{\sgn},\ocat_1^{\mref2},
\ocat_1^{\mref3},\ocat_1^{\mref3\otimes \sgn}\right\rangle$$
$$\longmapsto
\LL_{N,\mu}=\left\langle\ocat_1^{\triv},\ocat_1^{\sgn},\ocat_1^{\mref2},
\ocat_{(12)(34)}^{(--)},\ocat_{(12)(34)}^{(+-)}\right\rangle$$

We finally calculate the action of the partial dualization $r$ on the
decomposition $\SS_4=N \rtimes \SS_3$. The general considerations in Section
\ref{sec_nonlazyReflection} imply the following for the images
$r(\ocat_1^\chi)$: Since $\chi=\triv,\sgn,\mref2$ restricted to $N$ are trivial, these are fixed. For $\chi=\mref3,\chi'=\mref3\otimes
\sgn$ the restrictions are easily determined by the character table to be
$$\chi|_N=\chi'|_N=(-+)\oplus (+-)\oplus (--)$$
which returns via $\delta:kN\to k^N$ precisely the conjugacy class
$[(12)(34)]$ and the inertia subgroup is $I=N\rtimes
\langle(12)\rangle$. To see
the action on the centralizer, we restrict the representations
$\chi,\chi'$ to
$I$ and extend it trivially to $I=\Cent(12)(34)=\langle
(12),(13)(24)\rangle\cong\DD_4$ yielding finally:
$$r(\ocat_1^{ref})=\ocat_{(12)(34)}^{(++)}
\qquad r(\ocat_1^{\mref\otimes \sgn})=\ocat_{(12)(34)}^{(-+)}$$ 
$$r:\; \LL_{1,1}=\left\langle\ocat_1^{\triv},\ocat_1^{\sgn},\ocat_1^{\mref2},
\ocat_1^{\mref3},\ocat_1^{\mref3\otimes \sgn},\right\rangle$$
$$\longmapsto
\LL_{N,1}=\left\langle\ocat_1^{\triv},\ocat_1^{\sgn},\ocat_1^{\mref2},
\ocat_{(12)(34)}^{(++)},\ocat_{(12)(34)}^{(-+)},\right\rangle$$

%

\end{proof}

\noindent{\sc Acknowledgments}: 
We are grateful to C. Schweigert for many helpful
discussions. The authors are partially supported by the DFG Priority 
Program SPP 1388 ``Representation Theory'' and the Research Training Group 1670 ``Mathematics Inspired by String Theory and QFT''. 
S.L. is currently on a research stay supported by DAAD PRIME, funded by BMBF and EU Marie Curie Action.

\end{document}